\newtheorem{theorem}[equation]{Theorem}
\newtheorem{theorem-definition}[equation]{Theorem-Definition}
\newtheorem{lemma-definition}[equation]{Lemma-Definition}
\newtheorem{definition-prop}[equation]{Proposition-Definition}
\newtheorem{prop}[equation]{Proposition}
\newtheorem{lemma}[equation]{Lemma}
\newtheorem{cor}[equation]{Corollary}
\newtheorem{definition}[equation]{Definition}
\newcommand{\llbracket}{[\negthinspace[}
\newcommand{\rrbracket}{]\negthinspace]}
\newcommand{\llpar}{(\negthinspace(}
\newcommand{\rrpar}{)\negthinspace)}
\theoremstyle{definition}
\newtheorem{example}[equation]{Example}
\newtheorem{remark}[equation]{Remark}
\newcommand{\N}{\ensuremath{\mathbb{Z}_{\geq 0}}}
\newcommand{\Z}{\ensuremath{\mathbb{Z}}}
\newcommand{\Q}{\ensuremath{\mathbb{Q}}}
\newcommand{\R}{\ensuremath{\mathbb{R}}}
\newcommand{\C}{\ensuremath{\mathbb{C}}}
\newcommand{\A}{\ensuremath{\mathbb{A}}}
\newcommand{\cX}{\ensuremath{\mathscr{X}}}
\newcommand{\mX}{\ensuremath{\mathfrak{X}}}
\newcommand{\cU}{\ensuremath{\mathscr{U}}}
\newcommand{\cY}{\ensuremath{\mathscr{Y}}}
\newcommand{\cZ}{\ensuremath{\mathscr{Z}}}
\renewcommand{\R}{\ensuremath{\mathbb{R}}}
\renewcommand{\C}{\ensuremath{\mathbb{C}}}
\renewcommand{\A}{\ensuremath{\mathbb{A}}}
\renewcommand{\cU}{\ensuremath{\mathscr{U}}}
\renewcommand{\cZ}{\ensuremath{\mathscr{Z}}}
\renewcommand{\cY}{\ensuremath{\mathscr{Y}}}
\newcommand{\Spec}{\ensuremath{\mathrm{Spec}\,}}
\newcommand{\Spf}{\ensuremath{\mathrm{Spf}\,}}
\newcommand{\Proj}{\ensuremath{\mathbb{P}}}
\newcommand{\red}{\mathrm{red}}
\newcommand{\an}{\mathrm{an}}
\newcommand{\weight}{\mathrm{wt}}
\newcommand{\lct}{\mathrm{lct}}
\newcommand{\Sk}{\mathrm{Sk}}
\numberwithin{equation}{subsection}
\newcommand{\sss}{\vspace{5pt} \subsubsection*{ }\refstepcounter{equation}{{\bfseries(\theequation)}\ }}
\begin{document}
\title[Weight functions on non-archimedean spaces]{Weight functions on non-archimedean analytic spaces and the Kontsevich-Soibelman skeleton}


\thanks{2010\,\emph{Mathematics Subject Classification}.
 Primary 14G22; secondary 13A18, 14F17.
 \newline
 The first-named author was partially supported by
 NSF grant DMS-1068190 and
  a Packard Fellowship. The second-named author was partially supported by the Fund for Scientific Research - Flanders (G.0415.10).}
\keywords{Non-archimedean spaces, Berkovich skeleton,
connectedness theorem}

\author{Mircea Musta\c{t}\u{a}}
\address{University of Michigan\\ Department of Mathematics\\ Ann Arbor, MI48109\\USA}
\email{mmustata@umich.edu}

\author[Johannes Nicaise]{Johannes Nicaise}
\address{KU Leuven\\
Department of Mathematics\\
Celestijnenlaan 200B\\3001 Heverlee \\
Belgium}
\email{johannes.nicaise@wis.kuleuven.be}

\begin{abstract}
We associate a weight function to pairs $(X,\omega)$ consisting of
a smooth and proper variety $X$ over a complete discretely valued
field and a pluricanonical form $\omega$ on $X$. This weight
function is a real-valued function on the non-archimedean
analytification of $X$. It is piecewise affine on the skeleton of
any regular model with strict normal crossings of $X$, and
strictly ascending as one moves away from the skeleton. We apply
these properties to the study of the Kontsevich-Soibelman skeleton
of $(X,\omega)$, and we prove that this skeleton is connected when
$X$ has geometric genus one and $\omega$ is a canonical form on
$X$. This result can be viewed as an analog of the
Shokurov-Koll\'ar connectedness theorem in birational geometry.
\end{abstract}

\maketitle

\section{Introduction}
\subsection{Skeleta of non-archimedean spaces}
 An important property of Berkovich spaces over non-archimedean
 fields is that, in many geometric situations, they can be
 contracted onto some subspace with piecewise affine structure, a
 so-called skeleton. These skeleta are
 usually constructed by choosing appropriate formal models for
 the space, associating a simplicial complex to the reduction of
 the formal model and embedding its geometric realization into the
 Berkovich space.
  For instance, if $C$ is a smooth projective
 curve of genus $\geq 2$ over an algebraically closed non-archimedean field, then it
 has a canonical skeleton, which is homeomorphic to the dual graph
 of the special fiber of its stable model \cite[\S4]{berk-book}. Likewise, if $(X,x)$ is
 a normal surface singularity over a perfect field $k$,
 then one can endow $k$ with its trivial absolute value and construct a $k$-analytic punctured tubular neighbourhood of $x$ in $X$ by removing $x$ from
 the generic fiber of the formal $k$-scheme $\Spf
 \widehat{\mathcal{O}}_{X,x}$. This $k$-analytic space contains a canonical skeleton,
 homeomorphic to the product of $\R_{>0}$ with the dual graph of the exceptional divisor of the
 minimal log-resolution of $(X,x)$ \cite{thuillier}.

  In higher dimensions, one can
  still associate skeleta to models with normal crossings (or to so-called pluristable models as in \cite{berk-contract}).
  However, it is no longer clear how to construct a canonical
  skeleton, since we usually cannot find a canonical model
  with normal crossings. The aim of this paper is to show how one can identify certain
  essential pieces that must appear in every skeleton by means of
  weight functions associated to pluricanonical forms.

\subsection{The work of Kontsevich and Soibelman}
  One of the starting points of this work was the following construction by
  Kontsevich and Soibelman \cite{KS}. Let $X$ be a smooth
  projective family of varieties over a punctured disc around the origin of the complex
  plane, and let $\omega$ be a relative differential form of
  maximal degree on $X$. Let $t$ be a local coordinate on $\C$ at $0$. Kontsevich and Soibelman defined a
  skeleton in the analytification of $X$ over $\C\llpar t\rrpar$ by taking
  the closure of the set of divisorial valuations that satisfy a
  certain minimality property with respect to the differential
  form $\omega$.
  Their goal was to find a non-archimedean
  interpretation of Mirror Symmetry. They
  studied in detail the maximally unipotent semi-stable
  degenerations of $K3$-surfaces, in which case the skeleton is
  homeomorphic to a two-dimensional sphere, and described how a
  degeneration can be reconstructed from the skeleton, equipped
  with a certain affine structure. This idea was further
  developed by Gross and Siebert in their theory of toric
  degenerations, using tropical and logarithmic geometry instead
  of non-archimedean geometry; see for instance the survey paper
  \cite{gross}.

\subsection{The weight function}
 We generalize the definition of the Kontsevich-Soibelman skeleton to
 smooth varieties $X$ over a complete discretely valued
 field $K$, endowed with a non-zero pluricanonical form
 $\omega$. We define the weight of $\omega$ at a divisorial point
 of $X^{\an}$ (that is, a point corresponding to a divisorial
 valuation on the function field of $X$) and define the skeleton
 $\Sk(X,\omega)$ of the pair $(X,\omega)$ as the closure of the set of divisorial
 points with minimal weight. A precise definition is given in
 \eqref{sss-KSsk} and it is compared to the one of
 Kontsevich and Soibelman in \eqref{sss-compar}.
  Next, we show how the weight function
 can be extended to the Berkovich skeleton associated to any
 regular model of $X$ whose special fiber has strict normal
 crossings, and even to the entire space $X^{\an}$ if $K$ has
 residue characteristic zero or $X$ is a curve. A remarkable
 property of this weight function is that it is piecewise affine on the Berkovich
 skeleton $\Sk(\cX)$ associated to any proper regular model $\cX$ with strict normal
 crossings, and
 strictly
 descending under the retraction from $X^{\an}$ to $\Sk(\cX)$ (Proposition \ref{prop-weightext}).
 We use this property to show that $\Sk(X,\omega)$ is the union of the faces of the Berkovich skeleton of $\cX$
 on which the weight function is constant and of minimal value (Theorem \ref{thm-KS}).
This generalizes Theorem 3 in \cite[\S6.6]{KS}; whereas the proof
in that paper relied on the Weak
 Factorization theorem, we only use elementary computations on
 divisorial valuations and approximation of arbitrary points on
 $X^{\an}$ by divisorial points.

\subsection{The connectedness theorem}
  Besides the construction of the weight function, the other main
  result of this paper is a connectedness theorem for the skeleton $\Sk(X,\omega)$ of a smooth and
  proper $K$-variety $X$ of geometric genus one (for instance, a Calabi-Yau variety)
  endowed with a
  non-zero differential form of maximal degree $\omega$. This skeleton does not depend on $\omega$, since
  $\Sk(X,\omega)$ is invariant under
  multiplication of $\omega$ by a non-zero scalar. We show that
  $\Sk(X,\omega)$ is always connected if $K$ has residue characteristic
  zero (Theorem \ref{thm-connected}). Our proof is based on a variant of Koll\'ar's
  torsion-free
  theorem for schemes over power series rings (Theorem \ref{thm-torfree}), which we deduce
  from the torsion-free theorem for complex varieties by means of Greenberg
  approximation.

\subsection{The relation with birational geometry}
   All these constructions and results have natural analogs in
   the birational geometry of complex varieties, replacing the pair
   $(X,\omega)$ by a smooth complex variety $Y$ equipped with a
   coherent ideal sheaf $\mathcal{I}$ and regular models with normal crossings
   by log resolutions. In particular, one can define in a very
   similar way a weight function on the non-archimedean punctured
   tubular neighbourhood of the zero locus of $\mathcal{I}$ in
   $X$; this is explained in Section \ref{sec-birat}, together with the close
    relation with the constructions in \cite{BFJ0} and \cite{jonsson-mustata}.
    If $h\colon Y'\to Y$ is a log resolution of $\mathcal{I}$ and $E$ is an irreducible component of
    the zero locus $Z(\mathcal{I}\mathcal{O}_{Y'})$ of $\mathcal{I}\mathcal{O}_{Y'}$, then the
   value of the weight function at a divisorial point associated to $E$ is equal
   to $\mu/N$, where $\mu-1$ is the multiplicity of $E$ in
   the relative canonical divisor $K_{Y'/Y}$, and $N$ is the
   multiplicity of $E$ in $Z(\mathcal{I}\mathcal{O}_{Y'})$. This is a
   classical invariant in birational geometry, closely related to the
   log discrepancy of $(X,\mathcal{I})$ at $E$, and its infimum
   over all possible log resolutions $h$ and divisors $E$ is the
   log canonical threshold of $(X,\mathcal{I})$.
   The
   counterpart of the Kontsevich-Soibelman skeleton coincides with the dual
   complex of the union of irreducible components of  $Z(\mathcal{I}\mathcal{O}_{Y'})$
   that compute the log canonical threshold, and our connectedness
   theorem translates into the connectedness theorem of Shokurov and
   Koll\'ar; see Section \ref{subsec-KSsk-birat}. Of course, the latter result was our main source of inspiration for the
   proof of the connectedness theorem for smooth and proper
   $K$-varieties of geometric genus one.

\subsection{Further questions}
 As we have seen, the skeleton of a pair $(X,\omega)$ as above is
 contained in the Berkovich skeleton of every regular proper model with
 strict normal crossings. We define the essential skeleton of $X$
 as the union of the skeleta $\Sk(X,\omega)$ as $\omega$
 runs through the set of non-zero pluricanonical forms on $X$ (Section \ref{ss-essential}).
 It would be quite interesting to know if
 one can define a suitable class of models of $X$ whose skeleta coincide with the essential skeleton (assuming that the Kodaira dimension
 of $X$ is non-negative). We are investigating this question in an ongoing
 project.

\subsection{Structure of the paper}
 To conclude this introduction, we give a brief survey of the
 structure of the paper. In Section \ref{sec-prelim} we introduce
 divisorial and monomial points on analytic spaces and we prove
 some basic properties. In Section \ref{sec-Berk} we explain the
 construction of the Berkovich skeleton associated to a regular
 model with strict normal crossings and we define its piecewise affine structure.
 This is a fairly straightforward generalization of the
 construction by Berkovich for pluristable formal schemes, but
 since the non-semistable case is not covered by the existing
 literature, we include some details here. The main new result is
 Proposition \ref{prop-domin}, which says that every proper morphism of models gives
  rise to an inclusion of skeleta. The core of the paper is
  Section \ref{sec-weight}, where we construct the weight
  function and prove its fundamental properties; see in particular Proposition
  \ref{prop-weightext}. The weight function is constructed in several steps, first defining it on divisorial and monomial points and then extending it to the entire
  analytic space by approximation. The applications to the
  Kontsevich-Soibelman skeleton are discussed in Section
  \ref{subsec-KS}. In Section \ref{sec-connected} we deduce a variant of Koll\'ar's
  torsion-free theorem for schemes over formal power series and use it to
  prove our
  connectedness theorem for skeleta of varieties of geometric
  genus one. Finally, in Section \ref{sec-birat}, we sketch the
  analogs of these results in the setting of complex birational geometry.

\subsection{Acknowledgements} We are grateful
to Olivier Wittenberg for helpful discussions concerning Section
\ref{subsec-Greenberg}, to Mattias Jonsson for pointing out a
 mistake in an earlier version of the paper and to Jenia Tevelev for pointing out the importance of
 working with pluricanonical forms instead of only canonical forms. Part of this research
has been done during the first-named author's visit to Leuven. He
is grateful to KU Leuven for making this visit possible.

\subsection{Notation}
\sss  We denote by $R$ a complete discrete valuation ring
with residue field $k$ and quotient field $K$.
 We denote by  $\frak{m}$ the maximal ideal in $R$ and by $v_K$ the valuation
$$K^{\times}\to \Z,$$ normalized in such a way that $v_K$ is surjective.
We define an absolute value $|\cdot|_K$ on $K$ by setting
$$|x|_K=\exp(-v_K(x))$$ for all $x$ in $K^{\times}$. This absolute
value turns $K$ into a complete non-archimedean field.

\sss We will  consider the special fiber functor
$$(\cdot)_k\colon (R-\mathrm{Sch})\to (k-\mathrm{Sch}),\, \cX\mapsto
\cX_k=\cX\times_R k$$ 
from the category of $R$-schemes to the category of $k$-schemes,
as well as the generic fiber functor
$$(\cdot)_k\colon (R-\mathrm{Sch})\to (K-\mathrm{Sch}),\, \cX\mapsto
\cX_K=\cX\times_R K$$ from the category of $R$-schemes to the
category of $K$-schemes.

\section{Monomial points on $K$-analytic spaces}\label{sec-prelim}
\subsection{Birational points}
\sss Let $X$ be an integral separated $K$-scheme of finite type.
Its analytification $X^{\an}$ is endowed with a canonical morphism
of locally ringed spaces
$$i\colon X^{\an}\to X.$$ For every point $x$ of $X^{\an}$, we denote by
$K(x)$ the residue field of $X$ at $i(x)$. This field carries a
natural valuation, and the residue field $\mathscr{H}(x)$ of
$X^{\an}$ at $x$ is the completion of $K(x)$ with respect to this
valuation.
 The dimension of the $\Q$-vector space
$$|\mathscr{H}(x)^{\times}|\otimes_{\Z} \Q$$ will be called the rational rank of $X$ at
$x$.


\sss We call a point of $X^{\an}$ birational if its image in $X$
is the generic point of $X$. We will denote the set of birational
points of $X^{\an}$ by $\mathrm{Bir}(X)$. It is clear that every
birational morphism of integral $K$-varieties $Y\to X$ induces a
bijection $\mathrm{Bir}(Y)\to \mathrm{Bir}(X)$.  For every
birational point $x$ on $X^{\an}$ we can define a real valuation
$v_x$ on the function field $K(X)$ of $X$ by
$$v_x\colon K(X)^{\times}\to \R,\,f\mapsto -\ln |f(x)|.$$
The map $x\mapsto v_x$ is a bijection between $\mathrm{Bir}(X)$
and the set of real valuations $K(X)^{\times}\to \R$ that extend
the valuation $v_K$ on $K$.

\subsection{Models}
\sss \label{sss-model}
 Let $X$ be a normal integral separated $K$-scheme of finite type. An $R$-model for $X$
is a normal flat separated $R$-scheme of finite type $\cX$ endowed
with an isomorphism of $K$-schemes $\cX_K\to X$. Note that we do
not impose any properness conditions on $X$ or $\cX$. If $\cX$ and
$\cY$ are $R$-models of $X$, then a morphism of $R$-schemes
$h\colon \cX\to \cY$ is called a morphism of $R$-models of $X$ if
$h_K\colon \cY_K\to \cX_K$ is an isomorphism that commutes with the
isomorphisms $\cX_K\to X$ and $\cY_K\to X$. Thus there exists at
most one morphism of $R$-models $\cY\to \cX$. If it exists, then
we say that $\cY$ dominates $\cX$.

\sss For every $R$-scheme $\cX$, we denote by $\widehat{\cX}$ its
$\frak{m}$-adic formal completion. If $\cX$ is an $R$-model of
$X$, then $\widehat{\cX}$ is a  flat separated formal $R$-scheme
of finite type, and we can consider its generic fiber
$\widehat{\cX}_\eta$ in the category of $K$-analytic spaces. This
is a compact analytic domain in the analytification $X^{\an}$ of
$X$. A point $x$ of $X^{\an}$ belongs to $\widehat{\cX}_\eta$ if
and only if the morphism $$\Spec \mathscr{H}(x) \to X$$ extends to
a morphism
$$\Spec \mathscr{H}(x)^o \to \cX,$$ where $\mathscr{H}(x)^o$
denotes the valuation ring of the valued field $\mathscr{H}(x)$.
Thus $X^{\an}=\widehat{\cX}_\eta$ if $\cX$ is proper over $R$. The
generic fiber $\widehat{\cX}_\eta$ is endowed with an
anti-continuous reduction map
$$\red_{\cX}\colon \widehat{\cX}_\eta\to \cX_k$$ that sends a point $x$
to the image of the closed point of $\Spec \mathscr{H}(x)^o$ under
the morphism $$\Spec \mathscr{H}(x)^o \to \cX.$$ In particular, a
birational point $x$ of $X^{\an}$ belongs to
$\widehat{\mathscr{X}}_\eta$ if and only if the valuation $v_x$
has a center on $\cX$, and in that case, the reduction map
$\red_{\cX}$ sends $x$ to the center of $v_x$.

\sss Assume that $\cX$ is a regular integral separated $R$-scheme
of finite type. Let $x$ be a point in $\widehat{\mathscr{X}}_\eta$
and
 let $D$ be a divisor on $\cX$ whose support does not
 contain $i(x)$. Then we set
  $$v_x(D)=-\ln|f(x)|$$ where $f$ is any element of
  $K(X)^{\times}$ such that, locally at $\red_{\cX}(x)$, we have
  $D=\mathrm{div}(f)$.
 It is obvious that the function $v_x(\cdot)$ is $\Z$-linear in $D$ and that is behaves well with respect to
pull-backs: if $h\colon \cX'\to \cX$ is a proper morphism of regular
$R$-models of $X$, then $v_x(D)=v_x(h^*D)$ for every divisor $D$
on $\cX$.

\begin{prop}\label{prop-cont} Assume that $\cX$ is a regular integral separated $R$-scheme of finite
type. Then for every divisor $D$ on $\cX$, the function
$$\{x\in \widehat{\cX}_\eta\,|\,i(x)\notin \mathrm{Supp}(D)\} \to \R,\,x\mapsto v_x(D)$$ is
continuous.
\end{prop}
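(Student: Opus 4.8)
The plan is to prove continuity locally on the domain
$$\mathcal{D}=\{x\in \widehat{\cX}_\eta\,|\,i(x)\notin \mathrm{Supp}(D)\},$$
reducing to a situation where a single regular function computes $v_x(D)$, and to sidestep the anti-continuity of the reduction map by working with a \emph{finite closed} cover of $\mathcal{D}$ rather than an open one. First I would reduce to the case where $D$ is effective: writing $D=D^+-D^-$ as a difference of effective divisors and using that $v_x(\cdot)$ is $\Z$-linear in $D$, one has $v_x(D)=v_x(D^+)-v_x(D^-)$ for every $x\in\mathcal{D}$, and since $\mathrm{Supp}(D^{\pm})\subseteq \mathrm{Supp}(D)$ the domain $\mathcal{D}$ is contained in the domains attached to $D^+$ and to $D^-$. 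Hence it suffices to treat effective divisors, for which local equations are genuine regular functions.

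Since $\cX$ is regular it is locally factorial, so $D$ is locally principal; as $\cX$ is of finite type over $R$ it is quasi-compact, so I may choose a \emph{finite} cover of $\cX$ by affine opens $\cX_1,\dots,\cX_n$ with $D=\mathrm{div}(f_i)$ on $\cX_i$ for a regular function $f_i\in\mathcal{O}(\cX_i)$. The geometric crux is that each
$$\widehat{\cX_i}_\eta=\red_{\cX}^{-1}\big((\cX_i)_k\big)$$
is a \emph{closed} subset of $\widehat{\cX}_\eta$: a point $x$ has its center on $\cX_i$ precisely when $\red_{\cX}(x)\in(\cX_i)_k$, and $(\cX_i)_k$ is open in $\cX_k$, so its preimage is closed because $\red_{\cX}$ is anti-continuous. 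These sets cover $\widehat{\cX}_\eta$, since every point reduces into some $\cX_i$. Therefore the sets $C_i=\mathcal{D}\cap \widehat{\cX_i}_\eta$ form a finite closed cover of $\mathcal{D}$.

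On each $C_i$ I would evaluate $v_x(D)$ using $f_i$. As $f_i$ is a local equation of $D$ at $\red_{\cX}(x)$, and any two local equations differ by a unit of the local ring $\mathcal{O}_{\cX,\red_{\cX}(x)}$ — whose image in the valuation ring $\mathscr{H}(x)^o$ is again a unit, hence of absolute value $1$ — the value $v_x(D)=-\ln|f_i(x)|$ is independent of the chosen equation and valid for all $x\in C_i$. Because $f_i$ is a regular function on the affine formal scheme $\widehat{\cX_i}$, the map $x\mapsto|f_i(x)|$ is continuous on $\widehat{\cX_i}_\eta$, and it is strictly positive on $C_i$ since $i(x)\notin\mathrm{Supp}(D)$ forces $f_i$ not to vanish at $i(x)$; hence $x\mapsto-\ln|f_i(x)|$ is continuous on $C_i$. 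Since a function that is continuous on each member of a finite closed cover is continuous, it follows that $v_\cdot(D)$ is continuous on $\mathcal{D}$.

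The main obstacle I anticipate is exactly the anti-continuity of the reduction map: one cannot cover $\widehat{\cX}_\eta$ by \emph{open} loci on which $D$ is principal, because the natural pieces $\widehat{\cX_i}_\eta$ are preimages of opens under the anti-continuous map $\red_{\cX}$ and are therefore closed (removing a horizontal divisor, for instance, produces a closed tube rather than an open one). The finite-closed-cover argument, which is available precisely because $\cX$ is quasi-compact, is what turns the purely local computation into a global statement; the only other point demanding care is the well-definedness of $v_x(D)$, which rests on the fact that units of the local ring at $\red_{\cX}(x)$ have absolute value $1$ when evaluated at $x$.
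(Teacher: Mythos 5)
Your proof is correct and follows essentially the same route as the paper: both exploit the anti-continuity of $\red_{\cX}$ to cover the domain by the closed sets $\red_{\cX}^{-1}\bigl((\cX_i)_k\bigr)=\widehat{(\cX_i)}_\eta$ on which $D$ is principal, and then observe that $v_x(D)=-\ln|f_i(x)|$ is continuous on each such piece. The only cosmetic difference is your preliminary reduction to effective divisors so that local equations are regular functions, whereas the paper works directly with rational local equations; the closed-cover gluing argument is identical.
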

\begin{proof}
 For every open subscheme $\cY$ of $\cX$, the
 space $\widehat{\cY}_\eta$ is a closed analytic domain of
 $\widehat{\cX}_\eta$ by anti-continuity of the reduction map
 $$\red_{\cX}\colon \widehat{\cX}_\eta\to \cX_k$$
and the fact that $\widehat{\cY}_\eta=\red_{\cX}^{-1}(\cY_k)$
\cite[\S1]{berk-vanish}.
 Thus we may assume that $D=\mathrm{div}(f)$ for some rational
 function $f$ on $\cX$. Then
 $$v_x(D)=-\ln |f(x)|$$ is a continuous function
 in $x$.
\end{proof}

\sss \label{sss-sncd} Let $X$ be a connected regular separated
$K$-scheme of finite type. An $sncd$-model for $X$ is a regular
$R$-model $\cX$ such that $\cX_k$ is a divisor with strict normal
crossings. Again, we do not impose any properness conditions on
$X$ or $\cX$; for instance, according to our definition, $X$ is an
$sncd$-model of itself. If $X$ is proper and either $k$ has
characteristic zero or $X$ is a curve, then every proper $R$-model
of $X$ can be dominated by a proper $sncd$-model, by Hironaka's
resolution of singularities. If $k$ has positive characteristic
and $X$ has dimension at least $2$, the existence of a proper
$sncd$-model is not known.

\subsection{The Zariski Riemann space}\label{subsec-ZR}
\sss Let $X$ be a normal integral proper $K$-scheme. We denote by
$\mathcal{M}_X$ the category of proper $R$-models $\cX$ of $X$,
where the morphisms are morphisms of $R$-models. We can dominate
any pair of proper $R$-models $\cX$, $\cX'$ by a common proper
$R$-model of $X$, and we have already observed that there exists
at most one morphism of $R$-models $\cX'\to \cX$. These properties
imply that the category $\mathcal{M}_X$ is cofiltered.
 The Zariski Riemann space of $X$ is defined as the limit
 $$X^{\mathrm{ZR}}=\lim_{\stackrel{\longleftarrow}{\cX\in \mathcal{M}_X}}\cX_k$$ in the category of locally ringed spaces.

\begin{prop}\label{prop-ZR} The map $j\colon X^{\an}\to X^{\mathrm{ZR}}$ induced by the
reduction maps $\red_{\cX}\colon X^{\an}\to \cX_k$ is injective. It has
a continuous retraction $r\colon X^{\mathrm{ZR}}\to X^{\an}$ such that
the topology on $X^{\an}$ is the quotient topology with respect to
 $r$ and such that, for every point $x$ in $X^{\mathrm{ZR}}$ and every proper $R$-model $\cX$ of $X$, the image of $x$ under the natural
  projection $p_{\cX}\colon X^{\mathrm{ZR}}\to\cX_k$ lies in the closure of $\{\red_{\cX}\circ
  r(x)\}$.
\end{prop}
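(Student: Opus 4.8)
The plan is to analyze the three assertions of Proposition~\ref{prop-ZR} separately, building everything out of the reduction maps $\red_{\cX}$ and the anti-continuity property recorded in \eqref{sss-model}. First I would prove injectivity of $j$. Two points $x,y$ of $X^{\an}$ with $j(x)=j(y)$ have the same reduction $\red_{\cX}(x)=\red_{\cX}(y)$ on $\cX_k$ for every proper $R$-model $\cX$. The idea is that the residue field $\mathscr{H}(x)$ and its valuation ring $\mathscr{H}(x)^o$ can be reconstructed from this data: the collection of centers on all models determines, for each rational function $f$ on $X$ regular and nonvanishing at the relevant centers, whether $|f(x)|\leq 1$, $<1$, or $=1$. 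Since one can always find a proper model $\cX$ on which a given $f\in K(X)^\times$ (or the difference of two such functions) becomes a well-behaved Cartier divisor, the compatible system of centers pins down the valuation on $K(x)$, hence pins down $x$. This is essentially the statement that $X^{\mathrm{ZR}}$ separates analytic points, and it is the cleanest of the three parts.

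Next I would construct the retraction $r$. For a point $\xi\in X^{\mathrm{ZR}}$, given by a compatible family $(\xi_{\cX})_{\cX}$ with $\xi_{\cX}\in\cX_k$, the natural candidate is to send $\xi$ to the analytic point obtained as a suitable limit of the generic points of the closures $\overline{\{\xi_{\cX}\}}$. Concretely, I would use that each $\xi_{\cX}$ determines a valuation (or a point of $\widehat{\cX}_\eta$) by taking the divisorial or, more generally, the generic point of the stratum it lies in, and then show these assemble into a single point of $X^{\an}=\varprojlim \widehat{\cX}_\eta$ (using properness so that $X^{\an}=\widehat{\cX}_\eta$ for every proper $\cX$). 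One checks $r\circ j=\mathrm{id}$ directly from the definition of the reduction map. Continuity of $r$ should follow from anti-continuity of each $\red_{\cX}$ together with the universal property of the inverse limit topology on $X^{\mathrm{ZR}}$: the preimage under $r$ of a basic open set in $X^{\an}$, which is carved out by finitely many inequalities $|f(x)|<c$, pulls back to a set defined by conditions on finitely many $\cX_k$, and anti-continuity turns the relevant reduction conditions into open conditions on the Zariski Riemann space.

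For the quotient-topology claim I would argue in two directions. Since $r$ is continuous and $r\circ j=\mathrm{id}$, a subset $U\subseteq X^{\an}$ is open whenever $r^{-1}(U)$ is open; conversely continuity of $r$ gives the reverse implication, so the topology on $X^{\an}$ is exactly the quotient topology induced by $r$. The final compatibility assertion---that $p_{\cX}(\xi)$ lies in the closure of $\{\red_{\cX}(r(\xi))\}$---is where the geometry is genuinely used, and I expect this to be the main obstacle. The point $r(\xi)$ is, roughly, the generic analytic point of the stratum through $\xi_{\cX}$, so its reduction $\red_{\cX}(r(\xi))$ is a more generic point of $\cX_k$ than $p_{\cX}(\xi)=\xi_{\cX}$, and one must show $\xi_{\cX}$ specializes from it. The difficulty is that $r(\xi)$ is defined as an inverse limit over all models, so controlling its reduction on a single $\cX$ requires comparing centers across the cofiltered system and invoking that, after passing to a dominating model, the center of the valuation associated to $r(\xi)$ maps onto a point whose closure contains $\xi_{\cX}$. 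I would handle this by fixing $\cX$, choosing a model $\cX'$ dominating $\cX$ that is adapted to $\xi$ (for instance so that the stratum of $\xi_{\cX'}$ has the generic point we want), and chasing the specialization relation through the proper morphism $\cX'\to\cX$, using that proper morphisms send closures of points to closures of their images.
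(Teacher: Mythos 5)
Your proposal attempts a from-scratch construction, whereas the paper's proof is essentially a reduction to the literature: it invokes Theorems 3.4 and 3.5 of van der Put--Schneider for formal models of rigid spaces, and then transfers the statement to the algebraic setting by a cofinality argument (by Raynaud's theorem the admissible blow-ups of $\widehat{\cX}$ are cofinal among admissible formal models, and such blow-ups are algebraizable because their centers live in some $\cX\times_R(R/\frak{m}^n)$). The direct route is worth attempting, but your construction of $r$ --- which is the heart of the matter --- has a genuine gap. The recipe ``send $\xi=(\xi_{\cX})_{\cX}$ to the point built from the generic points of the strata through the $\xi_{\cX}$'' is not defined for an arbitrary proper $R$-model (strata require an $sncd$ hypothesis, which a general proper model does not satisfy), is not canonical even when strata exist (above the generic point of an intersection of two or more components sits a whole simplex of monomial points, and $\red_{\cX}$ admits no natural section), and the points produced from different models do not ``assemble'': since every model in play is proper, $\widehat{\cX}_\eta=X^{\an}$ for \emph{every} $\cX$, so $\varprojlim \widehat{\cX}_\eta$ is a limit along identity maps and imposes no compatibility whatsoever. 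Concretely, take $X=\mathbb{P}^1_K$, $\cX=\mathbb{P}^1_R$, and $\cX'$ the blow-up of $\cX$ at a closed point $p$ of $\cX_k$; the compatible system with $\xi_{\cX}=p$ and $\xi_{\cX'}$ the generic point of the exceptional curve yields the Gauss point from $\cX$ but the divisorial point of the exceptional curve from $\cX'$ --- two distinct points of $X^{\an}$. What must actually be proved is that this net of points \emph{converges} as $\cX$ varies (equivalently, that a point of $X^{\mathrm{ZR}}$ determines a valuation of possibly higher rank whose rank-one generization is $r(\xi)$); that convergence is precisely the nontrivial content of the cited theorems, and without it even the identity $r\circ j=\mathrm{id}$ cannot be verified for your $r$.

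There is a second, independent error in the quotient-topology step. From ``$r^{-1}(U)$ open'' you conclude ``$U$ open'' via $U=j^{-1}(r^{-1}(U))$, but this requires $j$ to be \emph{continuous}, and it is not: by the universal property of the limit topology, continuity of $j$ would force continuity of every $\red_{\cX}=p_{\cX}\circ j$, whereas the reduction maps are only anti-continuous. (For $\cX=\mathbb{P}^1_R$ the preimage of the open set $\cX_k\smallsetminus\{p\}$ is the complement of the residue ball $\red_{\cX}^{-1}(p)$, which is closed but not open, since the Gauss point lies in the closure of that ball.) A correct argument exists but is different: $X^{\mathrm{ZR}}$ is quasi-compact (a cofiltered limit of spectral spaces along spectral maps) and $X^{\an}$ is Hausdorff, so the continuous surjection $r$ is automatically closed, hence a quotient map. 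Finally, your injectivity sketch is the soundest part, but as written it only treats birational points; for general points one must first recover $i(x)$ from the system of centers, use that rational functions regular at $i(x)$ generate the residue field $K(x)$, and replace a given $f$ by an expression of the form $f^q/\pi^p$ (rational powers of $|\pi|$ are dense in $\R_{>0}$) so that a discrepancy $|f(x)|\neq|f(y)|$ becomes a difference in vanishing behaviour at the common center on a suitably chosen model.
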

\begin{proof}
 Combining Theorems 3.4 and 3.5 in \cite{schneider-vdput},
  we obtain a natural surjection $r\colon X^{\mathrm{ZR}}\to X^{\an}$ with all the required properties. To be precise, the results in
 \cite{schneider-vdput} are formulated for affinoid rigid
 $K$-varieties, but as noted at the end of \cite{schneider-vdput},
 they extend immediately to quasi-separated and quasi-compact
 rigid $K$-varieties. To pass to our algebraic setting, it then suffices to observe that the
 algebraizable formal $R$-models of $X^{\an}$ are cofinal in the
 category of admissible formal $R$-models: if $\cX$ is a proper $R$-model of $X$ then by Raynaud's theorem \cite[4.1]{BL1} the admissible
 blow-ups of the $\frak{m}$-adic completion $\widehat{\cX}$ of $\cX$ are cofinal
 in the category of admissible formal $R$-models of $X$.
 The center of such a blow-up is a closed subscheme of
 $$\widehat{\cX}\times_{R}(R/\frak{m}^n)\cong
 \cX\times_{R}(R/\frak{m}^n)$$
 for some $n>0$.  In particular, admissible
 blow-ups are algebraizable (simply blow up the corresponding
 closed subscheme of $\cX$).
\end{proof}

\begin{cor}\label{cor-ZR}
Let $Z$ be a closed subscheme of $X$, and let $x$ be any point of
$(X\smallsetminus Z)^{\an}$. Then there exists a proper $R$-model $\cX$
of $X$ such that the closure of $\red_{\cX}(x)$ in $\cX_k$ is
disjoint from the closure of $Z$ in $\cX$.
\end{cor}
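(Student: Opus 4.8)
The plan is to transport the problem into the Zariski--Riemann space $X^{\mathrm{ZR}}$ and settle it there by a quasi-compactness argument, using Proposition \ref{prop-ZR} as a black box. For each proper $R$-model $\cX$ of $X$, write $\overline{Z}^{\cX}$ for the closure of $Z$ in $\cX$ and set $C_{\cX}=\overline{Z}^{\cX}\cap \cX_k$, a closed subset of the special fibre; since the closure of $\red_{\cX}(x)$ lies in $\cX_k$, the assertion for the model $\cX$ is exactly that $\overline{\{\red_{\cX}(x)\}}\cap C_{\cX}=\emptyset$. Along a domination $\cX'\to \cX$ the subsets $\overline{\{\red_{\cX'}(x)\}}$ and $C_{\cX'}$ are contained in the preimages of $\overline{\{\red_{\cX}(x)\}}$ and $C_{\cX}$ (reduction commutes with domination, and the closure of $Z$ in $\cX'$ maps into its closure in $\cX$), so the closed sets $p_{\cX}^{-1}\bigl(\overline{\{\red_{\cX}(x)\}}\cap C_{\cX}\bigr)$ form a filtered decreasing family in $X^{\mathrm{ZR}}$. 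As $X^{\mathrm{ZR}}$ is a cofiltered limit of the spectral spaces $\cX_k$ along spectral transition maps it is quasi-compact, and each $p_{\cX}$ is surjective; hence it suffices to show that the intersection of this family is empty, for then a filtered family of nonempty closed sets being excluded, some single member is already empty, which is the desired conclusion.

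Next I would identify that intersection. Writing $j(x)\in X^{\mathrm{ZR}}$ for the image of $x$, a direct computation with the sub-basic closed sets $p_{\cX}^{-1}(F)$ shows
$$\overline{\{j(x)\}}=\bigcap_{\cX}p_{\cX}^{-1}\bigl(\overline{\{\red_{\cX}(x)\}}\bigr),$$
because the smallest closed subset of $\cX_k$ containing $\red_{\cX}(x)=p_{\cX}(j(x))$ is $\overline{\{\red_{\cX}(x)\}}$. Setting $Z^{\mathrm{ZR}}:=\bigcap_{\cX}p_{\cX}^{-1}(C_{\cX})$, the family above therefore intersects in $\overline{\{j(x)\}}\cap Z^{\mathrm{ZR}}$, and I must show this is empty. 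Two facts make this clean. First, since $r$ is continuous, $r\circ j=\mathrm{id}$, and $X^{\an}$ is Hausdorff (so points are closed), one has $r\bigl(\overline{\{j(x)\}}\bigr)\subseteq \overline{\{x\}}=\{x\}$. Second, I claim $r(Z^{\mathrm{ZR}})\subseteq Z^{\an}$. Granting the claim, the intersection maps under $r$ into $\{x\}\cap Z^{\an}$, which is empty because $i(x)\notin Z$ gives $x\notin Z^{\an}$; hence the intersection itself is empty.

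It remains to prove the claim $r(Z^{\mathrm{ZR}})\subseteq Z^{\an}$, which I expect to be the main obstacle. The inclusion $j(Z^{\an})\subseteq Z^{\mathrm{ZR}}$ is immediate: a point of $Z^{\an}=\widehat{\overline{Z}^{\cX}}_{\eta}$ reduces into $(\overline{Z}^{\cX})_k=C_{\cX}$ for every $\cX$. The point is to upgrade this to $Z^{\mathrm{ZR}}=\overline{j(Z^{\an})}$, for then continuity of $r$ and closedness of $Z^{\an}$ give $r(Z^{\mathrm{ZR}})=r\bigl(\overline{j(Z^{\an})}\bigr)\subseteq\overline{Z^{\an}}=Z^{\an}$. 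This density is precisely Proposition \ref{prop-ZR} applied to $Z$ in place of $X$: since the closures $\overline{Z}^{\cX}$ are cofinal among the proper $R$-models of $Z$, the space $Z^{\mathrm{ZR}}$ is the Zariski--Riemann space of $Z$, and the retraction furnished by that proposition, together with the generization-stability of basic opens, forces $j(Z^{\an})$ to meet every basic neighbourhood of every point of $Z^{\mathrm{ZR}}$. The technical cost, and the place where care is needed, is that $Z$ need not be normal or integral: I would first replace $Z$ by its reduction (which changes neither $Z^{\an}$ nor the closures as subsets), then treat the finitely many irreducible components separately, using that the cofiltered limit commutes with the relevant finite unions, and finally pass to normalizations, where Proposition \ref{prop-ZR} applies verbatim and the resulting retractions are compatible with the finite maps down to $X$.
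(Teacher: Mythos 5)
Your overall architecture is essentially the paper's: the paper also works in $X^{\mathrm{ZR}}$, identifies $\overline{\{j(x)\}}=\bigcap_{\cX}p_{\cX}^{-1}\bigl(\overline{\{\red_{\cX}(x)\}}\bigr)$, notes that this set lies in $r^{-1}(x)$ and is therefore disjoint from $j(Z^{\an})$, and extracts a single good model by a compactness argument. The genuine gap in your proposal is the key claim $r(Z^{\mathrm{ZR}})\subseteq Z^{\an}$, which you reduce to the density $Z^{\mathrm{ZR}}=\overline{j(Z^{\an})}$ and then to the assertion that ``the closures $\overline{Z}^{\cX}$ are cofinal among the proper $R$-models of $Z$, so Proposition \ref{prop-ZR} applies verbatim.'' That assertion is both ill-posed and unproven. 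Ill-posed: the schematic closures $\overline{Z}^{\cX}$ are in general not normal, and $Z$ itself need not be reduced, irreducible or normal, so these closures are not $R$-models in the paper's sense and Proposition \ref{prop-ZR} (whose proof goes through van der Put--Schneider and Raynaud's theorem for the \emph{normal} proper scheme in question) does not apply to them directly. Unproven, and not routine even after your reductions to components and normalizations: to dominate an arbitrary proper model $\cZ$ of (the normalization of a component of) $Z$ by a normalized closure $\overline{Z}^{\cX}$, you must show that every proper modification of $\overline{Z}^{\cX_0}$ that is an isomorphism on generic fibers is dominated by a blow-up with center supported in the special fiber, and that such blow-ups extend to blow-ups of $\cX_0$ whose strict transform is again the schematic closure; this is a Raynaud--Gruson flattening/algebraization argument of the same order of difficulty as the proof of Proposition \ref{prop-ZR} itself, not bookkeeping. (A secondary unproven step: the surjectivity of $p_{\cX}$, which your compactness reduction also needs.)

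The irony is that the gap has a one-line repair, and it is exactly the step your proposal never mentions, which is the crux of the paper's proof: if $\mathfrak{Z}$ denotes the $\mathfrak{m}$-adic completion of the schematic closure of $Z$ in a proper model $\cX$, then the reduction map $Z^{\an}\to\mathfrak{Z}_k$ is \emph{surjective}, because $\mathfrak{Z}$ is $R$-flat; its image is closed by \cite[p.542]{berk-vanish} and contains all closed points of $\mathfrak{Z}_k$ by \cite[10.1.38]{liu}. Granting this, your density claim is immediate and needs no cofinality, no normalization, and no appeal to Proposition \ref{prop-ZR} for $Z$: given $y\in Z^{\mathrm{ZR}}$ and a basic open neighbourhood $p_{\cX}^{-1}(U)$ of $y$, lift $p_{\cX}(y)\in\overline{Z}^{\cX}\cap\cX_k$ to a point $z\in Z^{\an}$; then $j(z)\in p_{\cX}^{-1}(U)\cap j(Z^{\an})$. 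The same fact yields surjectivity of $p_{\cX}$ (via $p_{\cX}\circ j=\red_{\cX}$, applied to $Z=X$). With that substitution your quasi-compactness framework closes correctly; without it, or some equivalent input, the proof is incomplete.
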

\begin{proof}
 The fiber $r^{-1}(x)$ is closed in $X^{\mathrm{ZR}}$ and disjoint from
$j(Z^{\an})$. The closure of $\{j(x)\}$ in $X^{\mathrm{ZR}}$ is
the intersection of the sets $p_{\cX}^{-1}(C_{\cX})$ where $\cX$
runs through the proper $R$-models of $X$, $C_{\cX}$ denotes the
closure of $\red_{\cX}(x)$ in $\cX_k$ and
$p_{\cX}\colon X^{\mathrm{ZR}}\to \cX_k$ is the natural projection.
 Thus we can find a proper $R$-model $\cX$
of $X$ such that $\red_{\cX}^{-1}(C_{\cX})$ is disjoint from
$j(Z^{\an})$. If we denote by $\mathfrak{Z}$ the $\frak{m}$-adic
completion of the schematic closure of $Z$ in $\cX$, then
$\mathfrak{Z}_\eta=Z^{\an}$ by properness of $Z$ and the reduction
map $Z^{\an}\to \mathfrak{Z}_k$ is surjective because
$\mathfrak{Z}$ is $R$-flat (its image is closed by
\cite[p.542]{berk-vanish} and it contains all the closed points of
$\mathfrak{Z}_k$ by \cite[10.1.38]{liu}). It follows that
$C_{\cX}$ must be disjoint from the
 closure of $Z$ in $\cX$.
\end{proof}

\subsection{Divisorial and monomial points}
\sss \label{sss-divpointexpl} Let $X$ be a normal integral
separated $K$-scheme of finite type. Let $\cX$ be an $R$-model for
$X$ and let $E$ be an irreducible component of $\cX_k$. Denote by
$\xi$ the generic point of $E$. The local ring
$\mathcal{O}_{\cX,\xi}$ is a discrete valuation ring with fraction
field $K(X)$, the field of rational functions on $X$. We denote by
$v_E$ the associated discrete valuation on $K(X)$, normalized in
such a way that its restriction to $K$ coincides with $v_K$. This
is the unique valuation on $\cX$ that extends $v_K$ and is
centered at $\xi$. The index of $\Z=|K^{\times}|$ in the value
group of $v_E$ is equal to the multiplicity of $E$ in the divisor
$\cX_k$.
 The
valuation $v_E$ defines a birational point $x$ of $X^{\an}$ such
that $v_x=v_E$. We call this point the divisorial point associated
to the pair $(\cX,E)$. It is the unique point of
$\red_{\cX}^{-1}(\xi)$. We call the pair $(\cX,E)$ a divisorial
presentation for $x$.
 Removing a closed subset of $\cX_k$ that does not contain $E$ has
no influence on the divisorial valuation $v_E$, so that we can
always assume that $\cX$ is regular and that $\cX_k$ is
irreducible.

\sss \label{sss-mondef} The construction of divisorial points can
be generalized in the following way. Let $\cX$ be an $R$-model of
$X$ and let $(E_1,\ldots,E_r)$ be a tuple of distinct irreducible
components of $\cX_k$ such that the intersection
$$E=\cap_{i=1}^r E_i$$ is non-empty. Let $\xi$ be a generic point
of $E$, and assume that, locally at $\xi$, the $R$-scheme $\cX$ is
regular and $\cX_k$ is a divisor with strict normal crossings. In
this case, we call $(\cX,(E_1,\ldots,E_r),\xi)$ an $sncd$-triple
for $X$.
 Then there exist a regular system of local parameters
$z_1,\ldots,z_r$ in $\mathcal{O}_{\cX,\xi}$, positive integers
$N_1,\ldots,N_r$ and a unit $u$ in $\mathcal{O}_{\cX,\xi}$ such
that
$$\pi:=uz_1^{N_1}\cdot \ldots \cdot z_r^{N_r}$$ is a uniformizer in $R$
and such that, locally at $\xi$, the prime divisor $E_i$ is
defined by the equation $z_i=0$.

We say that an element $f$ of $\widehat{\mathcal{O}}_{\cX,\xi}$ is
 admissible if it has an expansion of the form
 \begin{equation}\label{eq-adm}
 f= \sum_{\beta\in
 \N^r}c_{\beta} z^{\beta}\end{equation}
 where each coefficient $c_{\beta}$ is either zero or a unit in
 $\widehat{\mathcal{O}}_{\cX,\xi}$. Such an expansion is called an
 admissible expansion of $f$.
Let $\alpha$ be an element of $\R_{\geq 0}^r$ such that $$\alpha_1
N_1+ \ldots + \alpha_r{N_r}=1.$$ For every $\beta$ in $\R^r$, we
set $$\alpha\cdot \beta=\sum_{i=1}^r \alpha_i\beta_i.$$

\begin{lemma}\label{lemm-admiss}
Let $A$ be a Noetherian local ring with maximal ideal $\frak{m}_A$
and residue field $\kappa_A$, and let $(y_1,\ldots,y_m)$ be a
system of generators for $\frak{m}_A$. We denote by $\widehat{A}$
the $\frak{m}_A$-adic completion of $A$. Let
 $B$ be a subring of $A$ such that the elements $y_1,\ldots,y_m$
 belong to $B$ and generate the ideal $B\cap \frak{m}_A$ in $B$.
 Then, in the ring $\widehat{A}$, every element $f$ of $B$ can be
written as
\begin{equation}\label{eq-expans} f=\sum_{\beta\in
\N^m}c_{\beta}y^{\beta}\end{equation} where the coefficients
 $c_{\beta}$ belong to $(A^{\times}\cap B)\cup\{0\}$.
\end{lemma}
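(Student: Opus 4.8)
The plan is to construct the coefficients $c_\beta$ by a successive-approximation argument, expanding $f$ one total degree at a time and peeling off a unit (or zero) at each stage. The crucial bookkeeping device is to filter not by the powers $\mathfrak{m}_A^n$ of the maximal ideal of $A$, but by the powers of the ideal $\mathfrak{n}:=B\cap\mathfrak{m}_A=(y_1,\ldots,y_m)B$ of $B$, so that every intermediate quantity stays inside $B$. This is exactly where the hypothesis that $y_1,\ldots,y_m$ generate $B\cap\mathfrak{m}_A$ inside $B$ is used.

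First I would record the elementary decomposition step: every $g\in B$ can be written as $g=c+h$ with $c\in(A^{\times}\cap B)\cup\{0\}$ and $h\in\mathfrak{n}$. Indeed, since $A$ is local, an element of $B$ is a unit of $A$ precisely when it does not lie in $\mathfrak{m}_A$; so if $g\notin\mathfrak{m}_A$ we take $c=g\in A^{\times}\cap B$ and $h=0$, whereas if $g\in\mathfrak{m}_A$ then $g\in B\cap\mathfrak{m}_A=\mathfrak{n}$ and we take $c=0$, $h=g$. This peeling-off is coarse but sufficient for our purposes.

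Next I would run the induction. The inductive hypothesis at stage $n$ is that coefficients $c_\beta\in(A^{\times}\cap B)\cup\{0\}$ have been chosen for all $|\beta|\le n$ so that the remainder $r_n:=f-\sum_{|\beta|\le n}c_\beta y^\beta$ lies in $\mathfrak{n}^{n+1}$. The base case is the decomposition step applied to $f$ itself. For the inductive step I would use that $\mathfrak{n}^{n+1}=(y_1,\ldots,y_m)^{n+1}B$ is generated by the degree-$(n+1)$ monomials to write $r_n=\sum_{|\beta|=n+1}y^\beta g_\beta$ with $g_\beta\in B$; decomposing each $g_\beta=c_\beta+h_\beta$ as above and declaring these $c_\beta$ to be the new coefficients, the remainder becomes $r_{n+1}=\sum_{|\beta|=n+1}y^\beta h_\beta$, which lies in $\mathfrak{n}^{n+2}$ because $y^\beta\in\mathfrak{n}^{n+1}$ and each $h_\beta\in\mathfrak{n}$. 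Note that the choice of the $g_\beta$ is not unique (the $y_i$ may satisfy relations), but this is harmless since we only need the existence of one expansion, not uniqueness.

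Finally I would address convergence in $\widehat{A}$. Since $\mathfrak{n}\subseteq\mathfrak{m}_A$, the remainder satisfies $r_n\in\mathfrak{m}_A^{n+1}$, so the partial sums $\sum_{|\beta|\le n}c_\beta y^\beta$ converge to $f$ in the $\mathfrak{m}_A$-adic topology; moreover $c_\beta y^\beta\in\mathfrak{m}_A^{|\beta|}$ guarantees that the series $\sum_{\beta\in\N^m}c_\beta y^\beta$ converges in $\widehat{A}$ on its own. Together these give the asserted equality $f=\sum_\beta c_\beta y^\beta$ in $\widehat{A}$. I do not anticipate any serious obstacle: the conceptual heart of the argument is simply the decision to work with the filtration $\mathfrak{n}^\bullet$ rather than $\mathfrak{m}_A^\bullet\cap B$, and the only care needed is the routine verification that each remainder drops one step in this filtration.
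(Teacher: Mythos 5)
Your proof is correct and follows essentially the same route as the paper's: the same unit-versus-maximal-ideal dichotomy coming from locality of $A$, and the same degree-by-degree induction in which non-unit coefficients are pushed into higher-degree monomials using the hypothesis that $y_1,\ldots,y_m$ generate $B\cap\mathfrak{m}_A$ in $B$. Your filtration by powers of $\mathfrak{n}=B\cap\mathfrak{m}_A$ is just a repackaging of the paper's ``$B$-linear combination of degree-$i$ monomials,'' and your explicit convergence argument merely fills in a step the paper leaves implicit.
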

\begin{proof}
Such an expansion for $f$ is constructed inductively in the
following way. Since $A$ is local, $f$ belongs either to
$A^{\times}$ or to the maximal ideal $\frak{m}_A$. In the latter
case, we can write $f$ as a $B$-linear combination of the elements
$y_1,\ldots,y_m$.
 Now suppose that $i$ is a positive
integer and that we can write every $f$ in $B$ as a sum of an
element $f_i$ of the form \eqref{eq-expans} and a $B$-linear
combination of degree $i$ monomials in the elements
$y_1,\ldots,y_m$. Applying this assumption to the coefficients of
the $B$-linear combination, we see that we can write $f$ as the
sum of an element $f_{i+1}$ of the form \eqref{eq-expans} and a
$B$-linear combination of degree $i+1$ monomials in the elements
$y_1,\ldots,y_m$ such that $f_i$ and $f_{i+1}$ have the same
coefficients in degree $<i$. Repeating this construction, we find
an expansion for $f$ of the form \eqref{eq-expans}.
\end{proof}

\begin{prop}\label{prop-qmon}
We keep the notations of \eqref{sss-mondef}.
\begin{enumerate}
\item \label{item:admiss} Every element of
$\widehat{\mathcal{O}}_{\cX,\xi}$ is admissible. \item
\label{item:monval} There exists a unique real valuation
$$v_{\alpha}\colon K(X)^{\times}\to \R$$ such that
$$v_{\alpha}(f)= \min\{\alpha\cdot \beta\,|\,\beta\in \N^r,\,c_{\beta}\neq 0 \} $$
for every  non-zero element $f$ in $\mathcal{O}_{\cX,\xi}$ and
every admissible expansion of $f$ of the form
 \eqref{eq-adm}. This valuation does not depend on the choice of
 $z_1,\ldots,z_r$, and its restriction to $K$ coincides with
 $v_K$.
 \end{enumerate}
\end{prop}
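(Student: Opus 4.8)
Part \eqref{item:admiss} I would obtain as an immediate application of Lemma \ref{lemm-admiss}, taking both $A$ and $B$ in that lemma to be the complete local ring $\widehat{\mathcal{O}}_{\cX,\xi}$ and $(y_1,\dots,y_m)$ to be the regular system of parameters $(z_1,\dots,z_r)$. These parameters generate the maximal ideal of $\widehat{\mathcal{O}}_{\cX,\xi}$, and since this ring is already $\frak{m}$-adically complete the lemma yields, for an arbitrary element, an expansion $\sum_\beta c_\beta z^\beta$ with each $c_\beta$ a unit of $\widehat{\mathcal{O}}_{\cX,\xi}$ or zero. This is precisely an admissible expansion in the sense of \eqref{eq-adm}.

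For part \eqref{item:monval} the essential point is that the number $\min\{\alpha\cdot\beta\mid\beta\in\N^r,\ c_\beta\neq 0\}$ does not depend on the chosen admissible expansion; everything else then follows formally. I would encode the weight by the decreasing family of ideals $\frak{a}_\lambda\subseteq\widehat{\mathcal{O}}_{\cX,\xi}$ ($\lambda\in\R_{\geq 0}$) generated by the monomials $z^\beta$ with $\alpha\cdot\beta\geq\lambda$, together with $\frak{a}_{>\lambda}=\bigcup_{\mu>\lambda}\frak{a}_\mu$. Each $\frak{a}_\lambda$ is finitely generated, hence closed, so a convergent admissible sum of monomials of weight $\geq\lambda$ lies in $\frak{a}_\lambda$. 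Accordingly, if $f$ has an admissible expansion whose minimal weight is $\lambda_0$, then $f\in\frak{a}_{\lambda_0}$, and I would set $v_\alpha(f)=\sup\{\lambda\mid f\in\frak{a}_\lambda\}$, a quantity manifestly independent of any expansion. Since $\alpha\neq 0$, the ideals $\frak{a}_\lambda$ are contained in arbitrarily high powers of $\frak{m}$, so $\bigcap_\lambda\frak{a}_\lambda=0$ by Krull's intersection theorem and $v_\alpha(f)$ is finite for $f\neq 0$; by construction $v_\alpha(f)\geq\lambda_0$.

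The crux is the reverse inequality $v_\alpha(f)\leq\lambda_0$, equivalently that $f\notin\frak{a}_{>\lambda_0}$, i.e. that the initial form of $f$ (the sum of its weight-$\lambda_0$ terms) is nonzero in $\frak{a}_{\lambda_0}/\frak{a}_{>\lambda_0}$. For this I would identify the associated graded ring $\mathrm{gr}^\alpha\widehat{\mathcal{O}}_{\cX,\xi}=\bigoplus_\lambda\frak{a}_\lambda/\frak{a}_{>\lambda}$: after separating the coordinates with $\alpha_i=0$, I expect a standard computation, using that $z_1,\dots,z_r$ is a regular system of parameters, to identify it with the polynomial ring $A_0[\,z_i\mid\alpha_i>0\,]$, graded by weight, over the regular local—hence integral—ring $A_0=\widehat{\mathcal{O}}_{\cX,\xi}/(z_i\mid\alpha_i>0)$. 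In particular each graded piece is $A_0$-free on the corresponding monomials, so an initial form with unit coefficients is nonzero, giving well-definedness; and $\mathrm{gr}^\alpha$ is a domain, so initial forms multiply, giving $v_\alpha(fg)=v_\alpha(f)+v_\alpha(g)$. Together with $v_\alpha(f+g)\geq\min(v_\alpha(f),v_\alpha(g))$ (immediate since the $\frak{a}_\lambda$ are ideals), this shows $v_\alpha$ is a valuation on $\widehat{\mathcal{O}}_{\cX,\xi}\setminus\{0\}$, extending uniquely to $K(X)=\mathrm{Frac}(\mathcal{O}_{\cX,\xi})\subseteq\mathrm{Frac}(\widehat{\mathcal{O}}_{\cX,\xi})$. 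I expect this identification of $\mathrm{gr}^\alpha$ to be the main obstacle, the one delicate point being the mixed-characteristic case, where the relation $\pi=uz_1^{N_1}\cdots z_r^{N_r}$ places the uniformizer of $R$ in weight $1$; this causes no collapse precisely because the normalization $\alpha_1N_1+\dots+\alpha_rN_r=1$ makes the relation homogeneous of weight $1$.

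The remaining assertions are formal. For independence of $z_1,\dots,z_r$, any other admissible choice cuts out the same prime divisors $E_i$ at $\xi$, so $(z_i')=(z_i)$ as height-one primes and $z_i'=u_iz_i$ with $u_i$ a unit; substituting converts an admissible expansion in the $z_i'$ into one in the $z_i$ with the same exponents, leaving the minimal weight unchanged. Uniqueness of $v_\alpha$ holds because the displayed formula pins it down on $\mathcal{O}_{\cX,\xi}$ and a valuation is determined on the fraction field by its restriction there. Finally $v_\alpha$ restricts to $v_K$ on $K$, since $v_\alpha(\pi)=v_\alpha(uz_1^{N_1}\cdots z_r^{N_r})=\sum_i\alpha_iN_i=1=v_K(\pi)$ and $v_\alpha$ vanishes on the units of $R$.
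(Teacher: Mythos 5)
Your part~\eqref{item:admiss} is exactly the paper's argument, and most of the scaffolding in part~\eqref{item:monval} is sound: the ideals $\frak{a}_\lambda$ are indeed finitely generated and closed, the Krull-intersection argument gives finiteness of $v_\alpha$, and \emph{granting} your description of $\mathrm{gr}^\alpha\widehat{\mathcal{O}}_{\cX,\xi}$, the well-definedness, multiplicativity, independence of $z_1,\dots,z_r$ and restriction to $K$ all follow as you say. But the step you defer to ``a standard computation'' --- that $\mathrm{gr}^\alpha\widehat{\mathcal{O}}_{\cX,\xi}$ is the polynomial ring $A_0[\,z_i\mid\alpha_i>0\,]$ with graded pieces free over $A_0$ on the weight-$\lambda$ monomials --- is a genuine gap: it is not a quotable standard fact, and it is essentially equivalent to the statement being proved, since freeness of the graded pieces is precisely the assertion that the minimal weight of an admissible expansion is independent of the expansion. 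The textbook result (quasi-regularity of regular sequences, e.g.\ Matsumura, \emph{Commutative Ring Theory}, Theorem 16.2) covers only the unweighted case, i.e.\ the $I$-adic filtration where all $\alpha_i$ are equal; for weights that may be irrational and may vanish one must do real work (for instance, perturb $\alpha$ to rational weights --- permissible because any single relation involves finitely many monomials --- clear denominators, and redo the quasi-regularity induction with weighted degrees). Moreover your one sentence of justification aims at the wrong target: the relation $\pi=uz_1^{N_1}\cdots z_r^{N_r}$ plays no role at all in the structure of $\mathrm{gr}^\alpha$, which depends only on $z_1,\dots,z_r$ being a regular system of parameters of a regular local ring; and when some $\alpha_i=0$ the $A_0$-coefficients of an initial form are not your units $c_\beta$ but infinite sums $\sum_{\beta^0}c_{(\beta^+,\beta^0)}\overline{z}^{\beta^0}$ over the zero-weight exponents, whose nonvanishing in $A_0$ is yet another instance of the same problem. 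So the heart of the proposition is asserted rather than proved.

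It is instructive to see how the paper circumvents exactly this obstacle: it never forms the weighted graded ring. It uses only the ordinary $\frak{m}_A$-adic associated graded ring, which is a polynomial ring over the residue field $\kappa_A$ because $A$ is regular --- that \emph{is} the standard fact --- to show that an admissible expansion of $0$ has all coefficients equal to zero. From this it deduces that the residue polynomial $f_c\in\kappa_A[z_1,\dots,z_r]$, supported on the compact faces of the Newton polyhedron of an admissible expansion, depends only on $f$. The weights then enter purely combinatorially: a linear form with nonnegative coefficients attains its minimum on the compact faces, so $v_\alpha(f)=\min\{\alpha\cdot\beta\mid \beta\in\mathrm{Supp}(f_c)\}$ is well defined for arbitrary real $\alpha$, and multiplicativity follows from $(fg)_\alpha=f_\alpha g_\alpha$ in the domain $\kappa_A[z_1,\dots,z_r]$. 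If you wish to keep your filtration formulation --- which is otherwise a clean alternative --- the most economical repair is to derive the freeness of the graded pieces of $\mathrm{gr}^\alpha$ from this Newton-polyhedron argument; with that supplied, the rest of your proof goes through.
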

\begin{proof}
\eqref{item:admiss} This follows at once from Lemma
\ref{lemm-admiss}, taking $A=B=\widehat{\mathcal{O}}_{\cX,\xi}$.

\eqref{item:monval} Uniqueness is clear from \eqref{item:admiss}.
Let us show that our definition of the value $v_{\alpha}(f)$ does
not depend on the chosen admissible expansion of $f$. For
notational convenience, we set $A=\widehat{\mathcal{O}}_{\cX,\xi}$
and we denote by $\frak{m}_A$ and $\kappa_A$ its maximal ideal and
residue field, respectively.

  To every
admissible expansion \eqref{eq-adm} we can associate a Newton
polyhedron $\Gamma$. This is the convex hull of the set
$$\{\beta\in \N^r\,|\,c_\beta\neq 0\}+\R_{\geq 0}^r$$
in $\R^r$. We denote by $\Gamma_c$ the set of points in $\N^r$
that lie on a compact face of $\Gamma$ and we set
$$f_c=\sum_{\beta\in
\Gamma^c}\overline{c}_{\beta}z^{\beta}$$
 where $\overline{c}_\beta$ denotes the residue
class of $c_\beta$ in $\kappa$. Thus $f_c$ is an element of
$\kappa[z_1,\ldots,z_r]$. We claim that it only depends on $f$,
and not on the chosen admissible expansion \eqref{eq-adm}. To see
this, let $$f=\sum_{\beta\in \N^r}c'_{\beta}z^\beta$$ be another
admissible expansion of $f$, with associated set $\Gamma_c'$ and
polynomial $f'_c$.
 Then
$$\sum_{\beta\in \N^r}(c_\beta-c'_{\beta})z^\beta=0.$$ Choosing
admissible expansions for the elements $c_\beta-c'_{\beta}$ that
do not lie in $A^{\times}\cup \{0\}$, we
 can rewrite this expression into an admissible expansion
 $$0=\sum_{\beta\in \N^r}d_\beta z^\beta$$ such that
 $\overline{d}_\beta=\overline{c}_\beta-\overline{c}'_\beta$ for
 all $\beta$ in $\Gamma_c\cup \Gamma'_c$. Since the graded
 $\kappa$-algebra $\oplus_{i\geq 0} \frak{m}_A^i/\frak{m}_A^{i+1}$ of the local ring $A$ is isomorphic to the
 polynomial ring over $\kappa_A$ in the residue classes of
 $z_1,\ldots,z_d$ modulo $\frak{m}^2_A$, the elements
 $d_\beta$ must all be equal to zero. It follows that
 $\Gamma_c=\Gamma_c'$ and $f_c=f'_c$.

 Now we set
$$
m=\min\{\alpha\cdot \beta\,|\,\beta\in
\Gamma_c,\,\overline{c}_\beta\neq 0\}
$$
 and we
 denote by
$\Gamma^{\alpha}_c$ the set of points $\beta$ in $\Gamma_c$ such
that $\alpha\cdot \beta=m$. We set
$$f_{\alpha}=\sum_{\beta\in
\Gamma_c^{\alpha}}\overline{c}_{\beta}z^{\beta}\in
\kappa_A[z_1,\ldots,z_r].$$ Then $m$ and $f_{\alpha}$ are
completely determined by $f_c$, so that they do not depend on the
chosen admissible expansion for $f$. Moreover,
\begin{equation}\label{eq-minimum}
v_{\alpha}(f)=\min\{\alpha\cdot \beta\,|\,\beta\in
\Gamma_c,\,c_\beta\neq 0\}=m
\end{equation}
 because for every linear form $L$ on $\R^r$ with
non-negative coefficients, the minimal value of $L$ on $\Gamma$ is
reached on $\Gamma^c$.
 It follows that $v_{\alpha}(f)$ only depends on
$f$, and not on the chosen admissible expansion.

 Now it is easy to check that $v_{\alpha}$ is a valuation: if $f$
 and $g$ are non-zero elements in $\mathcal{O}_{\cX,\xi}$, then clearly
 $$v_{\alpha}(f+g)\geq \min\{v_{\alpha}(f),v_{\alpha}(g)\}$$
and we also have $v_{\alpha}(f\cdot
g)=v_{\alpha}(f)+v_{\alpha}(g)$ because $(f\cdot
g)_{\alpha}=f_{\alpha}\cdot g_{\alpha}$.
 It is
 obvious that $v_{\alpha}(f)$ does not depend on the choice of $z_1,\ldots,z_r$
 since these elements are determined up to a unit in
 $\mathcal{O}_{\cX,\xi}$. To see that $v_{\alpha}$ extends $v_K$,
 note that
 $$\pi=uz_1^{N_1}\ldots z_r^{N_r}$$ is an admissible expansion for
 the uniformizer $\pi$ in $R$ so that
 $$v_{\alpha}(\pi)=\sum_{i=1}^r \alpha_iN_i=1.$$
 \end{proof}

\sss The valuation $v_{\alpha}$ from Proposition \ref{prop-qmon}
extends the discrete valuation $v_K$ on $K$, and thus defines a
birational point $x$ of $\widehat{\cX}_\eta \subset X^{\an}$ such
that $v_x=v_{\alpha}$. We call this birational point the monomial
point associated to the $sncd$-triple $(\cX,(E_1,\ldots,E_r),\xi)$
and the tuple $\alpha$, and we will say that these data represent
the point $x$. Replacing $\cX$ by an open neighbourhood of $\xi$
has no influence on the associated monomial point $x$, so that we
can always assume that $\cX$ is regular and that $\cX_k$ is a
divisor with strict normal crossings.

\sss \label{sss-reducemon} Permuting the indices, we may assume
that there exists an element $r'$ in $\{1,\ldots,r\}$ such that
$\alpha_i\neq 0$ for all $i\in \{1,\ldots, r'\}$ and $\alpha_i=0$
for all $i\in \{r'+1,\ldots,r\}$. If we denote by $\xi'$ the
unique generic point of $\cap_{i=1}^{r'}E_i$ whose closure
contains $\xi$, then it is clear from the constructions that the
data
$$((E_1,\ldots,E_r),\xi,(\alpha_1,\ldots,\alpha_r))\mbox{ and
}((E_1,\ldots,E_{r'}),\xi',(\alpha_1,\ldots,\alpha_{r'}))$$ define
the same monomial point $x$ of $X^{\an}$.   Moreover, $\xi'$ is
the center of $v_{x}$, and
 $\alpha_i=v_x(E_i)$ for every
$i\in \{1,\ldots,r\}$. Thus $\xi'=\red_{\cX}(x)$ and $\xi'$,
$(E_1,\ldots,E_{r'})$ and $(\alpha_1,\ldots,\alpha_{r'})$ are
completely determined by the model $\cX$ and the monomial point
$x$, up to permutation of the indices $\{1,\ldots,r'\}$. It is
clear that the rational rank of $X$ at $x$ is the dimension of the
$\Q$-vector subspace of $\R$ generated by the coordinates of
$\alpha$.

\sss \label{sss-mondef2} We say that a point of $X^{\an}$ is
divisorial if it is the divisorial point associated to some
$R$-model $\cX$ of $X$ and some irreducible component of $\cX_k$.
We denote the set of divisorial points on $X^{\an}$ by
$\mathrm{Div}(X)$. The notion of a monomial point on $X^{\an}$ is
defined analogously, and the set of monomial points on $X^{\an}$
is denoted by $\mathrm{Mon}(X)$. Note that every divisorial point
is monomial. If $x$ is a monomial
 point on $X^{\an}$ and $\cX$ is an $R$-model of $X$, then we say that
 $\cX$ is adapted to $x$ if there exist irreducible components
 $(E_1,\ldots,E_r)$ of $\cX_k$, a generic point $\xi$ of
 $\cap_{i=1}^rE_i$ and a tuple $\alpha$ in $\R^r_{>0}$ such that
 $(\cX,(E_1,\ldots,E_r),\xi)$ is an $sncd$-triple and
 $x$ is the monomial point associated to this $sncd$-triple and
 the tuple $\alpha$.

\begin{prop}\label{prop-rank1}
Let $x$ be a monomial point of $X^{\an}$. Then the following are
equivalent:
\begin{enumerate}
\item The point $x$ is divisorial. \item The valuation $v_x$ is
discrete. \item The analytic space $X^{\an}$ has rational rank one
at $x$.
 \end{enumerate}
\end{prop}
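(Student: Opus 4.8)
The plan is to prove the cyclic chain (1) $\Rightarrow$ (2) $\Leftrightarrow$ (3) $\Rightarrow$ (1), the substance being in the last implication. By \eqref{sss-reducemon} I may assume the presentation of $x$ is reduced, so that $x$ is the monomial point attached to an $sncd$-triple $(\cX,(E_1,\ldots,E_r),\xi)$ and a tuple $\alpha\in\R^r_{>0}$ with $\sum_i\alpha_iN_i=1$. I first compute the value group of $v_x=v_{\alpha}$. Since each $z_i$ lies in $K(X)^{\times}$ with $v_{\alpha}(z_i)=\alpha_i$, and since by Proposition \ref{prop-qmon} every value $v_{\alpha}(f)$ with $f\in\mathcal{O}_{\cX,\xi}$ nonzero is of the form $\alpha\cdot\beta$ with $\beta\in\N^r$, the value group $v_{\alpha}(K(X)^{\times})$ is exactly the finitely generated subgroup $\Gamma=\sum_{i=1}^r\Z\alpha_i$ of $\R$. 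As passing to the completion does not change the value group, $|\mathscr{H}(x)^{\times}|$ corresponds to $\Gamma$, so the rational rank of $X$ at $x$ equals $\dim_{\Q}(\Gamma\otimes_{\Z}\Q)$, the dimension of the $\Q$-span of $\alpha_1,\ldots,\alpha_r$ (as already recorded in \eqref{sss-reducemon}). A nonzero finitely generated subgroup of $\R$ is discrete, hence infinite cyclic, if and only if its $\Q$-rank is one; this yields (2) $\Leftrightarrow$ (3). The implication (1) $\Rightarrow$ (2) is immediate, since the divisorial valuation $v_E$ is the valuation of the discrete valuation ring $\mathcal{O}_{\cX,\xi}$.

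For (3) $\Rightarrow$ (1), suppose $X^{\an}$ has rational rank one at $x$. By the equivalence just proved the coordinates $\alpha_i$ span a one-dimensional $\Q$-subspace of $\R$, so I may write $\alpha_i=\lambda a_i$ with $\lambda>0$ and $a_1,\ldots,a_r$ positive integers. The normalization $\sum_i\alpha_iN_i=1$ then forces $\lambda=1/M$ with $M=\sum_{i=1}^r a_iN_i\in\Z_{>0}$, so that each $\alpha_i=a_i/M$ is rational. It remains to realize the monomial valuation with these rational weights as a divisorial valuation.

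To do so I would work locally at $\xi$, where $\cX$ is regular with strict normal crossings special fiber and $z_1,\ldots,z_r$ form part of a regular system of parameters, and produce a proper birational morphism $\cX'\to\cX$ carrying an exceptional prime divisor $E$ whose order function $\ord_E$ restricts on $\mathcal{O}_{\cX,\xi}$ to the monomial valuation with $\ord_E(z_i)=a_i$ for all $i$; after normalization $\cX'$ is an $R$-model of $X$. The multiplicity of $E$ in $\cX'_k$ is then $\ord_E(\pi)=\ord_E(uz_1^{N_1}\cdots z_r^{N_r})=\sum_i a_iN_i=M$, so the associated valuation normalized by $v_E|_K=v_K$ is $v_E=\tfrac1M\ord_E$ and satisfies $v_E(z_i)=a_i/M=\alpha_i$. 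As two monomial valuations with the same weights on $z_1,\ldots,z_r$ agree on all of $K(X)$, this gives $v_E=v_{\alpha}=v_x$, so $x$ is the divisorial point associated to $(\cX',E)$ and is therefore divisorial.

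The main obstacle is the construction of $\cX'$ and $E$ in the previous paragraph, namely the classical fact that a monomial valuation with positive integer weights on a regular local ring is divisorial and can be obtained by an explicit toric modification. I expect to justify this either by a direct weighted blow-up computation in the strict normal crossings chart, or by a Euclidean-type sequence of ordinary blow-ups along the strata $\cap_iE_i$ and their successive strict transforms, each step preserving regularity and the strict normal crossings condition; the identity $v_E=\tfrac1M\ord_E$ then carries out the bookkeeping relating $\ord_E$ to the $v_K$-normalized valuation.
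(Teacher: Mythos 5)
Your proposal is correct and takes essentially the same approach as the paper: its proof likewise treats (3) $\Rightarrow$ (1) as the only substantive implication, uses rational rank one to force $\alpha\in\Q^r_{>0}$, and then realizes the monomial valuation as a divisorial one by blow-ups on the given $sncd$-model. The construction you defer as the ``main obstacle'' is carried out in the paper by precisely your second suggested alternative: a Euclidean-type sequence of blow-ups of the closure of $\xi$ and of the successive strata, which (for $\alpha_1$ minimal) replaces $\alpha$ by $(\alpha_1,\alpha_2-\alpha_1,\ldots,\alpha_r-\alpha_1)$, removes zero entries via \eqref{sss-reducemon}, and terminates because the commensurable weights behave like a tuple of positive integers under the Euclidean algorithm, ending at a presentation with $r=1$.
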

\begin{proof}
It suffices to prove that $x$ is divisorial if $X^{\an}$ has
rational rank one at $x$.
 Let $(\cX,(E_1,\ldots,E_r),\xi)$ be an $sncd$-triple for $X$ and
 let $\alpha$ be an element of $\R^r_{>0}$ such that these data
 represent the point $x$. Since $X^{\an}$ has rational rank one at
 $x$, the tuple $\alpha$ must belong to $\Q^r_{>0}$.
 Permuting the indices, we may assume that $\alpha_1$ is minimal
 among the coordinates of $\alpha$. We consider the blow-up $h\colon \cX'\to \cX$ at the closure
 of $\xi$.
  We denote by $E'_i$ the strict transform of $E_i$, for
all $i$ in $\{2,\ldots,r\}$, and we denote by $E'_1$ the
exceptional divisor of the blow-up. Let $\xi'$ be the generic
point of $E'_1\cap\ldots \cap E'_r$.  Then a straightforward
computation shows that $$(\cX',(E'_1,\ldots,E'_r),\xi')$$ is still
an $sncd$-triple, and together with the tuple
$$\alpha'=(\alpha_1,\alpha_2-\alpha_1,\ldots,\alpha_r-\alpha_1)$$ it represents the point
$x$. By the construction in
 \eqref{sss-reducemon}, we can eliminate the zero entries in
 $\alpha'$. The rational
 rank of $X^{\an}$ at $x$ is equal to one, so that all the
 $\alpha_i$ are integer multiples of a common rational number
 $q>0$. An elementary induction argument shows that, starting from
 a finite tuple of positive integers and repeatedly choosing
 one of the minimal non-zero coordinates and subtracting it from
 the other coordinates, we arrive after a finite number of steps
 at a tuple with only one non-zero coordinate.
 Thus, repeating our blow-up procedure
 finitely many times, we arrive at a monomial presentation with $r=1$, that
 is, a divisorial presentation for $x$.
\end{proof}

\begin{prop}\label{prop-dense}
The set $\mathrm{Div}(X)$ of divisorial points on $X^{\an}$ is
dense in $X^{\an}$.
\end{prop}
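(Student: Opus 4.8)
The plan is to approximate an arbitrary point $x\in X^{\an}$ by Gauss points of irreducible components of special fibers of models, the relevant model being obtained by blowing up the center of $x$.

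First I would reduce to the case that $X$ is proper. By Nagata compactification followed by normalization, $X$ is a dense open subscheme of a normal integral proper $K$-scheme $\overline{X}$ with the same function field, and $X^{\an}$ is an open subspace of $\overline{X}^{\an}$. A divisorial point of $\overline{X}$ whose underlying valuation of $K(X)=K(\overline{X})$ lies in the open $X^{\an}$ is automatically a divisorial point of $X$, since by \eqref{sss-divpointexpl} being divisorial depends only on the valuation of the function field. Hence it suffices to treat proper $X$, where $X^{\an}=\widehat{\cX}_\eta$ for every proper $R$-model $\cX$.

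The technical heart is to show that the \emph{tubes}
$$\red_{\cX}^{-1}\big(\overline{\{\red_{\cX}(x)\}}\big),$$
as $\cX$ ranges over the proper $R$-models of $X$, form a neighbourhood basis of $x$. Each of these sets is open by anti-continuity of $\red_{\cX}$ and contains $x$. That they can be made arbitrarily small is essentially the Riemann--Zariski description of Proposition \ref{prop-ZR}: the topology on $X^{\an}$ is the quotient topology induced from $X^{\mathrm{ZR}}=\varprojlim_{\cX}\cX_k$ by the retraction $r$, and, as recorded in the proof of Corollary \ref{cor-ZR}, the closure of $j(x)$ in $X^{\mathrm{ZR}}$ is $\bigcap_{\cX}p_{\cX}^{-1}\big(\overline{\{\red_{\cX}(x)\}}\big)$, so refining the model shrinks the tube around $x$. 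I expect this step — quantifying how fine the reduction tubes can be forced to become around a fixed point — to be the main obstacle, and it is exactly where the cofinality and formal-model input underlying Proposition \ref{prop-ZR} is genuinely used.

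Granting this, fix a neighbourhood $U$ of $x$ and choose a proper $R$-model $\cX$ with $\red_{\cX}^{-1}\big(\overline{\{\xi\}}\big)\subseteq U$, where $\xi=\red_{\cX}(x)\in\cX_k$. If $\xi$ is already the generic point of an irreducible component $E$ of $\cX_k$, then the divisorial point associated to $(\cX,E)$ is the unique point of $\red_{\cX}^{-1}(\xi)\subseteq U$ and we are done. Otherwise, let $h\colon\cX'\to\cX$ be (the normalization of) the blow-up of $\cX$ along the reduced closure $\overline{\{\xi\}}\subseteq\cX_k$, and let $E'$ be a component of the exceptional divisor, with associated divisorial point $y_{(\cX',E')}$. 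Since $h$ is proper and $h(E')=\overline{\{\xi\}}$, the center of $v_{E'}$ on $\cX$ is $\xi$; hence by the description of the reduction map in \eqref{sss-model} we have $\red_{\cX}(y_{(\cX',E')})=\xi$, so $y_{(\cX',E')}$ lies in the tube and therefore in $U$. This exhibits a divisorial point in every neighbourhood of $x$, proving that $\mathrm{Div}(X)$ is dense. As a variant worth noting, the same tube argument, applied with $\overline{\{\xi\}}$ replaced by an intersection of several exceptional components, yields density of monomial points, after which density of divisorial points also follows from Proposition \ref{prop-rank1}: a monomial valuation $v_{\alpha}$ is the limit, on every function $f$, of the monomial valuations $v_{\alpha'}$ with $\alpha'\to\alpha$ rational and $\sum_i\alpha'_iN_i=1$, because $v_{\alpha}(f)=\min\{\alpha\cdot\beta\}$ depends continuously on $\alpha$, and each rational $\alpha'$ gives a divisorial point.
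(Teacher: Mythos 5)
Your strategy is genuinely different from the paper's, but the step you yourself flag as the main obstacle --- that the tubes $\red_{\cX}^{-1}\bigl(\overline{\{\red_{\cX}(x)\}}\bigr)$ form a neighbourhood basis of $x$ --- is a real gap as written, and the justification you offer is not a valid inference. From the quotient-topology statement and the identity $\overline{\{j(x)\}}=\bigcap_{\cX}p_{\cX}^{-1}\bigl(\overline{\{\red_{\cX}(x)\}}\bigr)$ you conclude that ``refining the model shrinks the tube around $x$.'' But the tubes are \emph{open} subsets of $X^{\an}$, and a filtered decreasing family of open sets whose intersection is $\{x\}$ need not be a neighbourhood basis at $x$, even in a compact Hausdorff space: in $[0,1]$ the open sets $V_n=[0,1/n)\cup(1/2-1/n,1/2)$ decrease and have intersection $\{0\}$, yet none of them is contained in $[0,1/4)$. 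What rescues your claim is compactness applied to \emph{closed} sets, which forces you into $X^{\mathrm{ZR}}$: there the sets $T_{\cX}=p_{\cX}^{-1}\bigl(\overline{\{\red_{\cX}(x)\}}\bigr)$ are closed (the projections $p_{\cX}$ are continuous), the family $\{T_{\cX}\}$ is filtered, $X^{\mathrm{ZR}}$ is quasi-compact, and $\bigcap_{\cX}T_{\cX}=\overline{\{j(x)\}}\subseteq r^{-1}(x)$ is disjoint from the closed set $r^{-1}(X^{\an}\smallsetminus U)$ (using that $X^{\an}\smallsetminus U$ is closed, $r$ is continuous, and $r\circ j=\mathrm{id}$). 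Hence some single $T_{\cX}$ is disjoint from $r^{-1}(X^{\an}\smallsetminus U)$, and applying $j^{-1}$ puts the tube inside $U$. This is exactly the mechanism of the paper's proof of Corollary \ref{cor-ZR}, with $Z^{\an}$ replaced by $r^{-1}(X^{\an}\smallsetminus U)$; the claim is true, but it must be proved this way --- citing Proposition \ref{prop-ZR} alone does not do it. A second, minor, issue: in the blow-up step you need $E'$ to be an exceptional component \emph{dominating} $\overline{\{\xi\}}$; not every component of the exceptional divisor of a blow-up along an irreducible (possibly singular) center surjects onto the center, but at least one does, since its image is closed, irreducible, contained in $\overline{\{\xi\}}$, and contains $\xi$. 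With that choice, its generic point maps to $\xi$ and the rest of your argument goes through.

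It is worth noting that the paper's own proof sidesteps both difficulties. It shows that for each proper model $\cX$ and each generic point $\xi$ of $\cX_k$ the fiber $p_{\cX}^{-1}(\xi)$ is a single point $\xi^{\mathrm{ZR}}$ (Zariski's Main Theorem, using normality of $\cX$), that these points are dense in $X^{\mathrm{ZR}}$ for the limit topology, and that the continuous surjection $r$ sends $\xi^{\mathrm{ZR}}$ to the divisorial point of $(\cX,E)$, $E=\overline{\{\xi\}}$; density then transfers through $r$, with no neighbourhood-basis claim and no blow-ups. In fact your own framework admits the same shortcut: $r^{-1}(U)$ is open in the limit topology and contains $j(x)$, hence contains a basic open $p_{\cX}^{-1}(V)$ with $V$ open in $\cX_k$ (cofilteredness lets you combine finitely many models into one); applying $j^{-1}$ gives $\red_{\cX}^{-1}(V)\subseteq U$, and $V$, being a nonempty open of $\cX_k$, contains a generic point of $\cX_k$, whose unique preimage under $\red_{\cX}$ is a divisorial point lying in $U$. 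This replaces your compactness step and your blow-up step simultaneously.
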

\begin{proof}
 If $j\colon X\to \overline{X}$ is a normal compactification of $X$, then
 $j^{\an}\colon X^{\an}\to \overline{X}^{\an}$ is an open immersion of
 $K$-analytic spaces. It identifies the divisorial points on
 $X^{\an}$ and $\overline{X}^{\an}$: every $R$-model for $X$ can
 be extended to an $R$-model of $\overline{X}$ by gluing $\overline{X}$ to its generic
 fiber, which shows that the image of a divisorial point in
 $X^{\an}$ is divisorial in $\overline{X}^{\an}$.
 Conversely, if $x$ is a divisorial point on $\overline{X}^{\an}$
 associated to an $R$-model $\overline{\cX}$ of $\overline{X}$ and an irreducible
 component $E$ of $\overline{\cX}_k$, then by removing the
 closure of $\overline{X}\smallsetminus X$ we get a
 divisorial presentation of $x$ as a point of $X^{\an}$.
 Thus we may assume that $X$
 is proper over $K$.

 We denote by $X^{\mathrm{ZR}}$ the Zariski Riemann space of $X$ as defined in Section \ref{subsec-ZR}. Let $\cX$ be a proper $R$-model of $X$
 and denote by $p\colon X^{\mathrm{ZR}}\to \cX_k$ the projection morphism. Then
 for every generic point $\xi$ of $\cX_k$, the fiber $p^{-1}(\xi)$
 consists of a unique point, that we will denote by $\xi^{\mathrm{ZR}}$. To
 see this, note that every morphism of proper $R$-models $\cX'\to
 \cX$ is an isomorphism over an open neighbourhood of $\xi$ by
 Zariski's Main Theorem, because $\cX$ is normal. Since $X^{\mathrm{ZR}}$
 carries the limit topology, the points of the form $\xi^{\mathrm{ZR}}$
 form a dense subset of $X^{\mathrm{ZR}}$ as $\cX$ varies over the class of proper $R$-models of $X$. To conclude the proof, it
 suffices to observe that the image of $\xi^{\mathrm{ZR}}$ under the retraction $r\colon X^{\mathrm{ZR}}\to  X^{\an}$ is the divisorial point $x$ on $X^{\an}$
 associated to $\cX$ and the closure of $\xi$. Indeed,
 $p(\xi^{\mathrm{ZR}})=\xi$ must lie in the closure of $(\red_{\cX}\circ r)(\xi^{\mathrm{ZR}})$ by Proposition \ref{prop-ZR}. Thus
  $\red_{\cX}(r(\xi^{\mathrm{ZR}}))=\xi$, but $x$ is the only element in $\red_{\cX}^{-1}(\xi)$.
\end{proof}

\section{The Berkovich skeleton of an
$sncd$-model}\label{sec-Berk}
\subsection{Definition of the Berkovich skeleton}
\sss In \cite{berk-contract}, Berkovich associates a skeleton
 to a certain class of formal schemes $\mX$ over $R$,
the so-called pluristable formal $R$-schemes. This skeleton is a
closed subspace of the generic fiber $\mX_\eta$ of $\mX$, and
 Berkovich has shown
 that it is a strong deformation
retract of $\mX_\eta$. This construction was a crucial ingredient
of his proof of the local contractibility of smooth analytic
spaces over a non-archimedean field. The special case of strictly
semi-stable formal $R$-schemes is described in detail in
\cite[\S3]{ni-sing}. If $\cX$ is a regular separated $R$-scheme of
finite type such that $\cX_k$ is a divisor with strict normal
crossings, then its $\frak{m}$-adic completion $\widehat{\cX}$ is
not pluristable if $\cX_k$ is not reduced. Nevertheless, the
definition of the skeleton can be generalized to this setting in a
fairly straightforward way. In this section, we explain the
construction of the skeleton and we prove some basic properties
that we will need further on.

\sss Let $X$ be a connected regular separated $K$-scheme of finite
type and let $\cX$ be an $sncd$-model for $X$. Then the reduced
special fiber $(\cX_k)_{\red}$ is a strictly semi-stable
$k$-variety, to which one can associate an unoriented simplicial
set $\Delta(\cX_k)$ in a standard way: see for instance
\cite[\S3.1]{ni-sing}. If $\cX_k$ is a curve, then $\Delta(\cX_k)$
is simply the unweighted dual graph of $\cX_k$. The geometric
realization $|\Delta(\cX_k)|$ is a compact simplicial space whose
points can be uniquely represented by couples $(\xi,w)$ where
$\xi$ is a generic point of an intersection
 of $r$ distinct irreducible components of
$\cX_k$, for some $r>0$, and $w$ is an element of the open simplex
 $$\Delta^o_{\xi}=\{x\in \R^{\Psi(\xi)}_{>0}\,|\,\sum_{i\in \Psi(\xi)}x_i=1\}.$$
Here $\Psi(\xi)$ denotes the set of irreducible components of
$\cX_k$ that pass through $\xi$. In \cite[\S3.2]{ni-sing}, we
called $(\xi,w)$ the
 barycentric representation of the point, and $w$ the tuple of barycentric coordinates.

\sss \label{sss-Bsk} We define a map
$$\Sk_{\cX}\colon |\Delta(\cX_k)|\to X^{\an}$$
by sending a point $P$ with barycentric representation $(\xi,w)$
as above to the monomial point on $X^{\an}$ associated to the
$sncd$-triple $(\cX,(E_1,\ldots,E_r),\xi)$ and the tuple
$$\alpha=(\frac{w_{E_1}}{N_1},\ldots,\frac{w_{E_r}}{N_r})  \in \R^r_{>0}$$
where $E_1,\ldots,E_r$ are the irreducible components of $\cX_k$
 passing through $\xi$ and $N_1,\ldots,N_r$ are their multiplicities in $\cX_k$.
 We call the image of $\Sk_{\cX}$ the Berkovich skeleton of
$\cX$, and denote it by $\Sk(\cX)$. We endow  it with the induced
topology from $X^{\an}$. Beware that it not only depends on $X$,
but also on the chosen $sncd$-model $\cX$. Note that the Berkovich
skeleton of $\cX$ is contained in $\widehat{\cX}_\eta$, and that a
monomial point on $X^{\an}$ lies on $\Sk(\cX)$ if and only if the
$sncd$-model $\cX$ is adapted to $x$,  in the sense of
\eqref{sss-mondef2}. It is clear from the definition that, for
every open subscheme $\cY$ of $\cX$, the skeleton $\Sk(\cY)\subset
\cY_K^{\an}$ is equal to the closed subset
$\red_{\cX}^{-1}(\cY_k)$ of $\Sk(\cX)$.

\begin{prop}\label{prop-Bskhomeo}
The map $\Sk_{\cX}\colon |\Delta(\cX_k)|\to \Sk(\cX)$ is a
homeomorphism.
\end{prop}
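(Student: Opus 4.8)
The plan is to verify that $\Sk_{\cX}$ is a continuous bijection from the compact space $|\Delta(\cX_k)|$ onto $\Sk(\cX)$, and then to upgrade this to a homeomorphism using that $X^{\an}$ is Hausdorff.

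First I would establish bijectivity. Surjectivity is immediate, since $\Sk(\cX)$ is by definition the image of $\Sk_{\cX}$. For injectivity, recall that every point of $|\Delta(\cX_k)|$ has a unique barycentric representation $(\xi,w)$ with $w$ in the open simplex $\Delta^o_\xi$, so it suffices to recover $(\xi,w)$ from the associated monomial point $x=\Sk_{\cX}(\xi,w)$. Since $w\in\Delta^o_\xi$ all barycentric coordinates are strictly positive, hence so are all entries of the tuple $\alpha=(w_{E_1}/N_1,\ldots,w_{E_r}/N_r)$. By the analysis in \eqref{sss-reducemon}, the center $\xi'=\red_{\cX}(x)$ then equals $\xi$, the components passing through $\xi$ are determined by $\cX$, and $\alpha_i=v_x(E_i)$; consequently $w_{E_i}=N_i\, v_x(E_i)$ is recovered from $x$ and $\cX$ alone. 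Thus distinct points of $|\Delta(\cX_k)|$ have distinct images.

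Next I would prove continuity. The topology on $X^{\an}$ is generated by the functions $y\mapsto |f(y)|$ for $f$ ranging over local sections of $\mathcal{O}_X$, so it is enough to check that $P\mapsto |f(\Sk_{\cX}(P))|$ is continuous on $|\Delta(\cX_k)|$ for every such $f$. Because the geometric realization carries the weak topology with respect to the characteristic maps of its (finitely many) simplices, it suffices to verify continuity on each closed simplex. On the closed simplex attached to $\xi$ the map is $w\mapsto \exp(-v_\alpha(f))$ with $\alpha=(w_{E_1}/N_1,\ldots,w_{E_r}/N_r)$ depending affinely on $w$. Writing an admissible expansion $f=\sum_\beta c_\beta z^\beta$ as in \eqref{eq-adm}, Proposition \ref{prop-qmon} together with \eqref{eq-minimum} gives $v_\alpha(f)=\min\{\alpha\cdot\beta\,|\,\beta\in\Gamma_c,\ c_\beta\neq 0\}$, a minimum of finitely many linear forms in $\alpha$; this is continuous (indeed concave and piecewise affine) in $\alpha$, and hence in $w$. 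Finally, the characteristic maps of a simplex and of one of its faces agree after gluing: on the face $w_{E_i}=0$ one has $\alpha_i=0$, and \eqref{sss-reducemon} identifies the resulting monomial point with the one produced by the smaller $sncd$-triple indexing the adjacent simplex. Thus $\Sk_{\cX}$ is well defined on the quotient and continuous on each closed cell, hence continuous.

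It then remains to upgrade the continuous bijection to a homeomorphism. Since $|\Delta(\cX_k)|$ is compact and $X^{\an}$ is Hausdorff (as $X$ is separated), the subspace $\Sk(\cX)$ is Hausdorff and the map $\Sk_{\cX}$, being a continuous bijection from a compact space onto a Hausdorff space, is automatically closed and therefore a homeomorphism. I expect the main obstacle to be the continuity step, and within it the compatibility across faces: one must match the formula for $v_\alpha$ on a simplex with the formula on its boundary faces, where the center $\xi$ jumps to a larger stratum and some entries of $\alpha$ vanish. This is exactly controlled by the reduction procedure in \eqref{sss-reducemon}, which guarantees that the two descriptions of the monomial point coincide.
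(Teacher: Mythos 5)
Your proof is correct and follows essentially the same route as the paper's: injectivity via \eqref{sss-reducemon}, continuity of $\Sk_{\cX}$ reduced to continuity of $v_{\alpha}(f)$ in $\alpha$ (which you rightly justify through admissible expansions and \eqref{eq-minimum}), and the compact-to-Hausdorff upgrade. The only organizational difference is that the paper first reduces to affine $\cX$ by the finite closed cover $|\Delta(\cU_k)|$ so that only global regular functions on $\cU_K$ need testing, whereas you test local sections simplex-by-simplex and verify the face compatibility directly; these amount to the same argument.
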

\begin{proof}
We have seen in \eqref{sss-reducemon} that $\xi$ and $w$ are
completely determined by $\cX$ and $\Sk_{\cX}(P)$. Thus
$\Sk_{\cX}$ is injective. Since $|\Delta(\cX_k)|$ is compact and
$X^{\an}$ is Hausdorff, it suffices to show that $\Sk_{\cX}$ is
continuous.

 Let $\cU$ be an open subscheme of $\cX$, and denote by
 $\Delta(\cU_k)$ the simplicial set associated to the
 semi-stable $k$-variety $(\cU_k)_{\red}$. Then $\widehat{\cU}_\eta$ is a closed analytic domain in $\widehat{\cX}_\eta$. The open immersion
 $\cU_k\to \cX_k$ induces a closed embedding $|\Delta(\cU_k)|\to
 |\Delta(\cX_k)|$, and
 $\Sk_{\cX}$ coincides with $\Sk_{\cU}$ on $|\Delta(\cU_k)|$. If we cover $\cX$ by finitely many affine open subschemes $\cU$, then
 the spaces $|\Delta(\cU_k)|$ will form a closed cover of $|\Delta(\cX_k)|$.
 Thus we
 may assume that $\cX$ is affine.

By definition of the Berkovich topology on $X^{\an}$, it is enough
to prove that for every regular function $f$ on $X=\cX_K$ the map
$$|\Delta(\cX_k)|\to \R_{\geq 0},\, x\mapsto |f(\Sk_{\cX}(x))|$$ is
continuous. This follows at once from the fact that the
quasi-monomial valuation $v_{\alpha}$ in Proposition
\ref{prop-qmon} is continuous in the parameters $\alpha$.
\end{proof}

\sss \label{sss-rho}
 We can construct a continuous
retraction
$$\rho_{\cX}\colon \widehat{\cX}_\eta\to \Sk(\cX)$$ for the inclusion $\Sk(\cX)\to \widehat{\cX}_\eta$ as follows. Let
$x$ be a point of $\widehat{\cX}_\eta$, and denote by $\zeta$ its
image under the reduction map
$$\red_{\cX}\colon \widehat{\cX}_\eta\to \cX_k.$$ Let $E_1,\ldots,E_r$ be the
irreducible components of $\cX_k$ that pass through $\zeta$.
 Let $\xi$ be the generic
point of the connected component of $E_1\cap\ldots\cap E_r$  that
 contains $\zeta$, and set $$w_{E_i}= v_x(E_i)$$ for all $i$. Then
$\rho_{\cX}(x)$ is the point with barycentric representation
$(\xi,w)$, where we used the map $\Sk_{\cX}$ to identify
$|\Delta(\cX_k)|$ and $\Sk(\cX)$. In other words, $\rho_{\cX}(x)$
is the monomial point on $X^{\an}$ represented by
$(\cX,(E_1,\ldots,E_r),\xi)$ and $w=(w_{E_1},\ldots,w_{E_r})$.
This is the unique monomial point $y$ on $X^{\an}$ such that $\cX$
is adapted to $x$, $v_y(E)=v_x(E)$ for every prime component $E$
of $\cX_k$, and $\red_{\cX}(x)$ belongs to the closure of
$\red_{\cX}(y)$. It is straightforward to check that $\rho_{\cX}$
is continuous and right inverse to the inclusion $\Sk(\cX)\to
\widehat{\cX}_\eta$.

\begin{prop}\label{prop-monmin}
Let $\cX$ be an $sncd$-model for $X$ and let $x$ be a point of
$\widehat{\cX}_\eta$. Then
$$|f(x)|\leq |f(\rho_{\cX}(x))|$$ for every $f$ in
$\mathcal{O}_{\cX,\red_{\cX}(x)}$.
\end{prop}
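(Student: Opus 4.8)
\emph{Proof proposal.} The plan is to translate the claimed inequality of absolute values into an inequality of valuations and then to compare $v_x$ with the monomial valuation attached to $\rho_{\cX}(x)$ term by term on a well-chosen admissible expansion. Write $\zeta=\red_{\cX}(x)$, let $E_1,\ldots,E_r$ be the components of $\cX_k$ through $\zeta$, let $\xi$ be the generic point of the component of $E_1\cap\cdots\cap E_r$ through $\zeta$, and choose $z_1,\ldots,z_r\in\mathcal{O}_{\cX,\zeta}$ cutting out $E_1,\ldots,E_r$ locally; since $\cX$ is an $sncd$-model these form part of a regular system of parameters, and $\mathfrak{p}_\xi=(z_1,\ldots,z_r)$ in the regular local ring $\mathcal{O}_{\cX,\zeta}$. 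By the construction of $\rho_{\cX}$, the point $\rho_{\cX}(x)$ is the monomial point attached to $(\cX,(E_1,\ldots,E_r),\xi)$ and the weights $w_i=v_x(E_i)=v_x(z_i)$, which are all strictly positive because each $E_i$ passes through $\zeta$. Setting $m:=v_{\rho_{\cX}(x)}(f)$, the assertion $|f(x)|\leq|f(\rho_{\cX}(x))|$ is equivalent to $v_x(f)\geq m$ (the case $f=0$ being trivial), and this is what I would establish. I would also record that, since $x\in\widehat{\cX}_\eta$ reduces to $\zeta$, there is a local homomorphism $\mathcal{O}_{\cX,\zeta}\to\mathscr{H}(x)^o$, so $v_x(g)\geq 0$ for every $g\in\mathcal{O}_{\cX,\zeta}$.

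The decisive point is to produce an admissible expansion of $f$ whose coefficients can actually be evaluated at $x$. A generic admissible expansion has coefficients in $\widehat{\mathcal{O}}_{\cX,\xi}$, which are not evaluable at $x$ (whose center is the more special point $\zeta$), so instead I would invoke Lemma \ref{lemm-admiss} with $A=\mathcal{O}_{\cX,\xi}$, $B=\mathcal{O}_{\cX,\zeta}$ and $y_i=z_i$. Here $\mathfrak{m}_\xi=(z_1,\ldots,z_r)$ and $B\cap\mathfrak{m}_\xi=\mathfrak{p}_\xi=(z_1,\ldots,z_r)$ is generated by the $z_i$, so the hypotheses hold. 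This produces, in $\widehat{\mathcal{O}}_{\cX,\xi}$, an expansion $f=\sum_\beta c_\beta z^\beta$ in which each $c_\beta$ is either zero or a unit of $\mathcal{O}_{\cX,\xi}$ lying in $\mathcal{O}_{\cX,\zeta}$. This expansion is admissible, so Proposition \ref{prop-qmon} yields $m=\min\{w\cdot\beta\,:\,c_\beta\neq 0\}$; crucially, every $c_\beta$ now lies in $\mathcal{O}_{\cX,\zeta}$, hence has $v_x(c_\beta)\geq 0$.

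I would then split off a finite part and a tail. For each $M$ put $P_M=\sum_{|\beta|<M}c_\beta z^\beta\in\mathcal{O}_{\cX,\zeta}$ (where $|\beta|=\sum_i\beta_i$) and $g_M=f-P_M$. Each term satisfies $v_x(c_\beta z^\beta)=v_x(c_\beta)+w\cdot\beta\geq w\cdot\beta\geq m$, so $v_x(P_M)\geq m$. For the tail, $g_M\in\mathcal{O}_{\cX,\zeta}$ and $g_M\in\mathfrak{m}_\xi^M\widehat{\mathcal{O}}_{\cX,\xi}$; intersecting with $\mathcal{O}_{\cX,\xi}$ (faithful flatness of completion gives $\mathfrak{m}_\xi^M\widehat{\mathcal{O}}_{\cX,\xi}\cap\mathcal{O}_{\cX,\xi}=\mathfrak{m}_\xi^M$) and then with $\mathcal{O}_{\cX,\zeta}$ shows $g_M\in\mathfrak{p}_\xi^{(M)}$, the $M$-th symbolic power. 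Because $\mathfrak{p}_\xi$ is generated by part of a regular system of parameters, its symbolic and ordinary powers coincide, so $g_M\in(z_1,\ldots,z_r)^M$ and we may write $g_M=\sum_{|\beta|=M}a_\beta z^\beta$ with $a_\beta\in\mathcal{O}_{\cX,\zeta}$. Hence $v_x(g_M)\geq\min_{|\beta|=M}w\cdot\beta\geq M\min_i w_i\to\infty$. Taking $M$ large enough that $M\min_i w_i\geq m$ and applying the ultrametric inequality gives $v_x(f)\geq\min(v_x(P_M),v_x(g_M))\geq m$, as required.

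I expect the main obstacle to be precisely the mismatch between the point $x$, whose valuation only evaluates elements of $\mathcal{O}_{\cX,\zeta}$, and the admissible expansions of Proposition \ref{prop-qmon}, whose coefficients a priori live in the completion $\widehat{\mathcal{O}}_{\cX,\xi}$. The device of applying Lemma \ref{lemm-admiss} with $B=\mathcal{O}_{\cX,\zeta}$ (to keep the coefficients evaluable at $x$), combined with the identification of the truncation remainder with a symbolic power of $\mathfrak{p}_\xi$ and the regularity input making that symbolic power ordinary, is what bridges the gap; once that is in place, the remaining estimates are the routine ultrametric computations indicated above.
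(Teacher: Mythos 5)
Your proof is correct and follows essentially the same route as the paper: both reduce to the valuation inequality, apply Lemma \ref{lemm-admiss} with $B=\mathcal{O}_{\cX,\red_{\cX}(x)}$ to get an admissible expansion whose coefficients are evaluable at $x$, bound the finite part termwise using $v_x(c_\beta)\geq 0$ and $v_x(z_i)=v_{\rho_{\cX}(x)}(z_i)$, and dispose of a tail lying deep in $(z_1,\ldots,z_r)$. The only difference is that you justify the tail estimate explicitly (via $\mathfrak{m}_\xi^M\widehat{\mathcal{O}}_{\cX,\xi}\cap\mathcal{O}_{\cX,\zeta}=\mathfrak{p}_\xi^{(M)}=\mathfrak{p}_\xi^M$, using that symbolic and ordinary powers agree for a prime generated by part of a regular system of parameters), a step the paper's proof asserts without detail when it writes $f=\sum_{\beta\in S}c_\beta z^\beta+g$ with $g$ small at both points.
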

\begin{proof}
 Let $E_1,\ldots,E_r$ be the irreducible components of
$\cX_k$ passing through $\red_{\cX}(x)$, and set
$\alpha_i=v_x(E_i)$ for all $i$. Denote by $\xi$ the generic point
of the connected component of $\cap_{i=1}^rE_i$ containing
$\red_{\cX}(x)$. Then $\rho_{\cX}(x)$ is the monomial point of
$X^{\an}$ represented by $(\cX,(E_1,\ldots,E_r),\xi)$ and
$\alpha$, and $\xi=\red_{\cX}(\rho_{\cX}(x))$.

 Let $z_i=0$ be a local equation for $E_i$ on $\cX$ at $\red_{\cX}(x)$, for all $i$. Then $(z_1,\ldots,z_r)$ is a regular system of local parameters in
 $\mathcal{O}_{\cX,\xi}$.
 By Lemma \ref{lemm-admiss}, one can always find an admissible expansion $$f=\sum_{\beta\in
\N^r}c_{\beta}z^{\beta}$$ for $f$ in
$\widehat{\mathcal{O}}_{\cX,\xi}$ such that all the coefficients
$c_{\beta}$ belong to $\mathcal{O}_{\cX,\red_{\cX}(x)}$.
 We have $|h(x)|\leq 1$ and $|h(\rho_{\cX}(x))|\leq 1$ for every
 function $h$ in $\mathcal{O}_{\cX,\red_{\cX}(x)}\subset
 \mathcal{O}_{\cX,\xi}$.
Since, moreover, $|z_i(x)|=|z_i(\rho_{\cX}(x))|<1$ for all $i$ in
$\{1,\ldots,r\}$,
 we can rewrite the expression for $f$ as
 $$f=\sum_{\beta\in
S}c_{\beta}z^{\beta}+g$$ where $S$ is a finite subset of $\N^r$
and $g$ is an element of $\mathcal{O}_{\cX,\red_{\cX}(x)}$ such
that $|g(x)|<|f(x)|$ and $|g(\rho_{\cX}(x))|<|f(\rho_{\cX}(x))|$.

Then we have
\begin{eqnarray*} |f(x)|&\leq&
\max\{|c_{\beta}(x)|\cdot |z^{\beta}(x)|\ |\,\beta\in S\}
\\ &\leq &\max\{|z^{\beta}(x)|\ |\,\beta\in S\}
\\ &=&|(f-g)(\rho_{\cX}(x))|
\\ &=&|f(\rho_{\cX}(x))|.
\end{eqnarray*}
\end{proof}

\begin{prop}\label{prop-domin}
If $h\colon \cX'\to \cX$ is an $R$-morphism of $sncd$-models of $X$,
then
 $\widehat{\cX'}_\eta\subset \widehat{\cX}_\eta$ and $$(\rho_{\cX}\circ \rho_{\cX'})(x)=\rho_{\cX}(x)$$ for every point $x$ in $\widehat{\cX'}_\eta$.
  If $h$ is proper, then $\widehat{\cX'}_\eta= \widehat{\cX}_\eta$
  and $\Sk(\cX)\subset \Sk(\cX')$.
\end{prop}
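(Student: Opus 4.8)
The plan is to handle the two inclusions of generic fibres, the retraction identity, and the inclusion of skeleta in turn, leaning on the uniqueness characterisation of $\rho_{\cX}$ recorded in \eqref{sss-rho} and on the minimality inequality of Proposition \ref{prop-monmin}.

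\textbf{Generic fibres.} For $\widehat{\cX'}_\eta\subseteq \widehat{\cX}_\eta$ I would argue from the valuative description in \eqref{sss-model}: if $x\in \widehat{\cX'}_\eta$, then $\Spec\mathscr{H}(x)\to X$ extends to $\Spec\mathscr{H}(x)^o\to \cX'$, and composing with $h$ (which is the identity on generic fibres) gives an extension to $\cX$, so $x\in\widehat{\cX}_\eta$. When $h$ is proper the reverse inclusion follows from the valuative criterion of properness for $h$ and the valuation ring $\mathscr{H}(x)^o$: given $x\in\widehat{\cX}_\eta$, the generic point lands in $X=\cX'_K\subset\cX'$ and the extension to $\cX$ lifts uniquely across $h$ to $\Spec\mathscr{H}(x)^o\to\cX'$. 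Two facts used below are recorded here: for $x\in\widehat{\cX'}_\eta$ one has $\red_{\cX}(x)=h(\red_{\cX'}(x))$ by functoriality of the reduction map, and the pull-back identity $v_x(D)=v_x(h^*D)$ holds for \emph{every} morphism of regular models, not only proper ones, because $v_x$ is a valuation on $K(X)$ and $D$, $h^*D$ are cut out by the same rational function near the respective centres.

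\textbf{The retraction identity.} Fix $x\in\widehat{\cX'}_\eta$ and set $y=\rho_{\cX'}(x)$. By \eqref{sss-rho}, $\rho_{\cX}(y)$ is the unique monomial point $z$ with $\cX$ adapted to $z$, with $v_z(E)=v_y(E)$ for every prime component $E$ of $\cX_k$, and with $\red_{\cX}(y)$ in the closure of $\red_{\cX}(z)$; so it suffices to verify these for $z=\rho_{\cX}(x)$. The first is clear. For the value condition, write the vertical divisor $h^*E=\sum_j m_jE'_j$ in terms of prime components of $\cX'_k$ and use $\Z$-linearity, the pull-back identity for both $x$ and $y$, and $v_y(E'_j)=v_x(E'_j)$ (the defining property of $y$):
\[
v_y(E)=v_y(h^*E)=\sum_j m_j v_x(E'_j)=v_x(h^*E)=v_x(E)=v_{\rho_{\cX}(x)}(E).
\]
For the reduction condition set $\eta''=\red_{\cX}(y)=h(\red_{\cX'}(y))$. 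As $\eta''$ is the centre of $v_y$ on $\cX$, a component $E$ of $\cX_k$ passes through $\eta''$ exactly when $v_y(E)>0$; by the value condition these are precisely the components through $\red_{\cX}(x)$, and $\xi=\red_{\cX}(\rho_{\cX}(x))$ is the generic point of the connected component of their intersection containing $\red_{\cX}(x)$. Thus $\eta''\in\bigcap_iE_i$. Applying $h$ to the relation $\red_{\cX'}(x)\in\overline{\{\red_{\cX'}(y)\}}$ gives $\red_{\cX}(x)\in\overline{\{\eta''\}}$, so $\overline{\{\eta''\}}$ is a connected subset of the (regular, hence locally irreducible) intersection $\bigcap_iE_i$ meeting $\overline{\{\xi\}}$; hence $\eta''\in\overline{\{\xi\}}$. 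Uniqueness then gives $(\rho_{\cX}\circ\rho_{\cX'})(x)=\rho_{\cX}(x)$.

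\textbf{Inclusion of skeleta.} Assume $h$ proper, so $\widehat{\cX'}_\eta=\widehat{\cX}_\eta$ and $\rho_{\cX'}$ is defined on $\Sk(\cX)$. For $y\in\Sk(\cX)$ put $y'=\rho_{\cX'}(y)$; the retraction identity gives $\rho_{\cX}(y')=\rho_{\cX}(y)=y$. I would then sandwich the two valuations. Writing $\zeta=\red_{\cX}(y')$, one has $\mathcal{O}_{\cX,\zeta}\subseteq\mathcal{O}_{\cX,\xi}\subseteq\mathcal{O}_{\cX',\red_{\cX'}(y)}$, the middle inclusion because $\zeta\in\overline{\{\xi\}}$ and the last because $h$ sends $\red_{\cX'}(y)$ to $\xi$. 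Proposition \ref{prop-monmin} applied to $\cX$ at $y'$ gives $v_{y'}(g)\ge v_y(g)$ for $g\in\mathcal{O}_{\cX,\zeta}$, while the same proposition applied to $\cX'$ at $y$ gives $v_y(g)\ge v_{y'}(g)$; hence $v_y=v_{y'}$ on $\mathcal{O}_{\cX,\zeta}$. Since this ring has fraction field $K(X)$, the quotient rule forces $v_y=v_{y'}$, i.e.\ $y=\rho_{\cX'}(y)\in\Sk(\cX')$.

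\textbf{Main obstacle.} The two delicate points are the reduction condition in the retraction identity, where one must pin down the stratum containing $\eta''$ using the centre-of-valuation description of the reduction together with the local irreducibility of strict normal crossings intersections, and the arrangement in the last step of the two applications of Proposition \ref{prop-monmin} on a common local ring with fraction field $K(X)$. I also expect to spend some care making explicit that the pull-back identity $v_x(D)=v_x(h^*D)$ is available for non-proper $h$, since \eqref{sss-model} only asserts it for proper morphisms.
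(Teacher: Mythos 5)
Your proof is correct. The generic-fibre statements and the retraction identity follow essentially the paper's own route: the valuative description of $\widehat{\cX}_\eta$ and the valuative criterion for properness, then the pull-back formula $v_x(E)=v_x(h^*E)$ combined with $\Z$-linearity, the defining property of $\rho_{\cX'}$, and the specialization relation obtained by pushing $\red_{\cX'}(x)\in\overline{\{\red_{\cX'}(y)\}}$ forward under $h$; you are right, and more scrupulous than the paper, to note that the pull-back identity of \eqref{sss-model} (stated there only for proper morphisms) must be, and can be, justified for arbitrary morphisms of regular models, since the first half of the proposition assumes no properness.

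Where you genuinely diverge is the inclusion $\Sk(\cX)\subset\Sk(\cX')$, and your route is both correct and simpler. In your notation ($y\in\Sk(\cX)$, $\xi=\red_{\cX}(y)$, $y'=\rho_{\cX'}(y)$, $\zeta=\red_{\cX}(y')$, $\xi'=\red_{\cX'}(y')$), the paper proves $v_y(f)=v_{y'}(f)$ for all $f\in\mathcal{O}_{\cX,\xi}$ by transporting an admissible expansion of $f$ at $\xi$ into one at $\xi'$; the crux there, namely that distinct exponents $\beta$ give distinct monomials in the parameters at $\xi'$ so that the rewritten series is again admissible, is exactly where properness is used, through the isomorphism $\mathcal{O}_{\cX,\xi}\cong\mathcal{O}(\cX'\times_{\cX}\Spec \mathcal{O}_{\cX,\xi})$ (properness, $R$-flatness, normality, $h_K$ an isomorphism). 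Your sandwich avoids expansions entirely: Proposition \ref{prop-monmin} for $\cX$ at $y'$ gives $v_{y'}\geq v_y$ on $\mathcal{O}_{\cX,\zeta}$ (using the retraction identity $\rho_{\cX}(y')=\rho_{\cX}(y)$ you just proved, together with $\rho_{\cX}(y)=y$ since $\rho_{\cX}$ restricts to the identity on $\Sk(\cX)$), Proposition \ref{prop-monmin} for $\cX'$ at $y$ gives $v_y\geq v_{y'}$ on $\mathcal{O}_{\cX',\red_{\cX'}(y)}\supseteq \mathcal{O}_{\cX,\xi}\supseteq \mathcal{O}_{\cX,\zeta}$, and equality of the two valuations on $\mathcal{O}_{\cX,\zeta}$, whose fraction field is $K(X)$, forces $v_y=v_{y'}$ and hence $y=y'$, because monomial points are birational and $x\mapsto v_x$ is injective on birational points. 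I checked both applications of Proposition \ref{prop-monmin}, the chain of inclusions of local rings, and the absence of circularity (Proposition \ref{prop-monmin} precedes this one and its proof does not use it). Your argument is shorter, sidesteps the unit bookkeeping of the expansion computation, and uses properness only through the valuative criterion, to guarantee $y\in\widehat{\cX'}_\eta$; in fact it establishes the sharper statement $\Sk(\cX)\cap\widehat{\cX'}_\eta\subseteq\Sk(\cX')$ for an arbitrary $R$-morphism of $sncd$-models. The only thing the paper's computation yields beyond the statement is an explicit admissible expansion of $f$ in $\widehat{\mathcal{O}}_{\cX',\xi'}$, which is not reused elsewhere, so nothing is lost by your approach.
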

\begin{proof}
It is obvious that $\widehat{\cX'}_\eta\subset
\widehat{\cX}_\eta$. Let $x$ be a point of $\widehat{\cX'}_\eta$
and set $y=\rho_{\cX'}(x)$. Then $\red_{\cX'}(x)$ lies in the
closure of $\{\red_{\cX'}(y)\}$, which implies the analogous
statement for $\red_{\cX}(x)=h(\red_{\cX'}(x))$ and
$\red_{\cX}(y)=h(\red_{\cX'}(y))$. Thus by definition of the map
$\rho_{\cX}$, it is enough to show that $v_x(E)=v_{y}(E)$ for each
prime component $E$ of $\cX_k$. Writing
$$h^*E=\sum_{i=1}^r \alpha_i E'_i$$ where $E'_1,\ldots,E'_r$ are
prime components of $\cX'_k$, we compute:
$$v_x(E)=\sum_{i=1}^r \alpha_i v_x(E'_i)=\sum_{i=1}^r \alpha_i
v_y(E'_i)=v_y(E)$$ where the second equality follows from the
definition of the map $\rho_{\cX'}$.

Now assume that $h$ is proper. Then
 $\widehat{\cX'}_\eta= \widehat{\cX}_\eta$ by the valuative
 criterion for properness. Let $x$ be a point of
 $\Sk(\cX)$ and set $\xi=\red_{\cX}(x)$,   $x'=\rho_{\cX'}(x)$ and $\xi'=\red_{\cX'}(x')$. To prove that
 $\Sk(\cX)\subset \Sk(\cX')$, we must show that $x=x'$. It
 suffices to show that $v_x(f)=v_{x'}(f)$ for every element $f$ in
 $\mathcal{O}_{\cX,\xi}$.
 We choose a system of coordinates $(z_1,\ldots,z_r)$ and an admissible expansion $$f= \sum_{\beta\in
 \N^r}c_{\beta} z^{\beta}$$ for $f$ as in \eqref{sss-mondef}.
  Let $(z'_1,\ldots,z'_s)$ be a regular system of local parameters
  in $\mathcal{O}_{\cX',\xi'}$ such that there exist a unit $u'$
  in $\mathcal{O}_{\cX',\xi'}$ and positive integers
  $N'_1,\ldots,N'_s$ such that
$$u'\prod_{j=1}^s (z'_j)^{N'_j}$$ is a uniformizer in $R$.
 For each $i$ in $\{1,\ldots,r\}$, we can write
 $$z_i=v_i\prod_{j=1}^s (z'_j)^{\gamma_{ij}}$$ in
 $\mathcal{O}_{\cX',\xi'}$, with $v_i$ a unit and $\gamma_{ij}$
 non-negative integers.
We view
 $\gamma_i=(\gamma_{i1},\ldots,\gamma_{is})$ as a vector in
 $\N^s$, for each $i$.
Let $\beta$ and $\beta'$ be elements of $\N^r$ such that
$$\sum_{i=1}^r \beta_i\gamma_i= \sum_{i=1}^r
\beta'_i\gamma_i.$$ This means that the monomials $z^{\beta}$ and
$z^{\beta'}$ have the same multiplicity along each irreducible
component of $\cX'_k$, and thus that the rational function
$z^{\beta-\beta'}$ on $X$ is a unit in
$$\mathcal{O}_{\cX,\xi}\cong \mathcal{O}(\cX'\times_{\cX}\Spec
\mathcal{O}_{\cX,\xi})$$
 (the isomorphism follows from the fact that $h$ is proper, $\cX$ and $\cX'$
 are $R$-flat,
 $h_K$ is an isomorphism and $\cX$ is normal).
 This can only happen if $\beta=\beta'$.  It follows that
 $$f=\sum_{\beta\in
 \N^r}c_{\beta}v^{\beta} (z')^{\sum_{i=1}^r \beta_i\gamma_i}$$ is an
 admissible expansion for $f$ in
 $\widehat{\mathcal{O}}_{\cX',\xi'}$.
  Thus
 \begin{eqnarray*}
 v_{x'}(f)&=&\min\{\sum_{i,j}
 v_{x'}(z'_j)\beta_i\gamma_{ij}\,|\,\beta\in \N^r,\,c_\beta\neq 0\}
\\ &=&\min\{\sum_{i}
(\sum_j v_{x}(z'_j)\gamma_{ij})\beta_i\,|\,\beta\in
\N^r,\,c_\beta\neq 0\}
\\ &=&\min\{\sum_{i}
 v_{x}(z_i)\beta_i\,|\,\beta\in \N^r,\,c_\beta\neq 0\}
\\ &=&v_x(f).
\end{eqnarray*}
\end{proof}

\sss If $h\colon \cX'\to \cX$ is a proper morphism of $sncd$-models of
$X$, then in general, the skeleton $\Sk(\cX')$ will be strictly
larger than the skeleton $\Sk(\cX)$. The following proposition
shows, however, that the skeleton does not change if we blow up a
connected component of an intersection of irreducible components
of $\cX_k$.

\begin{prop}\label{prop-blup}
Let $\cX$ be an $sncd$-model for $X$, let $E_1,\ldots,E_r$ be
irreducible components of $\cX_k$, and let $\xi$ be a generic
point of $\cap_{i=1}^r E_i$. Denote by $h\colon \cX'\to \cX$ the blow-up
of $\cX$ at the closure of $\{\xi\}$. Then $\Sk(\cX')=\Sk(\cX)$.
More precisely, if we denote by $\sigma$ the face of $\Sk(\cX)$
corresponding to $\xi$, then the simplicial structure on
$\Sk(\cX')$ is obtained by adding a vertex at the barycenter of
$\sigma$, corresponding to the exceptional divisor of $h$, and
joining it to all the faces of $\sigma$.
\end{prop}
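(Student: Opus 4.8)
The plan is to verify the two assertions in turn: first that the two skeleta coincide as subsets of $X^{\an}$, and then that the claimed simplicial refinement is correct. For the equality $\Sk(\cX')=\Sk(\cX)$, the inclusion $\Sk(\cX)\subset \Sk(\cX')$ is immediate from Proposition \ref{prop-domin}, since the blow-up $h$ is proper. So the real content is the reverse inclusion, and for this I would compute the skeleton of $\cX'$ explicitly in terms of the monomial coordinates on $\cX$. First I would reduce to a local, combinatorial computation: replacing $\cX$ by an open neighbourhood of $\xi$ changes neither skeleton, so I may assume $\cX$ is regular with $\cX_k$ an actual strict normal crossings divisor and work in the local ring $\mathcal{O}_{\cX,\xi}$ with a regular system of parameters $z_1,\dots,z_r$ cutting out $E_1,\dots,E_r$ and multiplicities $N_1,\dots,N_r$.

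The key step is to understand the divisors lying over $\xi$ on the blow-up and the monomial valuations they support. On the blow-up of the closure of $\{\xi\}$, the exceptional divisor $E_0$ appears, and the strict transforms $E'_1,\dots,E'_r$ meet it. A standard chart computation shows that $E_0$ has multiplicity $N_0=N_1+\cdots+N_r$ in $\cX'_k$, and that the divisorial valuation $v_{E_0}$ is exactly the monomial valuation associated to $(\cX,(E_1,\dots,E_r),\xi)$ with the barycentric tuple $\alpha_i = (1/N_0)$ for each $i$ — that is, the barycenter of the face $\sigma$. Then I would argue that every monomial point adapted to the new $sncd$-triple $(\cX',(E'_{i_1},\dots,E'_{i_s},E_0),\xi')$ for some generic point $\xi'$ of the relevant intersection is, via the change-of-coordinates $z_i = (\text{unit})\cdot (z'_j)^{\gamma_{ij}}$ already used in the proof of Proposition \ref{prop-domin}, again a monomial point adapted to $\cX$; concretely, the valuations $v_x(E_i)$ for the components of $\cX_k$ still determine $x$, and they range over the whole simplex $\sigma$. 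This gives $\Sk(\cX')\subset \Sk(\cX)$ and hence equality.

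For the refined simplicial statement, I would track the face structure through the same local picture. The faces of $\Sk(\cX')$ meeting the exceptional locus correspond to subsets of $\{E'_1,\dots,E'_r,E_0\}$ whose associated components have nonempty intersection; since blowing up the closure of $\xi$ separates $E_1,\dots,E_r$ (their total transforms no longer all meet except along $E_0$), the admissible subsets are exactly those containing at most all but one of the $E'_i$ together with $E_0$. Translating the barycentric coordinates back through the identification of the vertex for $E_0$ with the barycenter of $\sigma$, these faces are precisely the cones joining the barycenter of $\sigma$ to the proper faces of $\sigma$, which is the asserted barycentric-type subdivision. The main obstacle I expect is the bookkeeping in this last step: correctly identifying which intersections survive on $\cX'$ and matching the resulting combinatorial subdivision of $|\Delta(\cX'_k)|$ with the stated subdivision of $\sigma$, especially when $\xi$ is a higher-codimension stratum so that $\sigma$ is a positive-dimensional simplex and several strict transforms persist.
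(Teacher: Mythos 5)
The paper's own ``proof'' of Proposition \ref{prop-blup} consists of the single sentence \emph{``This follows from a straightforward computation,''} and your proposal supplies exactly that computation along the intended route: the inclusion $\Sk(\cX)\subset\Sk(\cX')$ from Proposition \ref{prop-domin}, the chart computation giving the exceptional multiplicity $N_0=N_1+\cdots+N_r$ and the identification of $v_{E_0}$ with the monomial valuation $\alpha_i=1/N_0$, the observation that $E_0\cap E'_1\cap\cdots\cap E'_r=\emptyset$ while all proper sub-collections meet, and the pullback-of-admissible-expansions argument (as in the proof of Proposition \ref{prop-domin}) showing each face of $\Sk(\cX')$ over the center lands inside $\sigma$ and that these images exhaust $\sigma$ — so the proposal is correct and essentially coincides with what the paper intends. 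One caveat, which your write-up shares with the paper's own wording rather than introduces: the new vertex sits at the point with $v_x(E_i)=1/N_0$ for all $i$, i.e.\ at barycentric coordinates $(N_1/N_0,\ldots,N_r/N_0)$, which is the literal barycenter of $\sigma$ only when all the multiplicities $N_i$ coincide.
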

\begin{proof}
This follows from a straightforward computation.
\end{proof}

\subsection{Piecewise affine structure on the Berkovich
skeleton}\label{ss-pwaff}
 \sss Let $X$ be a connected regular
separated $K$-scheme of finite type, and let $\cX$ be an
$sncd$-model for $X$. We will use the simplicial structure of
$\Delta(\cX_k)$ to define an integral
 piecewise affine structure on the Berkovich skeleton $\Sk(\cX)$.
 We use the homeomorphism $$\Sk_{\cX}\colon |\Delta(\cX_k)|\to
 \Sk(\cX)$$ to identify $\Sk(\cX)$ with the geometric realization
 $|\Delta(\cX_k)|$ of $\Delta(\cX_k)$. Let $U$ be a subset of $\Sk(\cX)$ and consider a function
 $$f\colon U\to \R.$$ We say that $f$ is affine (with respect to the model $\cX$) if the following two conditions are fulfilled.
\begin{enumerate}
\item The function $f$ is continuous. \item For every closed face
$\sigma$ of $\Sk(\cX)$, we can cover $\sigma\cap U$ by open
subsets $V$ of $\sigma\cap U$ such that the restriction of $f$ to
$V$ is an affine
 function with coefficients in $\Z$ in the variables
$$(\frac{w_{E_1}}{N_1},\ldots,\frac{w_{E_r}}{N_r})$$ where
$E_1,\ldots,E_r$ are the irreducible components of $\cX_k$
corresponding to the vertices of $\sigma$, $N_1,\ldots,N_r$ are
their multiplicities in $\cX_k$, and $(w_{E_1},\ldots,w_{E_r})$
are the barycentric coordinates on $\sigma$.
\end{enumerate}
 We say that $f$ is piecewise affine (with respect to the model $\cX$) if $f$ is continuous and if
 we can cover each face $\sigma$ of $\Sk(\cX)$ by finitely
 many polytopes $P$ such that the vertices of $P$ have rational
 barycentric coordinates and the restriction of $f$ to $U\cap P$
 is affine.

\begin{prop}\label{prop-pieceaff1}
Let $\cX$ be an $sncd$-model of $X$, and let $h$ be a non-zero
rational function on $X$. Then the function
$$f_h\colon \Sk(\cX)\to \R,\, x\mapsto v_x(h)$$ is piecewise affine.
 If $\sigma$ is a closed
face of $\Sk(\cX)$ corresponding to a generic point $\xi$ of an
intersection of irreducible components of $\cX_k$, then
 the function $$f_h|_{\sigma}\colon \sigma\to \R,\,x\mapsto v_x(h)$$
 is
\begin{enumerate}
\item \label{item:concave}  concave if $\xi$ is not contained in
the closure of the locus of poles of $h$ on $X$,
 \item \label{item:convex}  convex if $\xi$ is not contained in the
closure of the locus of zeroes of $h$ on $X$,
 \item \label{item:affine} affine if $\xi$ is not contained in
the closure of the locus of zeroes and poles of $h$ on $X$.
\end{enumerate}
\end{prop}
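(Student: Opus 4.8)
The plan is to prove everything one closed face at a time and then glue. Fix a closed face $\sigma$ of $\Sk(\cX)$ corresponding to a generic point $\xi$ of an intersection $E_1\cap\cdots\cap E_r$, with regular parameters $z_1,\ldots,z_r$ and multiplicities $N_1,\ldots,N_r$ as in \eqref{sss-mondef}. A point in the relative interior of $\sigma$ is the monomial point $v_\alpha$ with $\alpha_i=w_{E_i}/N_i$, so in view of the definition of affine and piecewise affine functions in Section \ref{ss-pwaff}, it suffices to understand $\alpha\mapsto v_\alpha(h)$ on the rational simplex $\{\alpha\in\R^r_{>0}:\sum_i\alpha_iN_i=1\}$, which is an affine reparametrization of $\sigma$. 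The key computational input will be Proposition \ref{prop-qmon}: for nonzero $f\in\mathcal{O}_{\cX,\xi}$ with admissible expansion $f=\sum_\beta c_\beta z^\beta$ one has $v_\alpha(f)=\min\{\alpha\cdot\beta:c_\beta\neq 0\}$.

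First I would record the structure of this minimum. The Newton polyhedron $\Gamma=\mathrm{conv}\{\beta:c_\beta\neq 0\}+\R^r_{\geq 0}$ has only finitely many vertices, since its set of minimal lattice points is finite by Dickson's lemma; and because $\alpha$ has strictly positive entries, the minimum of $\alpha\cdot(\cdot)$ over the unbounded polyhedron $\Gamma$ is attained at a vertex. Hence $v_\alpha(f)$ is the minimum of finitely many linear forms with non-negative integer coefficients in $\alpha$; in particular it is a \emph{concave} piecewise affine function of $\alpha$, with the domains of linearity cut out by the rational hyperplanes $\alpha\cdot(\beta-\beta')=0$. This already gives the piecewise affine structure with rational cells on any face, once $h$ is reduced to a regular function.

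Next I would handle the three cases by reducing to regular $f$. Write $h=p/q$ with $p,q$ coprime in the regular local ring $\mathcal{O}_{\cX,\xi}$. The height-one primes through $\xi$ are the vertical ones $(z_1),\ldots,(z_r)$ together with the horizontal divisors through $\xi$; a horizontal divisor through $\xi$ corresponds to a prime divisor of $X$ whose closure contains $\xi$. So if $\xi$ lies outside the closure of the polar locus of $h$ on $X$, then $q$ involves no horizontal prime, whence $q=u\,z_1^{a_1}\cdots z_r^{a_r}$ with $u$ a unit and $a_i\geq 0$. Thus $h':=h\,z_1^{a_1}\cdots z_r^{a_r}\in\mathcal{O}_{\cX,\xi}$ and
\[
v_\alpha(h)=v_\alpha(h')-\sum_i a_i\alpha_i.
\]
As $v_\alpha(h')$ is concave and $\sum_i a_i\alpha_i$ is linear, $v_\alpha(h)$ is concave, giving (1). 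Applying this to $h^{-1}$, whose polar locus on $X$ is the zero locus of $h$, yields $v_\alpha(h)=-v_\alpha(h^{-1})$ convex, giving (2); and when $\xi$ avoids both loci, $v_\alpha(h)$ is simultaneously concave and convex, hence affine, giving (3).

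Finally, for the global piecewise affinity and continuity, I would reduce to an affine open $\cU=\Spec B$ of $\cX$, write $h=f/g$ with $f,g\in B$, and note that on $\Sk(\cU)=\red_{\cX}^{-1}(\cU_k)\cap\Sk(\cX)$ we have $f_h=f_f-f_g$, where each of $f_f,f_g$ is given face-by-face by the concave minimum formula above; these pieces agree on shared faces by the compatibility of monomial points under vanishing coordinates \eqref{sss-reducemon}, and continuity also follows from Proposition \ref{prop-cont}. This exhibits $f_h$ as a continuous difference of functions that are piecewise affine with rational cells on every face, hence piecewise affine on $\Sk(\cX)$. The one point to get right is the translation of the geometric hypotheses on the closures of the zero and polar loci of $h$ on $X$ into the purely vertical shape $q=u\,z_1^{a_1}\cdots z_r^{a_r}$ of the denominator in $\mathcal{O}_{\cX,\xi}$; once that is in place, concavity, convexity, and affineness are immediate consequences of the minimum formula.
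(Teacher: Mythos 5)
Your proof is correct and follows essentially the same route as the paper's: work face by face, use the minimum formula of Proposition \ref{prop-qmon} (equation \eqref{eq-minimum}) to express the valuation of a regular element as a minimum of finitely many integral linear forms in $\alpha$ (hence concave and piecewise affine with rational cells), deduce convexity by applying this to $1/h$ via $f_{1/h}=-f_h$, and obtain affineness as concave plus convex. The only cosmetic differences are your reduction to regular elements via unique factorization in $\mathcal{O}_{\cX,\xi}$ (showing the denominator is a unit times a monomial in $z_1,\ldots,z_r$), where the paper simply multiplies $h$ by a suitable element of $R$, and your use of Dickson's lemma for finiteness where the paper uses the finiteness of lattice points on compact faces of the Newton polyhedron; both are valid.
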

\begin{proof}
Continuity of $f_h$ follows immediately from the definition of the
Berkovich topology. The remainder of the statement can be checked
on the restrictions of $f$ to the faces of the skeleton.  Let
$E_1,\ldots,E_r$ be irreducible components of $\cX_k$, let $\xi$
be a generic point of their intersection and let $\sigma$ be the
closed face of $\Sk(\cX)$ corresponding to $\xi$. We may assume
that $\xi$ does not lie in the closure of the locus of poles of
$h$ on $X$, because $f_{1/h}=-f_{h}$ so that \eqref{item:convex}
follows from \eqref{item:concave}. Multiplying $h$ by a suitable
element in $R$, we can further reduce to the case where $h$
belongs to $\mathcal{O}_{\cX,\xi}$.
 Taking an admissible expansion for $h$ in
 $\widehat{\mathcal{O}}_{\cX,\xi}$, we deduce from
 \eqref{eq-minimum}
 that $f_h|_{\sigma}$ is
 a minimum of finitely many affine functions on $\sigma$ and therefore a concave
 piecewise affine function. Point \eqref{item:affine} now
 follows from \eqref{item:concave} and \eqref{item:convex},
 because a piecewise affine concave and convex function is affine.
\end{proof}

\begin{prop}\label{prop-pieceaff2}
Let $\cX$ and $\cY$ be $sncd$-models of $X$. Then the map
$$\rho_{\cX}|_{\widehat{\cX}_\eta\cap\Sk(\cY)}\colon
\widehat{\cX}_\eta\cap \Sk(\cY)\to \Sk(\cX),\, y\mapsto \rho_{\cX}(y)$$ is
 piecewise affine, in the following sense: if $U$ is a subset of
$\Sk(\cX)$ and $f\colon U\to \R$ is a piecewise affine function on $U$,
then $f\circ \rho_{\cX}$ is a  piecewise affine function on
$\rho^{-1}_{\cX}(U)\cap \Sk(\cY)$.
\end{prop}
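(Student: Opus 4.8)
The plan is to express $f\circ \rho_{\cX}$, locally on $\Sk(\cY)$, as an integral affine combination of the functions $y\mapsto v_y(E)$, where $E$ runs over the prime components of $\cX_k$, and to prove that each of these functions is piecewise affine on $\widehat{\cX}_\eta\cap \Sk(\cY)$. The whole argument hinges on two compatibilities: by the description of $\rho_{\cX}$ in \eqref{sss-rho} and \eqref{sss-reducemon}, the point $\rho_{\cX}(y)$ is the unique monomial point with $v_{\rho_{\cX}(y)}(E)=v_y(E)$ for every prime component $E$ of $\cX_k$; and, by the definition of the piecewise affine structure in Section \ref{ss-pwaff}, the affine coordinates on a closed face $\sigma$ of $\Sk(\cX)$ with vertices $E_1,\ldots,E_r$ are precisely the values $v_{\cdot}(E_i)$. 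Thus $\rho_{\cX}$ is affine in exactly the coordinates we are able to control. Continuity of $f\circ \rho_{\cX}$ is immediate, since $\rho_{\cX}$ is continuous by \eqref{sss-rho} and every piecewise affine function is continuous by definition.

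The key step is to show that, for each prime component $E$ of $\cX_k$, the function $\phi_E\colon y\mapsto v_y(E)$ is piecewise affine on $\widehat{\cX}_\eta\cap \Sk(\cY)$. Since $\cX$ is regular, hence locally factorial, $E$ is locally principal, so I would cover $\cX$ by finitely many affine open subschemes $\cU$ on which $E\cap \cU=\mathrm{div}(h_{\cU})$ for some $h_{\cU}\in K(X)^{\times}$. For $y$ in $\widehat{\cU}_\eta=\red_{\cX}^{-1}(\cU_k)$ the definition in \eqref{sss-model} gives $v_y(E)=v_y(h_{\cU})$, and Proposition \ref{prop-pieceaff1} shows that $y\mapsto v_y(h_{\cU})$ is piecewise affine on all of $\Sk(\cY)$. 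As the closed analytic domains $\widehat{\cU}_\eta\cap \Sk(\cY)$ cover $\widehat{\cX}_\eta\cap \Sk(\cY)$, and the corresponding piecewise affine representatives all agree with the continuous function $\phi_E$ (hence with one another where both are defined), passing to the common refinement of the induced rational polytopal subdivisions on each face of $\Sk(\cY)$ exhibits $\phi_E$ as piecewise affine.

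It then remains to assemble these ingredients. Fix a closed face $\tau$ of $\Sk(\cY)$. Using the key step, I would subdivide $\tau$ into finitely many rational polytopes $Q$ such that every $\phi_E|_Q$ is affine and is either identically zero or strictly positive on the relative interior of $Q$ (note $\phi_E\geq 0$ on $\widehat{\cX}_\eta$, since a local equation of the effective divisor $E$ lies in $\mathcal{O}_{\cX,\red_{\cX}(y)}$). On the relative interior of such a $Q$ the set $S_Q$ of components $E$ with $\phi_E>0$ is constant, so $\red_{\cX}$ takes values in the fixed intersection $\bigcap_{E\in S_Q}E$; by connectedness of $Q$ and continuity of $\rho_{\cX}$ the image $\rho_{\cX}(Q)$ lies in a single closed face $\sigma_Q$ of $\Sk(\cX)$. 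In the affine coordinates of $\sigma_Q$ the restriction $\rho_{\cX}|_Q$ is then the integral affine map $y\mapsto (\phi_E(y))_{E\in S_Q}$, so pulling back the rational polytopal subdivision of $\sigma_Q$ on which $f$ is affine refines $Q$ into finitely many rational polytopes on which $f\circ \rho_{\cX}$ is integral affine. This proves piecewise affineness on $\rho_{\cX}^{-1}(U)\cap \Sk(\cY)$.

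The main obstacle is the combinatorial matching in the last paragraph: one must verify that each piece $Q$ is carried by $\rho_{\cX}$ affinely into a single face of $\Sk(\cX)$, which means controlling the stratum $\red_{\cX}(Q)$ and the generic point determining $\sigma_Q$ (including the ambiguity coming from several connected components of the same intersection), and then recognizing that the target affine coordinates on $\sigma_Q$ pull back to precisely the functions $\phi_E$. The conceptual input that makes all of this go through is Proposition \ref{prop-pieceaff1}, which is what turns the divisorial functions $v_{\cdot}(E)$ into piecewise affine functions on the source skeleton; the remaining gluing and refinement are routine.
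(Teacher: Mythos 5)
Your proof is correct and takes essentially the same approach as the paper, whose entire proof reads: ``This is an easy consequence of Proposition \ref{prop-pieceaff1} and the definition of the map $\rho_{\cX}$.'' Your write-up is precisely a careful expansion of that one line --- Proposition \ref{prop-pieceaff1} applied to local equations of the prime components $E$ of $\cX_k$ makes the coordinate functions $y\mapsto v_y(E)$ piecewise affine on $\Sk(\cY)$, and the definition of $\rho_{\cX}$ in \eqref{sss-rho} identifies $\rho_{\cX}$, face by face, with the integral affine map $(v_{\cdot}(E))_E$ in the coordinates of the target face --- with the gluing and face-matching details (which you correctly flag and resolve) being the routine part the paper omits.
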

\begin{proof}
This is an easy consequence of Proposition \ref{prop-pieceaff1}
and the definition of the map $\rho_{\cX}$.
\end{proof}
As a corollary, we see that the notion of piecewise affine
function is intrinsic on $X$ and independent of the choice of an
$sncd$-model.
\begin{cor}\label{cor-pieceaff}
If $\cX$ and $\cY$ are $sncd$-models of $X$ and $U$ is a subset of
$\Sk(\cX)\cap \Sk(\cY)$, then a function $f\colon U\to \R$ is piecewise
affine with respect to $\cX$ if and only if it is piecewise affine
with respect to $\cY$.
\end{cor}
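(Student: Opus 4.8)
The plan is to deduce the corollary directly from Proposition \ref{prop-pieceaff2}, using the single extra fact that the retraction $\rho_{\cX}$ restricts to the identity on $\Sk(\cX)$. By the symmetry between $\cX$ and $\cY$, it suffices to prove one implication: if $f\colon U\to \R$ is piecewise affine with respect to $\cX$, then it is piecewise affine with respect to $\cY$.

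First I would record two elementary observations. Since $U\subset \Sk(\cX)\subset \widehat{\cX}_\eta$ and $\rho_{\cX}$ is, by the construction in \eqref{sss-rho}, a right inverse to the inclusion $\Sk(\cX)\to \widehat{\cX}_\eta$, the map $\rho_{\cX}$ is the identity on $U$. In particular every point $x$ of $U$ satisfies $\rho_{\cX}(x)=x\in U$, so that $U\subset \rho_{\cX}^{-1}(U)$; combined with the hypothesis $U\subset \Sk(\cY)$ this yields $U\subset \rho_{\cX}^{-1}(U)\cap \Sk(\cY)$. Moreover the functions $f\circ \rho_{\cX}$ and $f$ coincide on $U$.

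Next I would apply Proposition \ref{prop-pieceaff2} to the piecewise affine function $f$ on $U\subset \Sk(\cX)$, which gives that $f\circ \rho_{\cX}$ is piecewise affine with respect to $\cY$ on the set $\rho_{\cX}^{-1}(U)\cap \Sk(\cY)$. It then remains to pass to the subset $U$. For this I would verify that the restriction of a piecewise affine function to a subset of its domain is again piecewise affine: continuity is inherited by restriction, and if a face $\sigma$ of $\Sk(\cY)$ is covered by finitely many polytopes $P$ with rational barycentric vertices on which $f\circ \rho_{\cX}$ is affine, then $f\circ \rho_{\cX}$ is a fortiori affine on each smaller intersection $U\cap P$. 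Since $f\circ \rho_{\cX}$ equals $f$ on $U$, this exhibits $f$ itself as piecewise affine with respect to $\cY$.

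I do not expect any genuine obstacle: the substantive content is entirely carried by Proposition \ref{prop-pieceaff2}, and the remaining points are the bookkeeping facts that $\rho_{\cX}$ is the identity on $\Sk(\cX)$ and that piecewise affinity is inherited by subsets of the domain. The only thing to watch is to keep the two ambient skeleta distinct: piecewise affinity with respect to $\cY$ is tested on subsets of $\Sk(\cY)$ via the polytope covering attached to $\cY$, whereas the hypothesis concerning $\cX$ refers to the faces of $\Sk(\cX)$, and the two are linked precisely through the map $\rho_{\cX}$ supplied by Proposition \ref{prop-pieceaff2}.
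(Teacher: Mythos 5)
Your proof is correct and follows exactly the paper's route: the paper's own proof is simply ``This follows immediately from Proposition \ref{prop-pieceaff2}'', and your argument spells out the bookkeeping (that $\rho_{\cX}$ is the identity on $\Sk(\cX)\supset U$, and that piecewise affinity passes to subsets) which makes that deduction immediate.
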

\begin{proof}
This follows immediately from Proposition \ref{prop-pieceaff2}.
\end{proof}
 Thus we get a canonical piecewise affine structure on the set of monomial points in $X^{\an}$. Beware, however, that the notion of affine
 function depends on the chosen model.

\section{The weight function and the Kontsevich-Soibelman
skeleton}\label{sec-weight}
\subsection{Some reminders on canonical sheaves}
\sss Let $f\colon Y\to X$ be a morphism of finite type of locally
Noetherian schemes, and assume that $f$ is a local complete
intersection. Then one can define the canonical sheaf
$\omega_{Y/X}$ of the morphism $f$; see Section 6.4.2 and Exercise
6.4.6 in \cite{liu}.
 It is a line bundle on $Y$, whose restriction to
the smooth locus of $f$ is canonically isomorphic to the sheaf of
relative differential forms of maximal degree of $f$. We will
mainly be interested in the situation where $Y$ and $X$ are
regular; then every morphism of finite type $Y\to X$ is a local
complete intersection, by \cite[6.3.18]{liu}. If $g\colon Z\to Y$ is
another local complete intersection morphism between locally
Noetherian schemes, then the canonical sheaves of $f$, $g$ and
$f\circ g$ are related by the adjunction formula: there is a
canonical isomorphism \begin{equation}\label{eq-adj}
\omega_{Z/X}\cong
\omega_{Z/Y}\otimes_{\mathcal{O}_Z}g^*\omega_{Y/X},\end{equation}
by Theorem 6.4.9 and Exercise 6.4.6 in \cite{liu}.

\sss If $X$ and $Y$ are integral and $f$ is dominant and smooth at
the generic point of $Y$, then the function field $F(Y)$ of $Y$ is
a separable extension of the function field $F(X)$ of $X$, of
finite transcendence degree $d$. If $\xi$ is the generic point of
$Y$, then we can view $\omega_{Y/X}$ as a
sub-$\mathcal{O}_{Y}$-module of the constant sheaf on $Y$
associated to the rank one $F(Y)$-vector space
$$(\omega_{Y/X})_{\xi}\cong \Omega^d_{F(Y)/F(X)}.$$

\sss\label{sss-compcansh} Let us recall how, in suitable
situations, the canonical sheaf $\omega_{Y/X}$ can be explicitly
computed. Assume that $X$ and $Y$ are regular integral schemes,
and let $f:Y\to X$ be a morphism of finite type such that $f$ is
dominant and smooth at the generic point of $Y$. Denote by $d$ the
transcendence degree of $F(Y)$ over $F(X)$, and let $y$ be a point
of $Y$. Locally at $y$, the morphism $f$ is of the form
$$
\Spec A[T_1,\ldots,T_n]/(F_1,\ldots,F_r)\to \Spec A$$ where
 the sequence $(F_1,\ldots,F_r)$ is regular at $y$ and $r\leq n$. Then
$n=r+d$. Renumbering the polynomials $F_i$, we may assume that
$$\Delta=\det\left( \left[ \frac{\partial F_i}{\partial T_j}\right]_{1\leq i,j\leq r}\right)$$
 is different from zero in $\mathcal{O}_{Y,y}$. Then by \cite[6.4.14]{liu}, the stalk $(\omega_{Y/X})_y$ of the canonical sheaf
 $\omega_{Y/X}$ at the point $y$
 is the sub-$\mathcal{O}_{Y,y}$-module of $\Omega^d_{F(Y)/F(X)}$
 generated by
$$\Delta^{-1}(dT_{r+1}\wedge \ldots\wedge dT_n).$$

\sss \label{sss-relcan} Now let $f\colon Y\to X$ be a dominant morphism
of finite type of regular integral schemes, and assume that $f$ is
\'etale over the generic point of $X$. This means that the
function field $F(Y)$ is a finite separable extension of $F(X)$.
Then we can view $\omega_{Y/X}$ as a sub-$\mathcal{O}_{Y}$-module
of the constant sheaf
$$\Omega^0_{F(Y)/F(X)}\cong F(Y)$$ on $Y$. This embedding defines
a canonical element in the linear equivalence of Cartier divisors
on $Y$ associated to the line bundle $\omega_{Y/X}$, namely, the
divisor of the element $1\in F(Y)$ viewed as a rational section of
$\omega_{Y/X}$. We call this divisor the relative canonical
divisor of $f$ and denote it by $K_{Y/X}$. It follows from
\eqref{sss-compcansh} that $K_{Y/X}$ is effective and that its
defining ideal sheaf $\mathcal{O}(-K_{Y/X})$ is precisely
$\mathrm{Fitt}^0 \Omega^1_{Y/X}$, the $0$-th Fitting ideal of
$\Omega^1_{Y/X}$. If $g:Z\to Y$ is a dominant morphism of finite
type of regular integral schemes such that $F(Z)$ is separable
over $F(Y)$ of transcendence degree $d$, then the isomorphism in
the adjunction formula \eqref{eq-adj} is an equality of
sub-$\mathcal{O}_Z$-modules of the constant sheaf
$$\Omega^d_{F(Z)/F(Y)}\cong \Omega^d_{F(Z)/F(X)}$$ on $Z$.

\begin{example}\label{ex-blup}
Let $X$ be a regular integral scheme, and let $Z$ be a regular
integral closed subscheme of codimension $r$. If $f\colon Y\to X$ is the
blow-up of $X$ at $Z$, then one can use \eqref{sss-compcansh} to
compute that the relative canonical  divisor of $f$ is equal to
$(r-1)E$, with $E=f^{-1}(Z)$ the exceptional divisor of $f$.
\end{example}

\begin{prop}\label{prop-relcan}
Let $\cX$ and $\cY$  be regular flat $R$-schemes of finite type,
and assume that $\cY_k$ and $\cX_k$ are irreducible and
$(\cX_k)_{\red}$ is regular. Let $h\colon \cY\to \cX$ be a dominant
$R$-morphism such that $h$ is \'etale over the generic point of
$\cX$.
 We denote by $M$ and $N$ the multiplicities of $\cX_k$ and
 $\cY_k$, respectively, and by $\nu-1$ the multiplicity of $(\cY_k)_{\red}$ in the
 relative canonical divisor $K_{\cY/\cX}$. Then $M$ divides $N$ and $\nu\geq N/M$.
\end{prop}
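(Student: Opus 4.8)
The plan is to treat the two assertions separately; the divisibility $M\mid N$ is elementary, while $\nu\ge N/M$ is the real content and will follow from a local analysis of $\Omega^{1}_{\cY/\cX}$ along the special fiber of $\cY$. Write $\xi$ for the generic point of $\cY_k$ and set $x=h(\xi)$; since $h$ is an $R$-morphism, $x$ lies on $\cX_k$, although one should not assume that $x$ is the generic point of $\cX_k$ (it need not be). As $\cY$ is regular and $\cY_k=N\,(\cY_k)_{\red}$, the local ring $B:=\mathcal{O}_{\cY,\xi}$ is a discrete valuation ring, and its valuation $\ord_\xi$ (normalized by $\ord_\xi(s)=1$ for a uniformizer $s$) satisfies $\ord_\xi(\pi)=N$. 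Because $\cX_k$ is irreducible with $(\cX_k)_{\red}$ regular, we have $\cX_k=M\,(\cX_k)_{\red}$ as divisors, so in $A:=\mathcal{O}_{\cX,x}$ we may write $\pi=u\,\tau^{M}$ with $u$ a unit and $\tau$ a local equation for $(\cX_k)_{\red}$. Applying $h^{\#}$ and $\ord_\xi$ yields $N=M\cdot\ord_\xi(h^{\#}\tau)$, so $M\mid N$ and $a:=N/M=\ord_\xi(h^{\#}\tau)\ge 1$.

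For the inequality I would first reinterpret $\nu-1$. Since $\cX,\cY,R$ are regular, $h$ is a local complete intersection and, being generically \'etale, $\Omega^{1}_{\cY/\cX}$ is torsion at $\xi$; by \eqref{sss-relcan} the ideal of $K_{\cY/\cX}$ is $\mathrm{Fitt}^{0}\Omega^{1}_{\cY/\cX}$, so the coefficient $\nu-1$ of $(\cY_k)_{\red}$ in $K_{\cY/\cX}$ equals $\mathrm{length}_{B}\,\Omega^{1}_{B/A}$. Equivalently, one may use the adjunction formula \eqref{eq-adj} for $\cY\to\cX\to\Spec R$ together with the fact that a local generator $\omega_0$ of $\omega_{\cX/R}$ at $x$ has trivial divisor near $x$, which gives $\nu-1=\ord_\xi(h^{*}\omega_0)$. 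In either form it therefore suffices to prove $\mathrm{length}_{B}\,\Omega^{1}_{B/A}\ge a-1$.

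The crux, and the step I expect to require genuine care, is this last length estimate. The mechanism is the relation produced by $\tau$: writing $h^{\#}\tau=w\,s^{a}$ with $w\in B^{\times}$, the element $h^{\#}\tau$ lies in the image of $A$, so its differential vanishes in $\Omega^{1}_{B/A}$, giving $0=d(w\,s^{a})=s^{a-1}(a\,w\,ds+s\,dw)$. This relation is what should force, once the horizontal coordinates are separated off as below, a cyclic quotient of $\Omega^{1}_{B/A}$ of length at least $a-1$ supported on the $s$-direction, whence $\nu\ge a=N/M$. Turning the mechanism into a proof faces three difficulties. First, when $M$ or $N$ exceeds $1$ the special fibers are non-reduced, so $\omega_{\cX/R}$ and $\omega_{\cY/R}$ are local complete intersection canonical sheaves rather than sheaves of top differentials, and the computation must be carried out in the completed local rings using explicit regular systems of parameters. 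Second, the point $x$ may have positive residue transcendence degree over $k$, so one must choose coordinates on $\cX$ adapted simultaneously to $(\cX_k)_{\red}$ and to $h$ in order to isolate the vertical $s$-direction, which carries the relation above, from the horizontal ones; the latter can only enlarge the length and are harmless for a lower bound. Third, in the wildly ramified case the coefficient $a$ in $a\,w\,ds$ may vanish in $B$, and there one must fall back on $\ord_\xi(\pi)=N=Ma$ to recover the estimate, which then typically holds with strict inequality.
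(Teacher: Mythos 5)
Your treatment of $M\mid N$ coincides with the paper's, and your reformulation of the second assertion is correct: since $h$ is \'etale over the generic point of $\cX$, the stalk $\Omega^1_{B/A}$ is a finite torsion $B$-module, and combining \eqref{sss-relcan} with the fact that over a discrete valuation ring the order of the zeroth Fitting ideal of a finite torsion module equals its length, one indeed gets $\nu-1=\mathrm{length}_B\,\Omega^1_{B/A}$, so it suffices to prove $\mathrm{length}_B\,\Omega^1_{B/A}\geq a-1$ with $a=N/M$. The gap is in the step you yourself call the crux, and it is not any of the three difficulties you list. The relation $0=d(ws^a)=s^{a-1}(aw\,ds+s\,dw)$ can never \emph{force} a lower bound: a relation in a module only makes submodules and quotients smaller. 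What you have actually proved is that the cyclic submodule generated by $aw\,ds+s\,dw$ has length \emph{at most} $a-1$; it is perfectly consistent with this single relation that further, unrecorded relations kill $ds$ immediately, so the ``cyclic quotient of length at least $a-1$ supported on the $s$-direction'' could a priori be zero. A lower bound requires controlling \emph{all} relations, i.e.\ either a full presentation of $B$ over $A$, or equivalently an $A$-derivation $D\colon B\to B/(s^{a-1})$ with $D(s)$ a unit, which would give a surjection $\Omega^1_{B/A}\to B/(s^{a-1})$. You supply neither, and the derivation route is genuinely obstructed by a feature you only mention in passing: $A\to B$ is not quasi-finite (the residue field of $B$ has positive transcendence degree over that of $A$), so one cannot factor through an intermediate discrete valuation ring and invoke classical different/ramification theory.

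This missing control is exactly what the paper's proof constructs, and is why it is structured as it is. After shrinking, the paper produces a presentation $\mathcal{O}(\cX)[T_1,\ldots,T_n]/(F_1,\ldots,F_n)\cong\mathcal{O}(\cY)$ near $\xi$ with the \emph{same} number of equations as variables, $(F_1,\ldots,F_n)$ a regular sequence (this uses that $h$ is a generically finite local complete intersection and that the hypersurface $PT_1^{N/M}=f$ is regular above $x$, so that $\cY$ embeds in it as a regular immersion at $\xi$), with first equation $F_1=PT_1^{N/M}-f$ and with $T_1$ mapping to a local equation of $(\cY_k)_{\red}$ --- this equation is precisely your relation. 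By \eqref{sss-compcansh} the ideal of $K_{\cY/\cX}$ at $\xi$ is then generated by the single determinant $\Delta=\det\bigl[\partial F_i/\partial T_j\bigr]$, and since every entry of its first row is divisible by $T_1^{N/M-1}$, one gets $\nu-1=\ord_\xi(\Delta)\geq N/M-1$. Because there is only one maximal minor, the one relation you identified really does bound the Fitting ideal, which is how the upper-bound nature of relations is converted into a lower bound on $\nu$. Note also that this argument is characteristic-free: if the residue characteristic divides $N/M$, the term $(N/M)PT_1^{N/M-1}$ may vanish, but the first row is still divisible by $T_1^{N/M-1}$, so no separate wild case arises --- whereas your proposed fallback there (``use $\ord_\xi(\pi)=N$'') is not an argument. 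In short: your length reformulation is sound, but the inequality still requires building the presentation; once it is built, your relation becomes the first row of the Jacobian and the proof closes as in the paper.
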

\begin{proof}
 First, we prove that $M$ divides $N$. Let $y$ be the generic point of $\cY_k$ and set $x=h(y)$. Then we
can find a prime element $f$ and a unit $u$ in
$\mathcal{O}_{\cX,x}$ such that $\pi=f^Mu$ is a uniformizer in
$R$. Thus the rational function $h^*f^M$ on $\cY$ has order $N$
along $\cY_k$, which implies that $M$ divides $N$.

 Now, we prove that $\nu\geq M/N$.   Shrinking $\cX$ and $\cY$, we
may assume that the following properties hold:
\begin{itemize}
\item $\cX$ and $\cY$ are affine, \item  the element $f\in
\mathcal{O}_{\cX,x}$ is defined at every point of $\cX$, \item
there exist an integer $m>0$, a polynomial $P$ in
$\mathcal{O}(\cX)[T_1,\ldots,T_m]$ and a surjective morphism of
$\mathcal{O}(\cX)$-algebras
$$\varphi\colon A:=\mathcal{O}(\cX)[T_1,\ldots,T_m]/(PT^{N/M}_1-f)\to
\mathcal{O}(\cY).$$
\end{itemize} Clearly, $\Spec A$ is regular at every point
lying above $x$, so that the morphism $\cY\to \Spec A$ is a
regular immersion at the point $y$.
 We set $F_1=PT_1^{N/M}-f$. Further shrinking $\cY$ around
$y$, we may assume that there exist an integer $n\geq m$ and
polynomials $F_2,\ldots,F_n$ in $\mathcal{O}(\cX)[T_1,\ldots,T_n]$
such that the sequence $(F_1,\ldots,F_n)$ is a regular sequence in
$\mathcal{O}(\cX)[T_1,\ldots,T_n]$ and such that the morphism
$\varphi$ factors through an isomorphism of
$\mathcal{O}(\cX)$-algebras
$$\mathcal{O}(\cX)[T_1,\ldots,T_n]/(PT^{N/M}_1-f,F_2,\ldots,F_n)\to
\mathcal{O}(\cY).$$ Applying \eqref{sss-compcansh}, we now compute
that $\nu\geq N/M$.
\end{proof}

\subsection{The weight of a differential form at a divisorial valuation}
\sss Let $X$ be a connected smooth separated $K$-scheme of
dimension $n$ and let $\omega$ be a non-zero rational
$m$-pluricanonical form on $X$, for some $m>0$. Thus $\omega$ is a
non-zero rational section of the $m$-pluricanonical line bundle
$\omega_{X/K}^{\otimes m}$ on $X$. In the following subsections,
we will explain how one can use $\omega$
 to define a weight function on the analytic space
 $X^{\an}$. As we will see, if $X$ is proper and $\omega$ is regular at every point
 of $X$, then the weight function has the interesting property that it
 is strictly increasing if one moves away from the Berkovich
 skeleton associated to any proper $sncd$-model of $X$. We will
 use this property to compute the so-called Kontsevich-Soibelman
 skeleton of the pair $(X,\omega)$ in Theorem \ref{thm-KS}.

\sss For each regular $R$-model $\cX$ of $X$, the pluricanonical
 form $\omega$ defines a rational section of the relative $m$-pluricanonical
line bundle $\omega^{\otimes m}_{\cX/R}$ and thus a divisor
$\mathrm{div}_{\cX}(\omega)$ on $\cX$. If $h\colon \cX'\to \cX$ is
a morphism of regular $R$-models of $X$, then it follows from
\eqref{sss-relcan} that
$$\mathrm{div}_{\cX'}(\omega)= h^*\mathrm{div}_{\cX}(\omega)+mK_{\cX'/\cX}.$$

\sss \label{sss-wdiv} We first define the weight function on
divisorial points. Let $x$ be a divisorial point of $X^{\an}$,
associated to a regular $R$-model $\cX$ of $X$ and an irreducible
component $E$ of $\cX_k$.   We denote by $\mu$ the unique integer
 such that $\mu-m$ equals the multiplicity of $E$ in
$\mathrm{div}_{\cX}(\omega)$, and by $N$ the multiplicity of $E$
in $\cX_k$.

\begin{prop}
The rational number $\mu/N$ only depends on $X$, $\omega$ and $x$,
and not on the choice of the model $\cX$.
\end{prop}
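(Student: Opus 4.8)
The plan is to prove the sharper statement that \emph{both} $N$ and $\mu$ are individually intrinsic to the pair $(x,\omega)$; the independence of $\mu/N$ is then immediate. Fix two regular $R$-models $\cX,\cX'$ of $X$ with irreducible components $E\subset \cX_k$ and $E'\subset \cX'_k$ both realizing the divisorial point $x$, and let $\xi,\xi'$ be their generic points, $N,\mu$ and $N',\mu'$ the associated integers. By \eqref{sss-divpointexpl} we may assume $\cX_k$ and $\cX'_k$ are irreducible. The central observation is that, by the construction recalled in \eqref{sss-divpointexpl}, $\mathcal{O}_{\cX,\xi}$ is exactly the valuation ring of $v_E=v_x$ inside $K(X)$, and likewise $\mathcal{O}_{\cX',\xi'}$ is the valuation ring of $v_{E'}=v_x$. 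Since the divisorial point $x$ determines the valuation $v_x$, hence its valuation ring, we get an equality of $R$-subalgebras of $K(X)$:
$$\mathcal{O}_{\cX,\xi}=\mathcal{O}_{v_x}=\mathcal{O}_{\cX',\xi'}.$$
Everything then reduces to reading off $N$ and $\mu$ from this common local ring together with $\omega$.

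The invariant $N$ is the easy case: by \eqref{sss-divpointexpl} the multiplicity of $E$ in $\cX_k$ equals the index of $\Z=v_K(K^{\times})$ in the value group $v_x(K(X)^{\times})$, and this index depends only on $v_x$; hence $N=N'$. For $\mu$, recall that $\mu-m$ is the multiplicity of $E$ in $\mathrm{div}_{\cX}(\omega)$, that is, the order of vanishing, for the discrete valuation of the DVR $\mathcal{O}_{\cX,\xi}$, of the ratio $\omega/\eta^{\otimes m}$, where $\eta$ is a local generator of the stalk $(\omega_{\cX/R})_\xi$ inside the one-dimensional $K(X)$-vector space $\Omega^n_{K(X)/K}$. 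The key point is that this stalk depends only on the $R$-algebra $\mathcal{O}_{\cX,\xi}$: indeed $(\omega_{\cX/R})_\xi$ is the relative canonical module $\omega_{\mathcal{O}_{\cX,\xi}/R}$, a construction intrinsic to the local complete intersection $R$-algebra $\mathcal{O}_{\cX,\xi}$, and the explicit formula of \eqref{sss-compcansh} shows that its canonical embedding into $\Omega^n_{K(X)/K}$ through the generic fiber is determined by $\mathcal{O}_{\cX,\xi}$ alone. Since $\mathcal{O}_{\cX,\xi}=\mathcal{O}_{\cX',\xi'}$, the two lattices $(\omega_{\cX/R})_\xi$ and $(\omega_{\cX'/R})_{\xi'}$ coincide, so a single generator $\eta$ serves for both models and the orders of $\omega$ along $\xi$ and $\xi'$ agree; thus $\mu=\mu'$, and $\mu/N=\mu'/N'$.

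I expect the only genuine thing to verify carefully is the intrinsicness asserted in the previous paragraph, namely that the canonical-module lattice $(\omega_{\cX/R})_\xi\subset \Omega^n_{K(X)/K}$ is a function of the $R$-algebra $\mathcal{O}_{\cX,\xi}$ and not of the ambient finite-type model; this follows from the compatibility of the canonical sheaf with localization together with \eqref{sss-compcansh}. A more geometric but essentially equivalent route would be to dominate $\cX$ and $\cX'$ by a common normal model $\cX''$: since a normal scheme is regular in codimension one, $\cX''$ is a regular model in a neighbourhood of the center of $v_x$, and the comparison of relative canonical divisors in \eqref{sss-relcan} and \eqref{eq-adj} shows that the discrepancy term vanishes there because both projections are local isomorphisms at that center. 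Either way, the proof rests on the elementary fact that every modification realizing $x$ induces the same local ring at the generic point of the divisor computing $v_x$.
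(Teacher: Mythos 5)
Your reduction to the equality of local rings $\mathcal{O}_{\cX,\xi}=\mathcal{O}_{v_x}=\mathcal{O}_{\cX',\xi'}$ is exactly the paper's starting point, and your treatment of $N$ is correct: by \eqref{sss-divpointexpl} the multiplicity of $E$ in $\cX_k$ equals the index of $v_K(K^{\times})=\Z$ in the value group of $v_x$, so it is intrinsic. The gap is in the step you yourself flag as ``the only genuine thing to verify'': the claim that the lattice $(\omega_{\cX/R})_\xi\subset \Omega^n_{K(X)/K}$ depends only on the $R$-algebra $\mathcal{O}_{\cX,\xi}$. The justification you offer (compatibility of the canonical sheaf with localization together with \eqref{sss-compcansh}) does not prove this. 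The formula in \eqref{sss-compcansh} computes the stalk in terms of a chosen presentation of the \emph{ambient finite-type scheme}; the theory in \cite{liu} guarantees independence of the presentation for a fixed finite-type $R$-scheme, but here one must compare presentations coming from two different models, and the valuation ring $\mathcal{O}_{v_x}$ is not of finite type over $R$, so ``$\omega_{\mathcal{O}_{\cX,\xi}/R}$'' is not even a defined object in the framework the paper uses. As written, the main route assumes precisely what has to be proven. (The intrinsicness is in fact true, but the shortest honest proofs require an input you do not supply, e.g.\ realizing the canonical module of an lci map as the determinant of the cotangent complex, which localizes and makes sense for arbitrary ring maps; that is outside the paper's toolkit.)

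The paper closes this gap by a spreading-out argument: since $\cX$ and $\cX'$ are of finite type over the Noetherian ring $R$, the equality of local rings, compatible with the identification of generic fibers, extends to an isomorphism between open neighbourhoods of $\xi$ and $\xi'$; this isomorphism identifies special fibers, canonical sheaves and $\mathrm{div}(\omega)$ near these points, giving $N=N'$ and $\mu=\mu'$ simultaneously. Your second, ``geometric'' route is the one that actually works, and it is not equivalent to your first: let $\cX''$ be the normalization of the closure of $X$, embedded diagonally, in $\cX\times_R\cX'$ (this construction also handles non-proper models, for which the paper's domination statement is not available). The center $\xi''$ of $v_x$ on $\cX''$ satisfies $\mathcal{O}_{\cX,\xi}\subset \mathcal{O}_{\cX'',\xi''}\subset \mathcal{O}_{v_x}$, so all three rings coincide; hence $\xi''$ is a codimension-one point at which $\cX''$ is regular, and after shrinking $\cX''$ to its regular locus (which contains $\xi''$ and the generic fiber) the projection $h\colon \cX''\to\cX$ is a morphism of regular models. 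Since $h$ induces an isomorphism on local rings at $\xi''$, the stalk of $\Omega^1_{\cX''/\cX}$ there vanishes, so by \eqref{sss-relcan} the prime divisor through $\xi''$ does not occur in $K_{\cX''/\cX}$; the rule $\mathrm{div}_{\cX''}(\omega)=h^*\mathrm{div}_{\cX}(\omega)+mK_{\cX''/\cX}$ then gives $\mu''=\mu$, and symmetrically $\mu''=\mu'$, while $N''=N=N'$ by the value-group argument. Written out this way your proof is complete, genuinely different from the paper's mechanism, and yields the sharper conclusion that $\mu$ and $N$ are separately intrinsic to $(x,\omega)$.
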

\begin{proof}
 Let $\cY$ be another regular $R$-model of $X$ and let $F$ be an
 irreducible component of $\cY_k$ such that $x$ is the divisorial
 point associated to $(\cY,F)$.
Then the isomorphism between the generic fibers of $\cX$ and $\cY$
will extend to an isomorphism between some open neighbourhoods of
the generic points of $E$ and $F$, respectively, because the local
rings at these points are the same when viewed as subrings of the
field of rational functions on $X$: they both coincide with the
valuation ring of $v_x$. Thus the value $\mu/N$ does not depend on
the choice of the $R$-model $\cX$.
\end{proof}

\sss \label{sss-divweight}  We call $\mu/N$ the weight of $\omega$
at the point $x$, and denote it by $\weight_{\omega}(x)$. In this
way, we obtain a function
$$\weight_{\omega}\colon \mathrm{Div}(X)\to \Q,\, x\mapsto
\weight_{\omega}(x)$$ on the set of divisorial points of
$X^{\an}$, which we call the weight function associated to
$\omega$. Note that
$$\weight_{\omega^{\otimes d}}(x)=d\cdot\weight_{\omega}(x)$$ for every
integer $d>0$ and
$$\weight_{f\omega}(x)=\weight_{\omega}(x)+v_x(f)$$ for every
non-zero rational function $f$ on $X$.

\sss \label{sss-mu} In the following subsections, it will be
convenient to have a formula for the weight of $\omega$ at a
divisorial point in terms of a monomial presentation. Let $x$ be a
monomial point on $X^{\an}$, represented by an $sncd$-triple
$(\cX,(E_1,\ldots,E_r),\xi)$ and a tuple $\alpha$ in $\R^r_{> 0}$.
 Replacing $\cX$ by its open subscheme of regular points does not
influence the associated monomial valuation, so that we may assume
that $\cX$ is regular. As before, the rational $m$-pluricanonical
form $\omega$ defines a rational section of the relative
$m$-pluricanonical sheaf $\omega^{\otimes m}_{\cX/R}$, and thus a
divisor $\mathrm{div}_{\cX}(\omega)$ on $\cX$.  For each $i$ in
$\{1,\ldots,r\}$, we denote by $\mu_i$ the unique integer such
that $\mu_i-m$ equals the multiplicity of
$\mathrm{div}_{\cX}(\omega)$ along $E_i$. Recall that we
associated a value $v_x(E)$ to every divisor $E$ on $\cX$ in
\eqref{sss-model}.


\begin{lemma}\label{lemm-divindep}
If $x$ is divisorial, then
$$\weight_{\omega}(x)=v_x(\mathrm{div}_{\cX}(\omega)+m(\cX_k)_{\red}).$$
In particular, if $\xi$ is not contained in the closure of the
locus of zeroes and poles of $\omega$ on $X$, then
$$\weight_{\omega}(x)=\sum_{i=1}^r\alpha_i\mu_i.$$
\end{lemma}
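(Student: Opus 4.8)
The plan is to reduce the first identity to a single divisor computation on blow-ups, and then to read off the second identity from the first by a local analysis at $\xi$. Throughout I write $D_{\cX}:=\mathrm{div}_{\cX}(\omega)+m(\cX_k)_{\red}$ for the divisor appearing in the statement; since $x$ is a birational point and the support of $D_{\cX}$ is a proper closed subset of $\cX$, the value $v_x(D_{\cX})$ is defined. Because $x$ is divisorial it has rational rank one, so by Proposition \ref{prop-rank1} the tuple $\alpha$ lies in $\Q^r_{>0}$, and the procedure in the proof of that proposition transforms the given monomial presentation into a divisorial presentation $(\cX',E')$ through a finite chain of blow-ups $h\colon\cX'\to\cX$, each centered at the closure of the generic point of an intersection of irreducible components of the current (strict normal crossings) special fiber. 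Since $v_x(D_{\cX})$ depends only on the germ of $D_{\cX}$ at $\xi=\red_{\cX}(x)$, I may freely shrink $\cX$ to a neighbourhood of $\xi$ on which the special fiber has strict normal crossings.

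The key claim is that $D_{\cX'}=h^*D_{\cX}$ for each blow-up $h\colon\cX'\to\cX$ of the closure of the generic point of $E_1\cap\cdots\cap E_c$, where $E_1,\dots,E_c$ are the components through the (codimension $c$) center. Granting this, the pullback-compatibility $v_x(h^*D)=v_x(D)$ for proper morphisms of regular models (stated before Proposition \ref{prop-cont}) gives $v_x(D_{\cX})=v_x(h^*D_{\cX})=v_x(D_{\cX'})$, and chaining over the finitely many blow-ups reduces the first identity to the case of a divisorial presentation. To prove the claim, I combine the transformation rule $\mathrm{div}_{\cX'}(\omega)=h^*\mathrm{div}_{\cX}(\omega)+mK_{\cX'/\cX}$ (a consequence of \eqref{sss-relcan}) with the divisor identity
$$K_{\cX'/\cX}+(\cX'_k)_{\red}=h^*\big((\cX_k)_{\red}\big),$$
since together they yield $D_{\cX'}=h^*\mathrm{div}_{\cX}(\omega)+m\,h^*((\cX_k)_{\red})=h^*D_{\cX}$. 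This displayed identity is the technical heart, and I would verify it componentwise on $\cX'_k$. Along the exceptional divisor $E_0$ one has $\ord_{E_0}K_{\cX'/\cX}=c-1$ by Example \ref{ex-blup}, while each of the $c$ components $E_i$ through the center satisfies $\ord_{E_0}(h^*E_i)=1$, so the left-hand side has $\ord_{E_0}$ equal to $(c-1)+1=c$, matching $\ord_{E_0}$ of the right-hand side; along the strict transforms and the untouched components $h$ is a local isomorphism, so both sides have order $1$ there, and all remaining prime divisors are horizontal and hence absent from both sides.

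It remains to evaluate on the divisorial presentation and to treat the ``in particular'' clause. At $\xi'=\red_{\cX'}(x)$, the generic point of the prime divisor $E'$, no other component of $\cX'_k$ passes through $\xi'$ and the horizontal part of $\mathrm{div}_{\cX'}(\omega)$ meets $E'$ only in a proper closed subset; hence locally $D_{\cX'}=\mu'E'$, where $\mu'-m$ is the multiplicity of $E'$ in $\mathrm{div}_{\cX'}(\omega)$. With $N'$ the multiplicity of $E'$ in $\cX'_k$ the normalization gives $v_x(E')=1/N'$, so $v_x(D_{\cX'})=\mu'/N'=\weight_{\omega}(x)$ by \eqref{sss-divweight}, which proves the first identity. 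For the second, I return to the original model $\cX$; the extra hypothesis that $\xi$ avoids the closure of the zero/pole locus of $\omega$ forces the horizontal part of $\mathrm{div}_{\cX}(\omega)$ to avoid $\xi$, so locally at $\xi$ one has $\mathrm{div}_{\cX}(\omega)=\sum_{i=1}^r(\mu_i-m)E_i$ and $(\cX_k)_{\red}=\sum_{i=1}^rE_i$, whence $D_{\cX}=\sum_{i=1}^r\mu_iE_i$. Applying $v_x$, using its $\Z$-linearity in the divisor and the equalities $v_x(E_i)=\alpha_i$ from \eqref{sss-reducemon}, gives $v_x(D_{\cX})=\sum_{i=1}^r\alpha_i\mu_i$, which with the first identity is the desired formula. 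The main obstacle is precisely the componentwise verification of the displayed divisor identity, that is, matching the log discrepancy $c-1$ against the reduced-fiber correction along each exceptional divisor.
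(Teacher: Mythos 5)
Your proof is correct and follows essentially the same route as the paper: both reduce to a divisorial presentation via the blow-up chain from Proposition \ref{prop-rank1} and then check that $v_x\bigl(\mathrm{div}_{\cX}(\omega)+m(\cX_k)_{\red}\bigr)$ is invariant under each such blow-up, using Example \ref{ex-blup} for the relative canonical divisor. The only (cosmetic) difference is that you package the invariance as the log-crepant divisor identity $K_{\cX'/\cX}+(\cX'_k)_{\red}=h^*\bigl((\cX_k)_{\red}\bigr)$ and then apply $v_x\circ h^*=v_x$, whereas the paper performs the equivalent numerical computation directly in the barycentric coordinates $\alpha$, $\alpha'$.
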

\begin{proof}
The second assertion follows immediately from the first, since in
this case, locally around $\xi$, we have
$$\mathrm{div}_{\cX}(\omega)=\sum_{i=1}^r(\mu_i-m)E_i$$ so that
$$v_x(\mathrm{div}_{\cX}(\omega))=\sum_{i=1}^r(\mu_i-m)\alpha_i.$$
 Thus it suffices to prove the first assertion. If $r=1$ this is simply the definition of the weight of $\omega$ at $x$, and we will reduce to this situation.
 We construct an $sncd$-triple $$(\cX',(E'_1,\ldots,E'_r),\xi')$$ and a tuple $\alpha'$ in $\Q^r_{>0}$ as in the proof of Proposition
 \ref{prop-rank1}, by blowing up $\cX$ at the unique connected component of
 $\cap_{i=1}^r E_i$ that contains $\xi$. As we have explained in the
 proof of Proposition
 \ref{prop-rank1}, repeating this operation a finite number of
 times, we can reduce to the case where $r=1$.
 Thus it is enough to show that the value
 $$v_x(\mathrm{div}_{\cX}(\omega)+m(\cX_k)_{\red})$$ does not
 change under such blow-ups, i.e.,
 $$v_x(\mathrm{div}_{\cX}(\omega))+m\sum_{i=1}^r\alpha_i=v_x(\mathrm{div}_{\cX'}(\omega))+m\sum_{i=1}^r\alpha'_i.$$

  The relative canonical divisor of the
 blow-up morphism $\cX'\to \cX$ is equal to $(r-1)E'_1$, so that
 $$h^*\mathrm{div}_{\cX}(\omega)=\mathrm{div}_{\cX'}(\omega)-m(r-1)E'_1.$$  Since $\alpha'_1=\alpha_1$ and $\alpha'_i=\alpha_i-\alpha_1$ for $2\leq i\leq r$, we obtain:
 \begin{eqnarray*}
v_x(\mathrm{div}_{\cX'}(\omega))+m\sum_{i=1}^r\alpha'_i
 &=&
v_x(h^*\mathrm{div}_{\cX}(\omega))+m(r-1)\alpha_1+m\sum_{i=1}^r\alpha'_i
\\ &=&v_x(\mathrm{div}_{\cX}(\omega))+m\sum_{i=1}^r\alpha_i.
 \end{eqnarray*}
\end{proof}

\begin{lemma}\label{lemm-ineq}
Let $\cY$ be an $sncd$-model for $X$ and let $y$ be a divisorial
point on $\widehat{\cY}_\eta$. Then $$ \weight_{\omega}(y)\geq
v_y(\mathrm{div}_{\cY}(\omega)+m(\cY_k)_{\red})$$ and equality
holds if and only if $y$ lies on the Berkovich skeleton
$\Sk(\cY)$.
\end{lemma}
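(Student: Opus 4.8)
The plan is to establish the inequality by realizing $y$ as a divisorial point attached to a single component on a model dominating $\cY$, and then to collapse the whole statement to one inequality of multiplicities governed by the relative canonical divisor. The easy half of the equality is immediate: when $y$ lies on $\Sk(\cY)$, the model $\cY$ is adapted to $y$ in the sense of \eqref{sss-mondef2}, so Lemma \ref{lemm-divindep} applied with $\cX=\cY$ gives $\weight_\omega(y)=v_y(\mathrm{div}_{\cY}(\omega)+m(\cY_k)_{\red})$ at once.

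For the general inequality I would first choose a proper morphism of regular $R$-models $h\colon\cY'\to\cY$ such that $y$ is the divisorial point attached to a single irreducible component $E'$ of $\cY'_k$. Such a model exists because $y$ is divisorial and any two models of $X$ are dominated by a common one; in residue characteristic zero or for curves one resolves to keep $\cY'$ regular, and in general one only needs regularity at the generic point $\eta'$ of $E'$, where $\mathcal{O}_{\cY',\eta'}$ is a discrete valuation ring, since this is all that enters the computation of $v_y$. Write $N'$ for the multiplicity of $E'$ in $\cY'_k$ and $\nu-1$ for its multiplicity in $K_{\cY'/\cY}$. Since $E'$ is the only prime divisor of $\cY'_k$ through $\eta'$, and distinct prime divisors do not contain one another's generic points, one gets $v_y((\cY'_k)_{\red})=1/N'$ and $v_y(K_{\cY'/\cY})=(\nu-1)/N'$. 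Now \eqref{sss-relcan} gives $\mathrm{div}_{\cY'}(\omega)=h^*\mathrm{div}_{\cY}(\omega)+mK_{\cY'/\cY}$, and $v_y(h^*D)=v_y(D)$ for every divisor $D$ on $\cY$. Feeding these into Lemma \ref{lemm-divindep} for the single-component presentation $(\cY',E')$ yields
\[
\weight_\omega(y)=v_y(\mathrm{div}_{\cY}(\omega))+m\frac{\nu}{N'},\qquad
v_y(\mathrm{div}_{\cY}(\omega)+m(\cY_k)_{\red})=v_y(\mathrm{div}_{\cY}(\omega))+m\frac{\sum_j a_j}{N'},
\]
where $a_j=\ord_{E'}(h^*F_j)$ and $F_j$ runs over the components of $\cY_k$. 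The terms $v_y(\mathrm{div}_{\cY}(\omega))$, which carry the horizontal part of the divisor of $\omega$, cancel, and the entire statement collapses to the single inequality $\nu\geq\sum_j a_j$, with equality if and only if $y\in\Sk(\cY)$.

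The core of the proof is therefore the inequality
\[
1+\ord_{E'}(K_{\cY'/\cY})\ \geq\ \ord_{E'}\bigl(h^*(\cY_k)_{\red}\bigr),
\]
which is exactly the non-negativity of the log discrepancy of the log-smooth pair $(\cY,(\cY_k)_{\red})$ along $E'$. I would prove it by a local computation of $K_{\cY'/\cY}$ in coordinates adapted to the strict normal crossings divisor $\cY_k$, in the spirit of Proposition \ref{prop-relcan}: for a monomial divisor $E'$ over $\cY$ one computes $\ord_{E'}(K_{\cY'/\cY})=\sum_j a_j-1$ exactly, while any non-monomial divisor produces a strict surplus. This simultaneously delivers the equality characterization, since $\nu=\sum_j a_j$ holds precisely when $v_y$ is a monomial valuation with respect to $\cY$, which by \eqref{sss-mondef2} means that $\cY$ is adapted to $y$, i.e.\ $y\in\Sk(\cY)$.

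The step I expect to be the main obstacle is this relative-canonical computation together with its sharp equality clause. One must control $\ord_{E'}(K_{\cY'/\cY})$ uniformly in the residue characteristic of $k$, where wild ramification rules out a naive Jacobian computation and forces the careful presentation of $h$ underlying Proposition \ref{prop-relcan}; and one must verify that the deficit $\nu-\sum_j a_j$ is not merely non-negative but strictly positive for every divisorial valuation that is not monomial over $\cY$, which is what pins the equality case down to the Berkovich skeleton.
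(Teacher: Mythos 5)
Your reformulation of the lemma is correct, and the bookkeeping is exactly right: realizing $y$ as the divisorial point of a single component $E'$ on a regular model $\cY'\to\cY$, one gets $\weight_{\omega}(y)=v_y(\mathrm{div}_{\cY}(\omega))+m\nu/N'$ and $v_y(\mathrm{div}_{\cY}(\omega)+m(\cY_k)_{\red})=v_y(\mathrm{div}_{\cY}(\omega))+m(\sum_j a_j)/N'$, and the case $y\in\Sk(\cY)$ does follow at once from Lemma \ref{lemm-divindep}. So the lemma is equivalent to: $\nu\geq\sum_j a_j$ for every divisorial valuation with center in $\cY_k$, with equality if and only if the valuation is monomial with respect to $\cY$. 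The gap is that this residual statement is not an auxiliary fact you can then dispatch by ``a local computation in adapted coordinates''; it is the entire content of Lemma \ref{lemm-ineq}, and your proposal asserts it rather than proves it, as you yourself concede in your last paragraph.

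Concretely, two things are missing. First, Proposition \ref{prop-relcan}, which you invoke ``in spirit,'' only treats the case where the special fiber of the lower model is irreducible at the relevant point, i.e.\ a single $F_j$; the paper has no multi-component version, and the presentation trick in its proof does not extend to several branches in any evident way, especially in positive or mixed residue characteristic, where the lemma is also asserted. The paper closes exactly this gap by a reduction rather than a stronger computation: it first shows that $v_y(\mathrm{div}_{\cY}(\omega)+m(\cY_k)_{\red})$ equals $\weight_{\omega}(y')$ for the retraction $y'=\rho_{\cY}(y)$ (using $v_y(E_i)=v_{y'}(E_i)$ together with Lemma \ref{lemm-divindep}), and then uses the blow-up procedure of Propositions \ref{prop-rank1}, \ref{prop-blup} and \ref{prop-domin} to replace $\cY$ by a model on which $y'$ is attached to a single component, after which Proposition \ref{prop-relcan} applies verbatim. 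Second, the strict inequality for non-monomial $y$ --- which you correctly identify as what pins the equality case to the skeleton --- is obtained in the paper by a geometric argument, not by inspecting coordinates: if $y\neq\rho_{\cY}(y)$, one blows up the closure of $\red_{\cY}(y)$ (made regular by shrinking $\cY$), observes that $g^*(\cY_k)_{\red}=(\cY'_k)_{\red}$ near the relevant points while $v_y(K_{\cY'/\cY})>0$ because the center of $v_y$ lands in the exceptional divisor, and then applies the already-established non-strict inequality on the blown-up model. Your sketch contains no mechanism that detects non-monomiality and converts it into a positive deficit $\nu-\sum_j a_j$; in residue characteristic zero one could fall back on the theory of log-smooth pairs, but in the generality of the lemma no such citation is available. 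Until that step is supplied, what you have is a correct restatement of the lemma, not a proof of it.
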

\begin{proof}
Denote by $E_1,\ldots,E_r$ the irreducible components of $\cY_k$
that contain $\red_{\cY}(y)$, and let $\xi$ be the generic point
of the connected component of $\cap_{i=1}^rE_i$ containing
$\red_{\cY}(y)$. We set $y'=\rho_{\cY}(y)$.  The point $y'$ is
divisorial because the value group of $v_{y'}$ is contained in the
value group of $v_y$. Multiplying $\omega$ with a suitable
rational function on $X$, we may assume that $\red_{\cY}(y)$ is
not contained in the closure of the locus of zeroes and poles of
$\omega$ on $X$.
 Then
$$v_y(\mathrm{div}_{\cY}(\omega)+m(\cY_k)_{\red})=v_{y'}(\mathrm{div}_{\cY}(\omega)+m(\cY_k)_{\red})=\weight_{\omega}(y')$$
 by Lemma \ref{lemm-divindep}, so that it is enough to
show that
$$\weight_{\omega}(y)\geq \weight_{\omega}(y'),$$ with equality if
and only if $y=y'$.


By means of the blowing up procedure explained in the proof of
Proposition \ref{prop-rank1}, we construct an $sncd$-model $\cX$
of $X$ and an irreducible component $E$ of $\cX_k$ such that $y'$
is the divisorial point associated to $(\cX,E)$.
 Then Propositions \ref{prop-blup} and \ref{prop-domin} imply that
 $y'=\rho_{\cX}(y)$. Thus we can assume without loss of generality
that $r=1$. We set $E=E_1$.

 Since every element of $\mathcal{O}_{\cY,\red_{\cY}(y)}$ belongs to the
valuation ring of $v_y$, we can find a regular $R$-model $\cZ$ of
$X$ such that $\cZ_k$
 is irreducible and $y$ is the divisorial point associated to
 $(\cZ,(\cZ_k)_{\red})$, together with a morphism $h\colon \cZ\to \cY$ of
 $R$-models of $X$. This morphism sends the generic point of $\cZ_k$ to
 $\xi$. We set $F=(\cZ_k)_{\red}$.

 We denote by $\mu-m$ the
multiplicity of $E$ in $\mathrm{div}_{\cY}(\omega)$, by $M$ the
multiplicity of $E$ in $\cY_k$, by $\nu-1$ the multiplicity of $F$
in $K_{\cZ/\cY}$ and by $N$ the multiplicity of $F$ in $\cZ_k$.
 If
  $f=0$ is a local equation for $E$ in $\cY$ at $\xi$, then
  $$v_y(f)=v_{y'}(f)=\frac{1}{M}$$ because, up to an invertible
  factor, $f^{M}$ is a uniformizer in $R$. This implies that $N$
  is a multiple of $M$ and that the multiplicity of $F$ in  $\mathrm{div}_{\cZ}(\omega)$  is equal to
  $$(\mu-m)\frac{N}{M}+m\nu-m.$$
  Now we compute:
  $$\weight_{\omega}(y)=\frac{1}{N}((\mu-m)\frac{N}{M}+m\nu-m+m)=\frac{\mu}{M}+\frac{m(\nu-N/M)}{N}.$$
On the other hand,
  $$\weight_{\omega}(y')= \frac{\mu}{M}.$$
It follows from Proposition \ref{prop-relcan} that $\nu\geq N/M$,
so that $$\weight_{\omega}(y)\geq \weight_{\omega}(y').$$ Now
assume that $y\neq y'$.  We denote by $Y$ the closure of
$\{\red_{\cY}(y)\}$ in $\cY$, endowed with its reduced induced
structure.
 Shrinking $\cY$, we may assume that $Y$ is regular. Note that $Y$ is a strict subvariety of $(\cY_k)_{\red}$, since otherwise,
  $y$ would be equal to $y'$.
   We denote by $g:\cY'\to \cY$ the blow-up of $\cY$ at $Y$.
 Then
\begin{eqnarray*}
v_y(\mathrm{div}_{\cY}(\omega)+m(\cY_k)_{\red})&=&v_y(g^*\mathrm{div}_{\cY}(\omega)+mg^*(\cY_k)_{\red})
\\ &=& v_y(\mathrm{div}_{\cY'}(\omega)-mK_{\cY'/\cY}+m(\cY'_k)_{\red})
\\ &<& v_y(\mathrm{div}_{\cY'}(\omega)+m(\cY'_k)_{\red}).
\end{eqnarray*}
 Applying the first part of the proof to the model $\cY'$, we get
$$v_y(\mathrm{div}_{\cY'}(\omega)+m(\cY'_k)_{\red})\leq
\weight_{\omega}(y)$$ and therefore
$$\weight_{\omega}(y')<\weight_{\omega}(y).$$
\end{proof}

\subsection{The weight function on a Berkovich skeleton}\label{sec-sncd}
\sss Let $X$ be a connected smooth proper $K$-scheme of dimension
$n$ and let $\omega$ be a non-zero rational $m$-pluricanonical
form on $X$.
 We suppose that $X$ has a proper
$sncd$-model $\cX$. Note that, at this point, we are not assuming
resolution of singularities,
 nor that any two proper $sncd$-models can be dominated by a common
one.
 We
denote by $E_i,\,i\in I$ the irreducible components of $\cX_k$
 and by $m\mu_i-m$ the multiplicity of $E_i$ in
$\mathrm{div}_{\cX}(\omega)$, for each $i\in I$.

\sss \label{sss-monweight} Let $x$ be a point of the Berkovich
skeleton $\Sk(\cX)$. Then we set
$$\weight_{\cX,\omega}(x)=v_x(\mathrm{div}(\omega)+m(\cX_k)_{\red}).$$ In this
way, we obtain a function
$$\weight_{\cX,\omega}\colon \Sk(\cX)\to \R$$ that we call the weight
function associated to $\cX$ and $\omega$.  It is clear from the
definition that, for each point $x$ of $\Sk(\cX)$, we have
$$\weight_{\cX,\omega^{\otimes d}}(x)=d\cdot\weight_{\cX,\omega}(x)$$
for all $d>0$ and
$$\weight_{\cX,f\omega}(x)=\weight_{\cX,\omega}(x)+v_x(f)$$ for
every non-zero rational function $f$ on $X$.

\begin{prop}\label{prop-monweight0}
The function $$\weight_{\cX,\omega}\colon \Sk(\cX)\to \R$$ is
piecewise affine.
 If $J$ is a non-empty subset of $I$, $\xi$ is a
generic point of $\cap_{j\in J}E_J$ and $\sigma$ is the closed
face of $\Sk(\cX)$ corresponding to $\xi$, then the restriction of
$\weight_{\cX,\omega}(x)$ to $\sigma$ is
\begin{itemize}
\item concave if $\xi$ is not contained in the closure of the
locus of poles of $\omega$ on $X$. \item convex if $\xi$ is not
contained in the closure of the locus of zeroes
 of $\omega$ on $X$.
 \item affine if $\xi$ is not contained in the closure of the locus of zeroes
and poles of $\omega$ on $X$.
\end{itemize}
\end{prop}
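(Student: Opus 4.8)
The plan is to reduce the statement to Proposition \ref{prop-pieceaff1} by trivialising the pluricanonical sheaf near $\xi$. Set
$$\Delta=\mathrm{div}_{\cX}(\omega)+m(\cX_k)_{\red},$$
so that $\weight_{\cX,\omega}(x)=v_x(\Delta)$ for every $x\in \Sk(\cX)$. I will show that on each closed face $\sigma$ the restriction $\weight_{\cX,\omega}|_\sigma$ is the sum of a function of the shape $f_h$ from Proposition \ref{prop-pieceaff1} and an affine function; the piecewise affinity and the entire trichotomy then follow from the corresponding properties of $f_h$, since adding an affine function preserves piecewise affinity, concavity, convexity and affinity.

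First I would fix $J$, $\xi$ and $\sigma$ as in the statement, with $J$ the set of all components of $\cX_k$ passing through $\xi$. Because $\omega_{\cX/R}$ is a line bundle and $\cX$ is regular, I may replace $\cX$ by an open neighbourhood $\cU$ of $\xi$ on which $\omega_{\cX/R}$ admits a generator $\eta$; as $\cU$ is open it contains every generisation of $\xi$, so $\red_{\cX}(x)\in \cU$ for all $x\in\sigma$, and this restriction alters neither $\weight_{\cX,\omega}|_\sigma$ nor the hypotheses on $\xi$. Writing $\omega=h\,\eta^{\otimes m}$ for a non-zero rational function $h$ on $X$, and using that the nowhere-vanishing section $\eta$ has trivial divisor on $\cU$, I get $\mathrm{div}_{\cX}(\omega)=\mathrm{div}_{\cX}(h)$ on $\cU$. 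Since every component of $\cX_k$ through $\red_{\cX}(x)$ lies among the $E_j$, $j\in J$, and $v_x(E_i)=0$ for every component not passing through $\red_{\cX}(x)$, I obtain for each $x\in\sigma$
\begin{equation*}
\weight_{\cX,\omega}(x)=v_x(h)+m\sum_{j\in J}v_x(E_j)=f_h(x)+m\sum_{j\in J}\frac{w_{E_j}}{N_j},
\end{equation*}
where the last term is affine with integer coefficients in the barycentric coordinates.

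It remains to feed $f_h$ into Proposition \ref{prop-pieceaff1}. This gives piecewise affinity of $f_h$ on $\sigma$, hence of $\weight_{\cX,\omega}|_\sigma$; global continuity on $\Sk(\cX)$ follows from Proposition \ref{prop-cont} applied to $\Delta$, since every point of $\Sk(\cX)$ is birational, so its image under $i$ is the generic point of $\cX$, which avoids $\mathrm{Supp}(\Delta)$. For the trichotomy I would transfer the zero and pole loci: since $\eta$ restricts to a generator of $\omega_{X/K}$ on $\cU_K$, the divisors of $h$ and of $\omega$ coincide on $\cU_K$, so $\xi$ lies in the closure of the pole (respectively zero) locus of $\omega$ on $X$ if and only if it lies in the closure of the pole (respectively zero) locus of $h$. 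Applying \eqref{item:concave}, \eqref{item:convex} and \eqref{item:affine} of Proposition \ref{prop-pieceaff1} to $f_h$ and adding the affine term then yields the concave, convex and affine cases, respectively.

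The step that I expect to require the most care is this last transfer of the zero/pole conditions between $\omega$ and $h$: I must ensure that the auxiliary generator $\eta$ introduces no spurious horizontal zeroes or poles near $\xi$ — which is exactly what being a genuine local generator of the invertible sheaf $\omega_{\cX/R}$ guarantees — and that shrinking to $\cU$ does not change the local condition of $\xi$ lying in the closure of the zero or pole locus. The remaining points, namely the linearity of the vertical contribution $m\sum_{j\in J}w_{E_j}/N_j$ and the fact that adding an affine function preserves each convexity property, are routine.
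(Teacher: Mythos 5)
Your proof is correct and takes essentially the same route as the paper: both trivialise the pluricanonical sheaf near $\xi$, write $\omega=h\cdot(\text{local generator})$, and reduce everything (continuity via Proposition \ref{prop-cont}, the face-by-face trichotomy via Proposition \ref{prop-pieceaff1}) to the rational function $h$. The only difference is cosmetic: the paper absorbs the vertical contribution into a single rational function, writing $\weight_{\cX,\omega}(x)=v_x\bigl(h\prod_{j\in J}x_j^m\bigr)$ on $\sigma$, whereas you keep $m\sum_{j\in J}w_{E_j}/N_j$ as a separate affine summand and note that adding an affine function preserves concavity, convexity and affinity.
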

\begin{proof}
The function $\weight_{\cX,\omega}$ is continuous, by Proposition
\ref{prop-cont}. The remaining properties can be checked on the
restriction of $\weight_{\cX,\omega}$ to a closed face $\sigma$ as
in the statement. We can write $\omega$ as $h\cdot \omega_0$ where
$h$ is a non-zero rational function on $X$ and $\omega_0$ is a
generator of the stalk of $\omega^{\otimes}_{\cX/R}$ at $\xi$. If
we choose, for each $j$ in $J$, a local equation $x_j=0$ for the
prime divisor $E_j$ on $\cX$ at $\xi$, then we have
$$\weight_{\cX,\omega}(x)=v_x(h\prod_{j\in J}x^m_j)$$ for every
$x$ in $\sigma$, so that the result follows from Proposition
\ref{prop-pieceaff1}.
\end{proof}

\begin{prop}\label{prop-monweight}\item
\begin{enumerate}
\item \label{item:div} If $x$ is a divisorial point on $\Sk(\cX)$,
then
$$\weight_{\cX,\omega}(x)=\weight_{\omega}(x).$$
 \item
\label{item:incr} If $x$ is a divisorial point on $X^{\an}$ such
that $\red_{\cX}(x)$ is not contained in the closure of the locus
of poles of $\omega$ on $X$, then
$$\weight_{\omega}(x)\geq \weight_{\cX,\omega}(\rho_{\cX}(x))$$
with equality if and only if $x\in \Sk(\cX)$.
 \item
\label{item:incr2} If $\cY$ is a proper $sncd$-model of $X$ and
$y$ is a point of $\Sk(\cY)$ such that $\red_{\cX}(y)$ is not
contained in the closure of the locus of poles of $\omega$ on $X$,
then
 $$\weight_{\cY,\omega}(y)\geq
\weight_{\cX,\omega}(\rho_{\cX}(y))$$  and equality can only occur
if $y\in \Sk(\cX)$.
 \item \label{item:indep} If $\cY$ is a proper $sncd$-model of $X$ and if $x$ is a point in
$\Sk(\cX)\cap \Sk(\cY)$, then
$$\weight_{\cX,\omega}(x)=\weight_{\cY,\omega}(x).$$
\end{enumerate}
\end{prop}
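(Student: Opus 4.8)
The four statements are naturally proved in order, each feeding the next.

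\textbf{Part \eqref{item:div}.} This is immediate: since $x$ is divisorial, Lemma \ref{lemm-divindep} applied to $\cX$ gives $\weight_{\omega}(x)=v_x(\mathrm{div}_{\cX}(\omega)+m(\cX_k)_{\red})$, and the right-hand side is exactly the definition of $\weight_{\cX,\omega}(x)$ in \eqref{sss-monweight}.

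\textbf{Part \eqref{item:incr}.} Write $D=\mathrm{div}_{\cX}(\omega)+m(\cX_k)_{\red}$ and $\zeta=\red_{\cX}(x)$. I would chain
\[
\weight_{\omega}(x)\ \geq\ v_x(D)\ \geq\ v_{\rho_{\cX}(x)}(D)\ =\ \weight_{\cX,\omega}(\rho_{\cX}(x)).
\]
The first inequality is Lemma \ref{lemm-ineq}, with equality exactly when $x\in\Sk(\cX)$. For the middle one, factor a local equation of $D$ at $\zeta$ as $u\cdot\prod_i z_i^{\mu_i}\cdot g$, where $z_i=0$ cuts out the component $E_i$ of $\cX_k$ through $\zeta$, the $\mu_i$ are the integers of \eqref{sss-mu}, $u$ is a unit, and $g$ is a local equation of the horizontal part of $D$; since $\zeta$ avoids the closure of the pole locus of $\omega$, the function $g$ lies in $\mathcal{O}_{\cX,\zeta}$. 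Now $v_x(u)=v_{\rho_{\cX}(x)}(u)=0$, while $v_x(z_i)=v_x(E_i)=v_{\rho_{\cX}(x)}(z_i)$ by the very definition of $\rho_{\cX}$ in \eqref{sss-rho}, and $v_x(g)\geq v_{\rho_{\cX}(x)}(g)$ by Proposition \ref{prop-monmin}. Summing yields the middle inequality. For the equality clause: if $x\in\Sk(\cX)$ then $\rho_{\cX}(x)=x$ and part \eqref{item:div} gives equality throughout; conversely equality in the chain forces equality in Lemma \ref{lemm-ineq}, hence $x\in\Sk(\cX)$.

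\textbf{Part \eqref{item:incr2}.} Here $y$ may be non-divisorial, and I would argue by approximation. Both $\weight_{\cY,\omega}$ and $\weight_{\cX,\omega}\circ\rho_{\cX}$ are continuous (indeed piecewise affine) on $\Sk(\cY)$, the first by Proposition \ref{prop-monweight0} and the second by Propositions \ref{prop-pieceaff2} and \ref{prop-monweight0} together with the continuity of $\rho_{\cX}$ from \eqref{sss-rho}. For a divisorial $y'\in\Sk(\cY)$ satisfying the same pole condition, part \eqref{item:div} for $\cY$ and part \eqref{item:incr} for $\cX$ give $\weight_{\cY,\omega}(y')=\weight_{\omega}(y')\geq\weight_{\cX,\omega}(\rho_{\cX}(y'))$. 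It remains to produce such points accumulating at $y$. Writing $\overline{P}$ for the (effective, horizontal) closure in $\cX$ of the pole locus of $\omega$, the center $\red_{\cX}(z)$ lies on $\overline{P}$ precisely when $v_z(\overline{P})>0$; thus the pole condition is the vanishing of $z\mapsto v_z(\overline{P})$, a function that is nonnegative (effectivity), continuous (Proposition \ref{prop-cont}), and piecewise affine with rational slopes (Proposition \ref{prop-pieceaff1}, since locally $v_z(\overline{P})=v_z(g)$ for a defining function $g$). The zero locus of such a function is a rational polyhedral subcomplex of $\Sk(\cY)$ containing $y$, in which divisorial points are dense; taking $y'\to y$ among them and passing to the limit gives $\weight_{\cY,\omega}(y)\geq\weight_{\cX,\omega}(\rho_{\cX}(y))$ (if $y$ is itself an isolated, hence rational, point of this locus, apply part \eqref{item:incr} directly). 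For the equality clause, refine so that both sides are affine on a cell $Q$ with $y$ in its relative interior; if $y\notin\Sk(\cX)$ the cell is not contained in $\Sk(\cX)$, so it carries divisorial points off $\Sk(\cX)$ near $y$, where the inequality is strict by part \eqref{item:incr}. Since an affine function that is nonnegative on $Q$ and vanishes at the interior point $y$ must vanish identically, this would contradict the strict positivity just found; hence equality forces $y\in\Sk(\cX)$.

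\textbf{Part \eqref{item:indep}.} Fix $x\in\Sk(\cX)\cap\Sk(\cY)$, so that $\rho_{\cX}(x)=\rho_{\cY}(x)=x$. Replacing $\omega$ by $f\omega$ for a nonzero rational function $f$ changes both $\weight_{\cX,\omega}(x)$ and $\weight_{\cY,\omega}(x)$ by the same quantity $v_x(f)$, hence preserves their difference; choosing $f$ to cancel the poles of $\omega$ near $\red_{\cX}(x)$, I may apply part \eqref{item:incr2} to obtain $\weight_{\cY,\omega}(x)\geq\weight_{\cX,\omega}(x)$, and choosing $f$ instead near $\red_{\cY}(x)$ and applying part \eqref{item:incr2} with the roles of $\cX$ and $\cY$ exchanged gives the reverse inequality. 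Hence the two weights agree.

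\textbf{Main obstacle.} The delicate point is the accumulation step in part \eqref{item:incr2}: one must know that divisorial points satisfying the $\cX$-pole condition are dense near an arbitrary $y\in\Sk(\cY)$. The clean way to see this is to recognize the pole condition as the zero locus of the nonnegative rational piecewise-affine function $z\mapsto v_z(\overline{P})$, so that it is automatically a rational polyhedral set; verifying this piecewise-affineness on $\Sk(\cY)$ for a divisor living on the \emph{other} model $\cX$ is where the real work lies, since no common $sncd$-model of $\cX$ and $\cY$ is assumed to be available.
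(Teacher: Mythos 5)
Parts \eqref{item:div}, \eqref{item:incr} and \eqref{item:indep} of your proposal are correct and essentially reproduce the paper's own arguments: \eqref{item:div} is Lemma \ref{lemm-divindep}; your chain in \eqref{item:incr} repackages the paper's reduction (Lemma \ref{lemm-ineq} for the first inequality, Proposition \ref{prop-monmin} applied to the horizontal factor for the second); and \eqref{item:indep} is the same twist-by-$f$ symmetry argument. The genuine gap is in part \eqref{item:incr2}, at exactly the step you label the main obstacle: the claim that $z\mapsto v_z(\overline{P})$ is piecewise affine on $\Sk(\cY)$ does not follow from Proposition \ref{prop-pieceaff1}. That proposition applies to $v_{\cdot}(h)$ for a single rational function $h$ on $X$, whereas $\overline{P}$ is not globally principal; the locus of $z\in\Sk(\cY)$ on which $v_z(\overline{P})=v_z(g)$ for one fixed local equation $g$ of $\overline{P}$ on an open $\cU\subset\cX$ is $\red_{\cX}^{-1}(\cU_k)\cap\Sk(\cY)$, which by anti-continuity of $\red_{\cX}$ is merely a \emph{closed} subset of $\Sk(\cY)$, with no polyhedral structure. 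The decomposition of $\Sk(\cY)$ according to where the $\cX$-center lies is precisely what one cannot control here, since the paper pointedly does not assume that $\cX$ and $\cY$ admit a common $sncd$-refinement; and gluing piecewise affine functions along closed, non-polyhedral pieces need not produce a piecewise affine function. Consequently neither the density of divisorial points in the pole-condition locus (needed for your limiting argument) nor the existence of your cell $Q$ (needed for the equality clause) is established; flagging the hole does not close it.

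The paper's proof of \eqref{item:incr2} is engineered to avoid this very issue: it never shows that $Z=\{z\in\Sk(\cY)\mid \red_{\cX}(z)\notin \overline{P}\}$ is polyhedral. Instead, for the given point $y$ it builds a smaller set $S$ with $y\in S\subset Z$ in which divisorial points are visibly dense. Let $\xi=\red_{\cX}(y)$ and let $C$ be the closure of $\{\xi\}$ in $\cX$; choose finitely many affine opens $\cU_1,\ldots,\cU_r$ of $\cX$, each containing $\xi$ and together covering $C$, and regular functions $f_i\in\mathcal{O}(\cU_i)$ whose zero sets are $\cU_i\cap \overline{P}$; then set $S=\{z\in\red_{\cX}^{-1}(C)\mid v_z(f_i)=0,\ i=1,\ldots,r\}$. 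Two points make this work where your argument stalls: $\red_{\cX}^{-1}(C)$ is \emph{open} in $\Sk(\cY)$ (anti-continuity applied to the closed set $C$, rather than to opens as in your decomposition), and each $f_i$ is an honest rational function on $X$, so that $v_{\cdot}(f_i)$ is genuinely piecewise affine on $\Sk(\cY)$ by Proposition \ref{prop-pieceaff1}. Hence $S$ is the intersection of an open set with finitely many rational polyhedral zero loci, rational points of $\Sk(\cY)$ are dense in $S$, and such points are divisorial by Proposition \ref{prop-rank1}. This supplies the density of divisorial points of $Z$ near $y$; with it, your limiting argument and your affine-function argument for the equality clause do go through, in parallel with the paper's treatment of the equality locus $Z'$ via the piecewise affineness of $\weight_{\cY,\omega}$ and $\weight_{\cX,\omega}\circ\rho_{\cX}$.
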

\begin{proof}
\eqref{item:div} This follows from Lemma \ref{lemm-divindep}.

\eqref{item:incr} We may assume that $\red_{\cX}(x)$ is not
contained in the closure of the zero locus
 of $\omega$ on $X$, by dividing $\omega$ by a suitable element of
 $\mathcal{O}_{\cX,\zeta}$ and invoking Proposition
 \ref{prop-monmin}.
 Then it follows from Lemma \ref{lemm-ineq} that
$$\weight_{\omega}(x)\geq v_x(\sum_{i\in
I}(\mu_i-m)E_i+m(\cX_k)_{\red})
=\weight_{\cX,\omega}(\rho_{\cX}(x))$$ with equality if and only
if $x$ belongs to $\Sk(\cX)$.

\eqref{item:incr2} Denote by $P$ the set of points of $\cX_k$ that
belong to the closure of the locus of poles of $\omega$ on $X$.
This is a closed subset of $\cX_k$. Let $Z$ be the set of points
$y'$ on $\Sk(\cY)$ such that $\red_{\cX}(y')$ does not lie in $P$.
This is a closed subset of $\Sk(\cY)$ containing $y$, by
anti-continuity of the reduction map $\red_{\cX}$. We claim that
the divisorial points in $Z$ are dense in $Z$. Accepting this
claim for now, it suffices to prove the inequality in the
statement in the case where $y$ is divisorial, by continuity of
$\weight_{\cX,\omega}$, $\weight_{\cY,\omega}$  and $\rho_{\cX}$.
But when $y$ is divisorial the inequality follows from
\eqref{item:div} and \eqref{item:incr}. Likewise, if we denote by
$Z'$ the locus of points $y'$ in $Z$ such that
$$\weight_{\cY,\omega}(y')=\weight_{\cX,\omega}(\rho_{\cX}(y')),$$ then the divisorial points in $Z'$ form
 a dense subset, because the functions $\weight_{\cY,\omega}$ and $\weight_{\cX,\omega}\circ
 \rho_{\cX}$ are piecewise affine on $\Sk(\cY)$, by Propositions
 \ref{prop-pieceaff2} and \ref{prop-monweight0}. Point \eqref{item:incr} implies that
 $\mathrm{Div}(X)\cap Z'$ is contained in $\Sk(\cX)$, so that
 $Z'$ is contained in $\Sk(\cX)$.

 It remains to prove our claim that the set of divisorial points in $Z$ is dense in
 $Z$. Since $y$ is an arbitrary point of $Z$, it is enough to show that $y$ belongs to the closure of the set of divisorial points in $Z$.
 Denote by $\xi$ the center of $v_y$ on $\cX$ and by $C$ the closure of $\{\xi\}$ in $\cX$, endowed with
 its reduced induced structure. We choose finitely many affine open
neighbourhoods $\cU_1,\ldots,\cU_r$ of $\xi$ in $\cX$ such that
$C$ is contained in their union. For each $i$ in $\{1,\ldots,r\}$,
we choose a regular function $f_i$ in $\mathcal{O}(\cU_i)$ such
that the support of the closed subscheme of $\cU_i$ defined by
$f_i=0$ is equal to $\cU_i\cap P$. Then the set
$$S=\{z\in \red_{\cX}^{-1}(C)\,|\,v_z(f_i)=0\mbox{ for }i=1,\ldots,r\}$$ is
a locally closed subset of $Z$ containing $y$. The divisorial
points in $S$ form a dense subset of $S$, because
$\red_{\cX}^{-1}(C)$ is open in $\Sk(\cY)$ and the functions
$$\Sk(\cY)\to \R,\, z\to v_z(f_i)$$ are piecewise affine. It follows
that $y$ belongs to the closure in $Z$ of the set of divisorial
points in $Z$.

 \eqref{item:indep}
 Since the statement is symmetric in $\cX$ and $\cY$, it is enough
 to prove that
 $$\weight_{\cY,\omega}(x)\geq \weight_{\cX,\omega}(x).$$
 Multiplying $\omega$ by a suitable non-zero
 rational function on $X$, we can reduce to the case where the center of $v_x$ on
$\cX$ is not contained in the closure of the locus of poles of
$\omega$ on $X$. In that case, the result follows from
\eqref{item:incr2}.
\end{proof}

\subsection{The weight function on $X^{\an}$}

\sss\label{sss-assu} Let $X$ be a connected smooth $K$-scheme of
dimension $n$.
%
 We will show how weight functions can
be defined on the whole space  $X^{\an}$.
 Throughout this section, we make the following assumption: there
exists a smooth compactification $\overline{X}$ such that, for
every proper $R$-model $\overline{\cX}$ of $\overline{X}$, there
exists a morphism of $R$-models $h:\overline{\cX}'\to
\overline{\cX}$ such that $\overline{\cX}'$ is a proper
$sncd$-model and $h$ is an isomorphism over the open subscheme of
$\overline{\cX}$ consisting of the points where $\overline{\cX}$
is regular and $\overline{\cX}_k$ is a divisor with strict normal
crossings. This assumption is satisfied when $k$ has
characteristic zero or $X$ is a curve.

\begin{prop}\label{prop-assu} Let $x$ be a point of $X^{\an}$.
Every proper $R$-model $\overline{\cY}$ of $\overline{X}$ can be
dominated by a proper $sncd$-model $\overline{\cX}$ that has an
open subscheme $\cX$ such that $\cX$ is an $sncd$-model for $X$
and
 $\widehat{\cX}_\eta$ is a neighbourhood of
$x$ in $X^{\an}$. If $x$ is monomial, we can moreover arrange that
$x\in \Sk(\cX)$.
\end{prop}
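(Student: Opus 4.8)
The plan is to dispose of the general case first and then refine the construction when $x$ is monomial, working throughout inside $\overline{X}^{\an}$ and using that $X^{\an}$ is the open subspace $(\overline{X}\smallsetminus Z)^{\an}$, where $Z=\overline{X}\smallsetminus X$ is the boundary. The key geometric input is the \emph{closure-disjointness} supplied by Corollary~\ref{cor-ZR}: applied to $Z\subset\overline{X}$ and to $x$, it produces a proper $R$-model $\overline{\cU}$ of $\overline{X}$ such that the closure of $\red_{\overline{\cU}}(x)$ in $\overline{\cU}_k$ is disjoint from the closure $\overline{Z}_{\overline{\cU}}$ of $Z$ in $\overline{\cU}$. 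I would first record that this property propagates under domination: if $g\colon \overline{\cX}\to\overline{\cU}$ is a morphism of proper $R$-models of $\overline{X}$, then $\red_{\overline{\cU}}(x)=g(\red_{\overline{\cX}}(x))$, the boundaries satisfy $\overline{Z}_{\overline{\cX}}\subseteq g^{-1}(\overline{Z}_{\overline{\cU}})$ (because $g$ is the identity on $\overline{X}$), and $g$ carries the closure of $\red_{\overline{\cX}}(x)$ into the closure of $\red_{\overline{\cU}}(x)$; hence disjointness on $\overline{\cU}$ forces disjointness on $\overline{\cX}$.

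Using the cofilteredness of the category of proper $R$-models of $\overline{X}$ (Section~\ref{subsec-ZR}), I would choose a proper $R$-model $\overline{\cV}$ dominating both $\overline{\cY}$ and $\overline{\cU}$, and then invoke the standing resolution hypothesis \eqref{sss-assu} to dominate $\overline{\cV}$ by a proper $sncd$-model $\overline{\cX}$. By the propagation above, the closure of $\zeta:=\red_{\overline{\cX}}(x)$ is still disjoint from $\overline{Z}_{\overline{\cX}}$. Now set $\cX=\overline{\cX}\smallsetminus\overline{Z}_{\overline{\cX}}$. Its generic fiber is $\overline{X}\smallsetminus Z=X$, and as an open subscheme of $\overline{\cX}$ it is regular with strict normal crossings special fiber, so $\cX$ is an $sncd$-model for $X$ and $\overline{\cX}$ dominates $\overline{\cY}$. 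Since $\zeta\notin\overline{Z}_{\overline{\cX}}$ we have $\zeta\in\cX_k$, so $x\in\red_{\overline{\cX}}^{-1}(\zeta)\subseteq\red_{\overline{\cX}}^{-1}(\cX_k)=\widehat{\cX}_\eta$. Finally $\overline{\{\zeta\}}$ is a Zariski-closed subset of $\overline{\cX}_k$ contained in $\cX_k$ (by disjointness), so by anti-continuity of the reduction map $\red_{\overline{\cX}}^{-1}(\overline{\{\zeta\}})$ is an \emph{open} subset of $\overline{X}^{\an}$; it contains $x$ and is contained in $\widehat{\cX}_\eta$, which proves that $\widehat{\cX}_\eta$ is a neighbourhood of $x$.

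For the monomial refinement I would feed an adapted model into the same machine. Since $x$ is monomial, choose an $sncd$-model $\cX_0$ of $X$ adapted to $x$, so that $x\in\Sk(\cX_0)$. Gluing $\overline{X}$ to the generic fiber of $\cX_0$ (as in the proof of Proposition~\ref{prop-dense}) realizes $\cX_0$ as the boundary-open part of an $R$-model of $\overline{X}$; applying Nagata compactification, then the resolution hypothesis, and then dominating once more by a model furnished by Corollary~\ref{cor-ZR}, I would arrange a proper $sncd$-model $\overline{\cX}_0$ of $\overline{X}$ with $\cX_0=\overline{\cX}_0\smallsetminus\overline{Z}_{\overline{\cX}_0}$ and with the closure of $\red_{\overline{\cX}_0}(x)$ disjoint from $\overline{Z}_{\overline{\cX}_0}$. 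Including $\overline{\cX}_0$ among the models dominated in the previous paragraph yields $\overline{\cX}$ together with a morphism $g\colon\overline{\cX}\to\overline{\cX}_0$, and I set $\cX=g^{-1}(\cX_0)$. This is again an $sncd$-model of $X$ (its generic fiber is $g_K^{-1}(X)=X$), and the restriction $\cX\to\cX_0$ of $g$ is proper, being the base change of $g$ to the open subscheme $\cX_0$; hence Proposition~\ref{prop-domin} gives $\Sk(\cX_0)\subseteq\Sk(\cX)$ and therefore $x\in\Sk(\cX)$. The neighbourhood property is checked as before: $g$ carries $\overline{\{\zeta\}}$ into the closure of $\red_{\overline{\cX}_0}(x)$, which lies in $\cX_0$ by the disjointness arranged on $\overline{\cX}_0$, so $\overline{\{\zeta\}}\subseteq g^{-1}(\cX_0)=\cX$ and $\red_{\overline{\cX}}^{-1}(\overline{\{\zeta\}})$ is again an open neighbourhood of $x$ inside $\widehat{\cX}_\eta$.

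The main obstacle is the bookkeeping in the monomial case, where two requirements must hold at once: $\cX$ must dominate an $sncd$-model adapted to $x$, so that Proposition~\ref{prop-domin} pulls $x$ onto $\Sk(\cX)$, while the closure of $\red(x)$ must stay off the boundary, so that $\widehat{\cX}_\eta$ remains a neighbourhood. Reconciling these forces one to control the closure of the center \emph{both} on $\overline{\cX}_0$ and on the final $\overline{\cX}$, which is exactly why the closure-disjointness of Corollary~\ref{cor-ZR} has to be invoked and propagated at both stages rather than only once. A secondary technical point is the extension of the open $sncd$-model $\cX_0$ of $X$ to a proper $sncd$-model of $\overline{X}$ with prescribed boundary-open part: this relies on Nagata compactification together with the resolution hypothesis, and one must verify that the gluing introduces no boundary component meeting $\cX_0$ in the special fiber, which follows from the fact that the generic fiber of $\cX_0$ is already $X$ and hence disjoint from $Z$.
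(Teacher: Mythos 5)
Your first two paragraphs (the non-monomial case) are correct and are, modulo presentation, the paper's own argument: Corollary \ref{cor-ZR} supplies the disjointness, cofilteredness of proper models together with the hypothesis \eqref{sss-assu} produces a proper $sncd$-model $\overline{\cX}$ dominating $\overline{\cY}$, your propagation lemma (which the paper leaves implicit) carries the disjointness up to $\overline{\cX}$, and anti-continuity of the reduction map gives the neighbourhood claim for $\cX=\overline{\cX}\smallsetminus\overline{Z}_{\overline{\cX}}$.

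The monomial case, however, rests on a claim your construction cannot deliver: that the \emph{originally chosen} adapted model $\cX_0$ can be arranged to equal $\overline{\cX}_0\smallsetminus\overline{Z}_{\overline{\cX}_0}$ exactly. Two independent things go wrong. First, the last step in building $\overline{\cX}_0$ --- passing to a common refinement with the model furnished by Corollary \ref{cor-ZR} and re-resolving --- is a proper modification whose centres may lie over $\cX_0$; afterwards $\cX_0$ need not be an open subscheme of $\overline{\cX}_0$ at all, it is only properly dominated by one. Second, even when $\cX_0$ does survive as an open subscheme, the boundary complement is in general strictly larger than $\cX_0$, because compactification and resolution add special-fibre points not lying on the closure of $Z$: take $X=\A^1_K$, $\overline{X}=\Pro^1_K$, and let $\cX_0$ be $\A^1_R$ with the origin of its special fibre deleted (an $sncd$-model adapted to the divisorial point of its special fibre); then gluing, compactifying and resolving can produce $\overline{\cX}_0=\Pro^1_R$, whose boundary complement is all of $\A^1_R\supsetneq\cX_0$. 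This is not cosmetic, since you use the exact equality twice in an essential way: to convert ``the closure of $\red_{\overline{\cX}_0}(x)$ avoids $\overline{Z}_{\overline{\cX}_0}$'' into ``that closure lies in $\cX_0$'', and to assert $x\in\Sk(\cX_0)$.

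The repair is short and lands on the paper's actual proof. Define $\cX_0:=\overline{\cX}_0\smallsetminus\overline{Z}_{\overline{\cX}_0}$; the needed containment then holds by definition, and $x\in\Sk(\cX_0)$ follows from one extra application of Proposition \ref{prop-domin}: writing $\cW_0$ for the original adapted model, open in the compactified-and-resolved model $\cW$, and $g_0\colon\overline{\cX}_0\to\cW$ for the domination, the map $g_0^{-1}(\cW_0)\to\cW_0$ is a proper morphism of $sncd$-models of $X$, and $g_0^{-1}(\cW_0)\subseteq\cX_0$ because $g_0$ sends the closure of $Z$ into the closure of $Z$ in $\cW$, which misses the open subscheme $\cW_0$; hence $x\in\Sk(\cW_0)\subseteq\Sk(g_0^{-1}(\cW_0))\subseteq\Sk(\cX_0)$, and your pullback step $\cX=g^{-1}(\cX_0)$ then goes through verbatim. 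The paper avoids this bookkeeping by putting $x$ into the skeleton of a \emph{proper} $sncd$-model $\overline{\cY}$ at the outset; then $\Sk(\overline{\cY})\subseteq\Sk(\overline{\cX})$ holds outright by Proposition \ref{prop-domin}, and $x\in\Sk(\cX)$ follows simply from $\red_{\overline{\cX}}(x)\in\cX_k$, with no need to track an open model through the construction.
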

\begin{proof}
If $x$ is monomial, we can first choose an $sncd$-model $\cY'$ of
$X$ such that $x$ lies in
 $\mathrm{Sk}(\cY)$ and compactify it to an $R$-model $\overline{\cY}'$ of
 $\overline{X}$. Replacing $\overline{\cY}$ by a proper $R$-model
 of $\overline{X}$ that dominates both $\overline{\cY}$ and
 $\overline{\cY}'$, we can assume that $x$ lies in
 $\Sk(\overline{\cY})$, by Proposition \ref{prop-domin}.

We set $Z=\overline{X}\smallsetminus X$ and we endow this closed subset
of $\overline{X}$ with its reduced induced structure. By Corollary
\ref{cor-ZR}, we can dominate $\overline{\cY}$ by a proper
$sncd$-model $\overline{\cX}$ of $\overline{X}$ such that the
closure $C$ of $\red_{\overline{\cX}}(x)$ in $\overline{\cX_k}$ is
disjoint from the closure of $Z$ in $\overline{\cX}$. The set
$\red^{-1}_{\overline{\cX}}(C)$ is open in $X^{\an}$ by
anti-continuity of the reduction map. Removing from
$\overline{\cX}$ the closure of $Z$, we get an open subscheme
$\cX$ which is an $sncd$-model of $X$ and such that
 $\widehat{\cX}_\eta$ is a neighbourhood of $x$ in $X^{\an}$.
 If $x$ lies in $\Sk(\overline{\cY})$, then it will also lie in
 $\Sk(\cX)$, by Proposition \ref{prop-domin}.
  \end{proof}

\begin{prop}\label{prop-wmon}
Let $\omega$ be a non-zero rational $m$-pluricanonical form on
$X$. There exists a unique function
$$\weight_{\omega}\colon \mathrm{Mon}(X)\to \Q$$ on the set of
monomial points of $X^{\an}$ such that
$$\weight_{\omega}(x)=v_x(\mathrm{div}_{\cX}(\omega)+m(\cX_k)_{\red})$$
for every monomial point $x$ of $X$ and every $sncd$-model $\cX$
of $X$ for which $x\in \Sk(\cX)$.
\end{prop}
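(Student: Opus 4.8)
The plan is to let the formula $v_x(\mathrm{div}_{\cX}(\omega)+m(\cX_k)_{\red})$ serve as the \emph{definition} of $\weight_\omega(x)$, so that existence and uniqueness are really one statement: the existence of at least one $sncd$-model $\cX$ with $x\in\Sk(\cX)$ is furnished by Proposition \ref{prop-assu}, and the entire content is to check that the value of the formula is independent of the chosen $\cX$. The key observation is that this independence has already been proved for proper $sncd$-models of a \emph{proper} base, namely in Proposition \ref{prop-monweight}\,\eqref{item:indep}. I would therefore reduce the general case to that one by passing to the smooth compactification $\overline{X}$ of \eqref{sss-assu}, on which $\omega$ remains a non-zero rational $m$-pluricanonical form, so that the weight function $\weight_{\overline{\cX},\omega}$ of Section \ref{sec-sncd} is available for every proper $sncd$-model $\overline{\cX}$ of $\overline{X}$.

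Concretely, given an $sncd$-model $\cX$ of $X$ with $x\in\Sk(\cX)$, I would first glue $\cX$ to $\overline{X}$ along $X$ and apply a standard compactification of models to obtain a proper model of $\overline{X}$ into which $\cX$ embeds as an open subscheme over which the model is still $sncd$. The resolution hypothesis \eqref{sss-assu} then produces a proper $sncd$-model $\overline{\cX}$ of $\overline{X}$ dominating it through a morphism that is an isomorphism over the regular locus with strict normal crossings, hence over $\cX$; this realizes $\cX$ as an open subscheme of $\overline{\cX}$. Since $x\in\Sk(\cX)$ we have $\red_{\overline{\cX}}(x)\in\cX$, and the description of skeleta of open subschemes in \eqref{sss-Bsk} gives $\Sk(\cX)=\red_{\overline{\cX}}^{-1}(\cX_k)\cap\Sk(\overline{\cX})$, so that $x\in\Sk(\overline{\cX})$. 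Because $\mathrm{div}_{\overline{\cX}}(\omega)$ and $(\overline{\cX}_k)_{\red}$ restrict on $\cX$ to $\mathrm{div}_{\cX}(\omega)$ and $(\cX_k)_{\red}$, and the value $v_x(\cdot)$ of a divisor depends only on a local equation at the center $\red_{\overline{\cX}}(x)\in\cX$, the boundary components contribute nothing and
\[
\weight_{\overline{\cX},\omega}(x)=v_x\bigl(\mathrm{div}_{\cX}(\omega)+m(\cX_k)_{\red}\bigr).
\]

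With this reduction the argument closes quickly: given two $sncd$-models $\cX_1,\cX_2$ of $X$ with $x\in\Sk(\cX_1)\cap\Sk(\cX_2)$, the construction attaches to each a proper $sncd$-model $\overline{\cX}_i$ of $\overline{X}$ with $x\in\Sk(\overline{\cX}_i)$ and $\weight_{\overline{\cX}_i,\omega}(x)=v_x(\mathrm{div}_{\cX_i}(\omega)+m(\cX_{i,k})_{\red})$. Proposition \ref{prop-monweight}\,\eqref{item:indep}, applied to the two proper $sncd$-models $\overline{\cX}_1,\overline{\cX}_2$ of $\overline{X}$ and the common point $x$ of their skeleta, yields $\weight_{\overline{\cX}_1,\omega}(x)=\weight_{\overline{\cX}_2,\omega}(x)$, so the two formula values coincide and $\weight_\omega(x)$ is well-defined. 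I expect the main obstacle to be precisely the compactification bookkeeping of the second paragraph: producing a proper $sncd$-model of $\overline{X}$ that restricts to the given $\cX$ near the center of $x$, and verifying that passing to it alters neither the membership $x\in\Sk$ (controlled by \eqref{sss-Bsk} together with Proposition \ref{prop-domin}) nor the value of the weight formula (controlled by the locality of $v_x$ at the center). Everything beyond this is a direct appeal to results already in place.
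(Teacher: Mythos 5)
Your proposal is correct and takes essentially the same route as the paper: the paper's proof is a one-line reduction, compactifying $\cX$ to a proper $sncd$-model of $\overline{X}$ (using the standing assumption \eqref{sss-assu}) and then invoking the model-independence statement \eqref{item:indep} of Proposition \ref{prop-monweight}. You have merely made explicit the gluing/compactification bookkeeping and the locality of $v_x$ at the center, which the paper leaves implicit.
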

\begin{proof}
This follows immediately from Proposition \ref{prop-monweight},
since we can compactify $\cX$ to a proper $sncd$-model of
$\overline{X}$.
\end{proof}

\sss \label{sss-weightext} Let $\omega$ be a non-zero regular
 $m$-pluricanonical form on $X$, for some $m>0$, and let $x$ be a point
of $X^{\an}$. We set
$$\weight_{\omega}(x)=\sup_{\cX}\{\weight_{\omega}(\rho_{\cX}(x))\}\in
\R\cup\{+\infty\}$$ where the supremum is taken over all
$sncd$-models $\cX$ of $X$ such that $x$ lies in
$\widehat{\cX}_\eta$. In this way, we obtain a function
$$\weight_{\omega}\colon X^{\an}\to \R\cup \{+\infty\}.$$

\begin{prop}\label{prop-weightext}\item
\begin{enumerate}
\item \label{item:semico} The function
$$\weight_{\omega}\colon X^{\an}\to \R\cup \{+\infty\}$$ is
lower semi-continuous. \item \label{item:compat} We have
$$\weight_{\omega}(x)=v_x(\mathrm{div}_{\cX}(\omega)+m(\cX_k)_{\red})$$
 for every  monomial point $x$ on $X^{\an}$ and every $sncd$-model $\cX$ of $X$  such that $x\in \Sk(\cX)$. In particular, the definition in \eqref{sss-weightext}
 agrees with the one in \eqref{sss-divweight} for divisorial
 points and with the one in Proposition \ref{prop-wmon} for points on the skeleton of an $sncd$-model of $X$.
 \item \label{item:supeq} For every point $x$ of $X^{\an}$ and every
$sncd$-model $\cX$ of $X$ such that $x\in \widehat{\cX}_\eta$, we
have
$$\weight_{\omega}(x)\geq \weight_{\omega}(\rho_{\cX}(x))$$ and
equality holds if and only if $x$ lies in $\Sk(\cX)$.

\item \label{item:birat} If $h:Y\to X$ is an open immersion, then
$$\weight_{h^*\omega}(y)=\weight_{\omega}(h(y))$$ for every point
$y$ on $Y^{\an}$.

\item \label{item:pluri} For each point $x$ in $X^{\an}$, we have
$$\weight_{\omega^{\otimes d}}(x)=d\cdot \weight_{\omega}(x)$$ for
all integers $d>0$ and
$$\weight_{f\omega}(x)=\weight_{\omega}(x)+v_x(f)$$ for all
 regular functions $f\neq 0$ on $X$.
\end{enumerate}
\end{prop}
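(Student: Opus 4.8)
Throughout, recall that $\omega$ is \emph{regular}, so the closure on a model of the locus of poles of $\omega$ on $X$ is empty and every hypothesis of the form ``$\red_{\cX}(\cdot)$ lies outside the closure of the pole locus'' appearing in Proposition \ref{prop-monweight} is automatically satisfied. For an $sncd$-model $\cX$ with $x\in\widehat{\cX}_\eta$ write $g_{\cX}(x)=\weight_{\cX,\omega}(\rho_{\cX}(x))$; since $\rho_{\cX}(x)\in\Sk(\cX)$, Proposition \ref{prop-wmon} gives $g_{\cX}(x)=\weight_{\omega}(\rho_{\cX}(x))$, so that by definition $\weight_{\omega}(x)=\sup_{\cX}g_{\cX}(x)$. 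The basic mechanism is that $g_{\cX}$ is nondecreasing under domination: if $\cX'$ dominates $\cX$ then Proposition \ref{prop-domin} gives $\rho_{\cX}\circ\rho_{\cX'}=\rho_{\cX}$ and $\Sk(\cX)\subset\Sk(\cX')$, and applying part \eqref{item:incr2} of Proposition \ref{prop-monweight} to $y=\rho_{\cX'}(x)\in\Sk(\cX')$ yields $g_{\cX'}(x)\geq g_{\cX}(x)$, with equality only if $\rho_{\cX'}(x)\in\Sk(\cX)$. Since any two models admit a common dominating $sncd$-model (compactify both to proper models of $\overline{X}$, dominate, and apply Proposition \ref{prop-assu} to recover an open $sncd$-model of $X$ whose generic fibre is a neighbourhood of $x$), the supremum may be computed along the models dominating any fixed $\cX$. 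All applications of Proposition \ref{prop-monweight} below are understood on such a compactification, the relevant reductions being kept off the boundary $\overline{X}\smallsetminus X$ by Corollary \ref{cor-ZR}, so that the pole locus of the extended form is avoided.

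I would prove \eqref{item:compat} first. For a monomial $x\in\Sk(\cX)$ the model $\cX$ itself occurs in the supremum and $\rho_{\cX}(x)=x$, whence $\weight_{\omega}(x)\geq g_{\cX}(x)=\weight_{\cX,\omega}(x)$. For the reverse inequality let $\cX''$ be any competing model and let $\cX'$ dominate both $\cX$ and $\cX''$. Then $x\in\Sk(\cX)\subset\Sk(\cX')$, so $\rho_{\cX'}(x)=x$ and $g_{\cX'}(x)=\weight_{\cX',\omega}(x)$; as $x\in\Sk(\cX)\cap\Sk(\cX')$, part \eqref{item:indep} of Proposition \ref{prop-monweight} gives $\weight_{\cX',\omega}(x)=\weight_{\cX,\omega}(x)$. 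With the monotonicity $g_{\cX''}(x)\leq g_{\cX'}(x)$ this yields $g_{\cX''}(x)\leq\weight_{\cX,\omega}(x)$, hence $\weight_{\omega}(x)=\weight_{\cX,\omega}(x)$. This establishes \eqref{item:compat}, and in particular its agreement with \eqref{sss-divweight} and with Proposition \ref{prop-wmon}.

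For \eqref{item:supeq}, the inequality $\weight_{\omega}(x)\geq\weight_{\omega}(\rho_{\cX}(x))=g_{\cX}(x)$ is the defining supremum evaluated at $\cX$, and $x\in\Sk(\cX)$ forces $\rho_{\cX}(x)=x$ and hence equality. The substance is the strict inequality when $x\notin\Sk(\cX)$. Computing $\weight_{\omega}(x)$ as the supremum of $g_{\cX'}(x)$ over $\cX'$ dominating $\cX$, an equality $\weight_{\omega}(x)=g_{\cX}(x)$ would force $g_{\cX'}(x)=g_{\cX}(x)$ for every such $\cX'$, so that the equality clause of part \eqref{item:incr2} of Proposition \ref{prop-monweight} gives $\rho_{\cX'}(x)\in\Sk(\cX)$, i.e. $\rho_{\cX'}(x)=\rho_{\cX}(x)$, for all $\cX'$ dominating $\cX$. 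It then remains to deduce $x=\rho_{\cX}(x)$, contradicting $x\notin\Sk(\cX)$. When $x$ is monomial this is immediate: Proposition \ref{prop-assu} produces $\cX'$ dominating $\cX$ with $x\in\Sk(\cX')$, so $\rho_{\cX'}(x)=x\notin\Sk(\cX)$ already and the strict inequality holds. For a general $x$ I would recover $x$ from its retractions: applying Proposition \ref{prop-monmin} to $h$ and to $h^{-1}$ for functions $h$ that are units at $\red_{\cX}(x)$ gives $|h(x)|=|h(\rho_{\cX}(x))|$, and the Zariski--Riemann description of Proposition \ref{prop-ZR} is used to show that the system of retractions separates the points of $X^{\an}$, so that $\rho_{\cX'}(x)\to x$ as the models refine; the collapse above then forces $x=\rho_{\cX}(x)$. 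This convergence of the retractions to the identity is the main obstacle. Granting \eqref{item:compat}, part \eqref{item:semico} is then formal: each $g_{\cX}$ is continuous on $\widehat{\cX}_\eta$ (Proposition \ref{prop-monweight0}, or Proposition \ref{prop-cont}, together with continuity of $\rho_{\cX}$), so $\{g_{\cX}>c\}$ is open in $\widehat{\cX}_\eta$; given $x$ with $\weight_{\omega}(x)>c$, choose $\cX$ with $g_{\cX}(x)>c$ and refine it by Proposition \ref{prop-assu} so that $\widehat{\cX}_\eta$ is a neighbourhood of $x$ (monotonicity preserves $g_{\cX}(x)>c$), to see that $\{\weight_{\omega}>c\}$ is open.

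Finally, \eqref{item:birat} follows from the compatibility of skeleta and retractions with open immersions: for an $sncd$-model $\cX$ of $X$ the preimage of $Y$ is an $sncd$-model of $Y$ with skeleton $\red_{\cX}^{-1}(Y_k)\cap\Sk(\cX)$ and the same monomial weights, so the two defining suprema range over matching data. For \eqref{item:pluri}, the identity $\weight_{\cX,\omega^{\otimes d}}=d\cdot\weight_{\cX,\omega}$ on each $\Sk(\cX)$ scales the supremum termwise and gives $\weight_{\omega^{\otimes d}}(x)=d\cdot\weight_{\omega}(x)$. For the twist by a regular $f\neq 0$, the identity $\weight_{\cX,f\omega}=\weight_{\cX,\omega}+v_{\cdot}(f)$ on $\Sk(\cX)$ gives $\weight_{\cX,f\omega}(\rho_{\cX}(x))=g_{\cX}(x)+v_{\rho_{\cX}(x)}(f)$; Proposition \ref{prop-monmin} shows that both $g_{\cX}(x)$ and $v_{\rho_{\cX}(x)}(f)$ are nondecreasing under domination, so over the directed system of models the supremum of the sum splits as $\weight_{f\omega}(x)=\weight_{\omega}(x)+\sup_{\cX}v_{\rho_{\cX}(x)}(f)$. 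The proof is then completed by the identity $\sup_{\cX}v_{\rho_{\cX}(x)}(f)=v_x(f)$, which rests on the same convergence $\rho_{\cX}(x)\to x$ (now combined with continuity of $v_{\cdot}(f)$ from Proposition \ref{prop-cont}) that underlies \eqref{item:supeq}, and which I therefore expect to be the crux of the whole proposition.
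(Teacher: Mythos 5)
Your overall architecture --- writing $\weight_{\omega}(x)=\sup_{\cX}g_{\cX}(x)$ with $g_{\cX}(x)=\weight_{\cX,\omega}(\rho_{\cX}(x))$, extracting monotonicity of $g_{\cX}$ under domination from Proposition \ref{prop-monweight}\eqref{item:incr2}, and deducing \eqref{item:semico} and \eqref{item:compat} from it --- is exactly the paper's, and your treatment of \eqref{item:semico}, \eqref{item:compat} and (modulo some compression) \eqref{item:birat} is sound. The problem is \eqref{item:supeq}. You correctly reduce the strict inequality to: if $x\notin\Sk(\cX)$, then $\rho_{\cX'}(x)\neq\rho_{\cX}(x)$ for some $\cX'$ dominating $\cX$; and you settle the monomial case with Proposition \ref{prop-assu}. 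But for general $x$ you invoke a ``convergence of the retractions to the identity,'' which you yourself flag as the main obstacle and never prove. That is a genuine gap, and it sits exactly where the paper does its real work --- notably, the paper proves \emph{no} convergence statement at all. Its argument is an explicit construction: by injectivity of $j$ in Proposition \ref{prop-ZR}, choose a proper morphism of $sncd$-models $\cY\to\cX$ with $\red_{\cY}(x)\neq\red_{\cY}(\rho_{\cX}(x))$; if $y:=\rho_{\cY}(x)$ is not in $\Sk(\cX)$ one is done; otherwise $y=\rho_{\cX}(x)$, and since $\red_{\cY}(x)$ is a proper specialization of $\red_{\cY}(y)$, the point $\red_{\cY}(y)$ cannot specialize to $\red_{\cY}(x)$. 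Blowing up $\cY$ along the closure of $\{\red_{\cY}(x)\}$ and resolving gives a proper $\cX'\to\cY$ in which the inverse image of that closure is a union of components of $\cX'_k$; consequently $\red_{\cY}(\rho_{\cX'}(x))$ lies in the closure of $\{\red_{\cY}(x)\}$, hence differs from $\red_{\cY}(y)$, so $\rho_{\cX'}(x)\neq y$ and (using $\rho_{\cY}\circ\rho_{\cX'}=\rho_{\cY}$ from Proposition \ref{prop-domin}) $\rho_{\cX'}(x)\notin\Sk(\cY)\supset\Sk(\cX)$. This single blow-up trick is the missing idea in your proposal.

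For what it is worth, your separation statement can be proved with what Proposition \ref{prop-ZR} already provides, though not in the vague form ``$\rho_{\cX'}(x)\to x$'': if $\rho_{\cX'}(x)=y$ for all $\cX'$ dominating $\cX$, then by definition of the retractions $\red_{\cX'}(x)\in\overline{\{\red_{\cX'}(y)\}}$ for a cofinal family of models, hence (pushing forward along morphisms of models) for all proper models, hence $j(x)\in\overline{\{j(y)\}}$ in $X^{\mathrm{ZR}}$; continuity of the retraction $r$ of Proposition \ref{prop-ZR} and the Hausdorff property of $X^{\an}$ then force $x\in\overline{\{y\}}=\{y\}$. Had you written this out, your route would be a legitimate alternative to the paper's. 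Note also that your proof of \eqref{item:pluri} inherits the gap in a stronger form: the formula for $\weight_{f\omega}$ needs $\sup_{\cX}v_{\rho_{\cX}(x)}(f)=v_x(f)$, which is a quantitative statement that does \emph{not} follow from the separation property above; it needs its own argument, e.g.\ via Corollary \ref{cor-ZR}: when $i(x)$ is not in the zero locus of $f$, pass to models on which the closure of $\red_{\cX}(x)$ avoids the closure of that locus, so that $\mathrm{div}_{\cX}(f)$ is vertical near $\red_{\cX}(x)$ and $v_{\rho_{\cX}(x)}(f)=v_x(f)$ exactly, treating the case $v_x(f)=+\infty$ separately. (The paper dismisses \eqref{item:pluri} as obvious, so your identification of this issue is to your credit, but the claim you reduce it to is still left unproven.)
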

\begin{proof}
\eqref{item:semico} Let $x$ be a point of $X^{\an}$. By
Proposition \ref{prop-assu}, we can find an $sncd$-model $\cX$ of
$X$ such that $\widehat{\cX}_\eta$ is a neighbourhood of $x$. For
every point $y$ of $\widehat{\cX}_\eta$, we have
$$\weight_{\omega}(x)=\sup_{\cX'}\{\weight_{\omega}(\rho_{\cX'}(x))\}\in
\R\cup\{+\infty\}$$ where the supremum is taken over all
 proper $R$-morphisms $\cX'\to \cX$ of
$sncd$-models of $X$. Thus $\weight_{\omega}$ is lower
semi-continuous on $\widehat{X}_\eta$, since it is the supremum of
 the continuous functions $\weight_{\omega}\circ \rho_{\cX'}$.

\eqref{item:compat} This follows from Proposition
\ref{prop-monweight}.

\eqref{item:supeq} The inequality follows immediately from the
definition. Proposition \ref{prop-monweight} implies that every
 monomial point $y$  of $\widehat{\cX}_\eta$ that does not lie on $\Sk(\cX)$
 satisfies
$$\weight_{\omega}(y)> \weight_{\omega}(\rho_{\cX}(y)).$$ Thus
it suffices to prove the following property: if $x$ does not lie
on $\Sk(\cX)$, then we can find a proper morphism $\cX'\to \cX$ of
$sncd$-models of $X$ such that $x':=\rho_{\cX'}(x)$ does not lie
on $\Sk(\cX)$. Note that $\rho_{\cX}(x')=\rho_{\cX}(x)$ by
Proposition \ref{prop-domin}.

 We choose a proper morphism of $sncd$-models $\cY\to \cX$
 such that
$$\red_{\cY}(x)\neq \red_{\cY}(\rho_{\cX}(x)).$$ The existence of such a model follows from Proposition \ref{prop-ZR}.
    We set $y=\rho_{\cY}(x)$.  If $y$ does not belong to $\Sk(\cX)$, then we can take $\cX'=\cY$ and $x'=y$. Thus we only need to consider the case where $y\in \Sk(\cX)$.
 Then we
  have $y=\rho_{\cX}(x)$
  so that $$\red_{\cY}(x)\neq \red_{\cY}(y)$$
and $\red_{\cY}(x)$ belongs to the closure of $\{\red_{\cY}(y)\}$.
It follows that $\red_{\cY}(y)$ cannot belong to the closure of
$\{\red_{\cY}(x)\}$. Blowing up $\cY$ at the closure of
$\{\red_{\cY}(x)\}$ and resolving the singularities, we obtain a
proper morphism of $sncd$-models $\cX'\to \cY$. We set
$x'=\rho_{\cX'}(x)$. Note that $\rho_{\cY}(x')=y$ by Proposition
\ref{prop-domin}. The inverse image in $\cX'_k$ of the closure of
$\{\red_{\cY}(x)\}$ is a union of irreducible components of
$\cX'_k$. This implies that $\red_{\cY}(x')$ lies in the closure
of $\{\red_{\cY}(x)\}$. In particular, $\red_{\cY}(x')$ is
different from $\red_{\cY}(y)$, which means that $x'$ cannot lie
in $\Sk(\cY)$. Therefore, it doesn't lie in $\Sk(\cX)$ either,
since $\Sk(\cX)$ is contained in $\Sk(\cY)$ by Proposition
\ref{prop-domin}.

\eqref{item:birat}   Let $\cY$ be an $sncd$-model of $Y$ such that
$y\in \widehat{\cY}_\eta$.
  Gluing $\cY_K$ to $X$ by means of the open immersion $h$, we get
  an $sncd$-model $\cX$ of $X$ such that $h(y)\in
  \widehat{\cX}_\eta$.
 Then it is clear that
$$\weight_{h^*\omega}(\rho_{\cY}(y))=\weight_{\omega}(\rho_{\cX}(h(y))\leq \weight_{\omega}(h(y)).$$
  Since this holds for all $sncd$-models $\cY$ of $Y$ with $y\in \widehat{\cY}_\eta$, we find
that $$\weight_{h^*\omega}(y)\leq \weight_{\omega}(x).$$
 Conversely, let $\cX$ be an $sncd$-model of $X$ such that $h(y)$
 lies in $\widehat{\cX}_\eta$. Then by applying Proposition \ref{prop-assu} to $Y$ and a compactification of $\cX$ to a proper $R$-model of $\overline{X}$,
 we can find a proper morphism of $sncd$-models $\cX'\to \cX$ and
 an open subscheme $\cY$ of $\cX'$
  such that $\cY$ is an $sncd$-model of $Y$ and $y\in \widehat{\cY}_\eta$.
 We have
 $$\weight_{\omega}(\rho_{\cX}(h(y))\leq \weight_{\omega}(\rho_{\cX'}(h(y))=\weight_{h^*\omega}(\rho_{\cY}(y))\leq \weight_{h^*\omega}(y)$$
and thus
$$\weight_{h^*\omega}(y)= \weight_{\omega}(h(y)).$$

\eqref{item:pluri} This is obvious.
\end{proof}

\begin{remark}
The weight function $$\weight_{\omega}\colon X^{\an}\to \R\cup
\{+\infty\}$$ is not continuous. Assume, for instance, that $X$ is
a curve and that $\cX$ is a regular $R$-model of $X$. Let $E$ be
an irreducible component of $\cX_k$ of multiplicity $N$, and let
$(y_n)_{n\geq 0}$ be any sequence of distinct closed points on $E$
that are not contained in any other irreducible component of
$\cX_k$. If we denote by $x_n$ the divisorial point of $X^{\an}$
associated to the exceptional divisor of the blow-up of $\cX$ at
$y_n$, then the sequence $(x_n)_{n\geq 0}$ converges to the
divisorial point $x\in X^{\an}$ associated to $(\cX,E)$. However,
 a direct computation shows that $$\weight_{\omega}(x_n)\geq
 \weight_{\omega}(x)+1/N$$ for every $n\geq 0$.
\end{remark}

\subsection{The Kontsevich-Soibelman skeleton}\label{subsec-KS}
\sss \label{sss-KSsk} Let $X$ be a connected smooth separated
$K$-scheme of dimension $n$ and let $\omega$ be a non-zero
rational $m$-pluricanonical form on $X$, for some $m>0$. We define
the weight of $X$ with respect to $\omega$ to be the value
$$\weight_{\omega}(X)=\inf\{\weight_{\omega}(x)\,|\,x\in
\mathrm{Div}(X)\}\in \R\cup\{-\infty\}.$$ We say that a divisorial
point $x$ of $X^{\an}$ is $\omega$-essential if
$$\weight_{\omega}(x)=\weight_{\omega}(X).$$
 The closure in $\mathrm{Bir}(X)$ (with the induced topology from $X^{\an}$) of the set of $\omega$-essential points is called the Kontsevich-Soibelman
skeleton of $(X,\omega)$ and denoted by $\Sk(X,\omega)$. We endow
it with the induced topology from $X^{\an}$.

\begin{prop}\label{prop-birat}
The skeleton $(X,\omega)$ is a birational invariant: if $h\colon Y\to X$
is a birational morphism of connected smooth $K$-schemes, then the
morphism $h^{\an}\colon Y^{\an}\to X^{\an}$ induces a homeomorphism
 between $\Sk(Y,h^*\omega)$ and $\Sk(X,\omega)$.
\end{prop}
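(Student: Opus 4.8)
The plan is to factor the statement into three independent facts: that $h^{\an}$ restricts to a homeomorphism $\mathrm{Bir}(Y)\to \mathrm{Bir}(X)$ carrying $\mathrm{Div}(Y)$ bijectively onto $\mathrm{Div}(X)$; that the weight functions agree under this bijection, i.e. $\weight_{h^*\omega}(y)=\weight_{\omega}(h(y))$ for every $y\in \mathrm{Div}(Y)$; and that these two facts force the defining sets of $\Sk(Y,h^*\omega)$ and $\Sk(X,\omega)$ to correspond, so that their closures in the (homeomorphic) spaces of birational points correspond as well.

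First I would treat the topological bijection. Since $h$ is birational it identifies the function fields $K(Y)=K(X)$ compatibly with $v_K$, and the induced bijection on birational points is simply the identity on the common set of real valuations of $K(X)$ extending $v_K$. On $\mathrm{Bir}$ the subspace topology coincides on both sides with the topology of pointwise convergence of the functions $v\mapsto v(f)$, $f\in K(X)^{\times}$, because the local ring at a birational point is $K(X)$; hence the bijection and its inverse are continuous, and $h^{\an}$ restricts to a homeomorphism $\mathrm{Bir}(Y)\xrightarrow{\sim}\mathrm{Bir}(X)$. That this homeomorphism carries $\mathrm{Div}(Y)$ onto $\mathrm{Div}(X)$ is the assertion that being divisorial depends only on the valued function field $(K(X),v_K)$: a birational point is divisorial precisely when its valuation is a divisorial valuation of $K(X)$ over $(R,v_K)$, which is intrinsic. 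For the realization in one direction one can compactify $X$ to a normal proper $\overline X$ (divisorial points of $X$ and $\overline X$ agree, as in the proof of Proposition \ref{prop-dense}), find a center of the valuation on a normal proper $R$-model by the valuative criterion, and realize it as a prime component of the special fibre after finitely many blow-ups; the other direction is symmetric. Note that, by normality, every divisorial point already admits a \emph{regular} model with irreducible special fibre realizing it, obtained by excising the (codimension $\geq 2$) singular locus and the superfluous components as in \eqref{sss-divpointexpl}, so no resolution of singularities is needed here.

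The heart of the matter is the equality of weights. Fix $y\in \mathrm{Div}(Y)$ and put $x=h(y)$, with $v:=v_x=v_y$ the common divisorial valuation of $K(X)=K(Y)$ and $\mathcal{O}_v$ its valuation ring. By the previous paragraph choose regular $R$-models $\cY_0$ of $Y$ and $\cX_0$ of $X$, with irreducible special fibres, together with components $E'\subset \cY_{0,k}$ and $E\subset \cX_{0,k}$ such that $v_{E'}=v=v_E$; thus $\mathcal{O}_{\cY_0,\eta_{E'}}=\mathcal{O}_v=\mathcal{O}_{\cX_0,\eta_E}$ as subrings of $K(X)$. Two integral $R$-schemes of finite type with a common function field and equal local rings at two points are isomorphic near those points, so the rational map from $\cY_0$ to $\cX_0$ induced by $h$ is an isomorphism $\phi$ between open neighbourhoods of $\eta_{E'}$ and $\eta_E$, carrying $E'$ to $E$. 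Under $\phi$ the relative pluricanonical sheaves correspond and the rational section $h^*\omega$ of $\omega_{\cY_0/R}^{\otimes m}$ is carried to $\omega$, since the pullback of a rational pluricanonical form along a birational map is the identity under $K(Y)=K(X)$. Hence $\mathrm{ord}_{E'}(\mathrm{div}_{\cY_0}(h^*\omega))=\mathrm{ord}_E(\mathrm{div}_{\cX_0}(\omega))$ and $\mathrm{mult}_{E'}(\cY_{0,k})=\mathrm{mult}_E(\cX_{0,k})$, and comparing the definitions in \eqref{sss-wdiv} and \eqref{sss-divweight} gives $\weight_{h^*\omega}(y)=\weight_{\omega}(x)$.

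With these two facts the conclusion is formal. Taking infima over divisorial points, the equality of weights yields $\weight_{h^*\omega}(Y)=\weight_{\omega}(X)$, so $y$ is $h^*\omega$-essential if and only if $h(y)$ is $\omega$-essential. The homeomorphism $\mathrm{Bir}(Y)\xrightarrow{\sim}\mathrm{Bir}(X)$ therefore maps the set of $h^*\omega$-essential points bijectively onto the set of $\omega$-essential points, and since a homeomorphism commutes with closure it maps $\Sk(Y,h^*\omega)$ onto $\Sk(X,\omega)$; both carry the subspace topology induced from the analytifications, so this restriction is itself a homeomorphism. The main obstacle, and the only place where genuine care is needed, is the weight comparison at points lying over the exceptional locus of $h$: the key point is that one never needs a global morphism of models resolving $h$, because the two regular models realizing $y$ and $h(y)$ are automatically locally isomorphic at the prime divisor cutting out the common valuation $v$, which is exactly where the weight is computed. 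This keeps the argument valid with no hypothesis on the residue characteristic.
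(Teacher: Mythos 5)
Your overall strategy is sound and genuinely different from the paper's, but one step of it is a substantial unproved assertion. The paper's proof is a two-line reduction: since a birational morphism of integral $K$-schemes of finite type is an isomorphism over a dense open subset $U\subseteq X$, it suffices to treat open immersions, and for an open immersion $Y\subseteq X$ one has $\mathrm{Bir}(Y)=\mathrm{Bir}(X)$ and $\mathrm{Div}(Y)=\mathrm{Div}(X)$ with equal weight functions, because a model of $Y$ becomes a model of $X$ by gluing $X$ along the generic fiber, and a model of $X$ becomes a model of $Y$ by deleting the closure of $X\smallsetminus Y$ (which, for dimension reasons, contains no component of the special fiber). Your other two steps are correct and closely parallel arguments already in the paper: the identification of $\mathrm{Bir}(Y)$ with $\mathrm{Bir}(X)$ as topological spaces is fine, and your weight comparison is exactly the spreading-out argument the paper uses to prove that $\weight_{\omega}$ is well defined (the proposition following \eqref{sss-wdiv}): equal local rings at the two codimension-one points force isomorphic neighbourhoods, under which $h^*\omega$ and $\omega$ agree as rational sections, hence the pairs $(\mu,N)$ coincide.

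The problematic step is the claim $h^{\an}(\mathrm{Div}(Y))=\mathrm{Div}(X)$. Declaring that ``being divisorial is intrinsic to $(K(X),v_K)$'' carries no content until a realization statement is proved, and the realization you sketch --- take the center of $v$ on a proper $R$-model of a compactification of $X$ and ``realize it as a prime component of the special fibre after finitely many blow-ups'' --- is Zariski's theorem that a divisorial valuation is reached after finitely many blow-ups of its successive centers (Lemma 2.45 in Koll\'ar--Mori, stated there for varieties over a field). That statement is true, but it is not elementary; moreover, to invoke it you must first know that $v$ is divisorial \emph{over} $\overline{\cX}$, whereas your witness divisor lives on a model of $Y$ that admits no morphism to $\overline{\cX}$, so you would additionally need the Zariski--Abhyankar numerical characterization of divisorial valuations (or a graph construction) to transfer divisoriality --- none of which appears in, or is needed by, the paper, and all of which would require care in the relative setting over $R$. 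Note also that your weight comparison presupposes this realization, so the gap propagates to the ``heart of the matter''. The cheap fix is the paper's: factor $h$ through $V=h^{-1}(U)\xrightarrow{\;\sim\;}U\subseteq X$ and use the glue/excise constructions of \eqref{sss-divpointexpl} and Proposition \ref{prop-dense}; then $\mathrm{Div}(Y)=\mathrm{Div}(X)$ is immediate and the rest of your argument goes through verbatim, with no hypothesis on the residue characteristic.
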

\begin{proof}
It is enough to consider the case where $h$ is an open immersion.
This case is trivial, since $Y^{\an}$ and $X^{\an}$ have the same
divisorial and birational points and we can extend every model
$\cY$ for $Y$ to a model for $X$ by gluing $X$ to the generic
fiber $\cY_K$.
\end{proof}

\sss \label{sss-compar} The skeleton $\Sk(X,\omega)$ was
introduced by Kontsevich and Soibelman in \cite[\S6.6]{KS} in the
case where $R=\C\llbracket t\rrbracket$ and $X$ and $\omega$ are defined over the
ring $\C\{t\}$ of germs of analytic functions on $\C$ at $0$.
Instead of our definition of the weight, they used
 the following invariant:
 with the notations of \eqref{sss-wdiv}, they
 considered the value
 $$\frac{1}{N}\cdot \mathrm{ord}_{E}(\omega \wedge dt/t)$$ where $\omega \wedge
 dt$ is viewed as a meromorphic differential form of maximal
 degree on $\cX$. However, this invariant does not lead to the
 correct definition of the skeleton, and Theorem 3 in
 \cite[\S6.6]{KS} is incorrect as stated. Instead, one should use the
 invariant
$$\frac{1}{N}\cdot (\mathrm{ord}_{E}(\omega \wedge dt/t)+1)$$
 which coincides with our definition of weight up to shift by $-1$ because the wedge
 product with $dt$ defines an isomorphism between $\omega_{\cX/R}$ and
 $\Omega^{n+1}_{\cX/\C}$. We will now prove a generalization of Theorem 3 in
 \cite[\S6.6]{KS}. Instead of invoking the Weak Factorization
 Theorem as in \cite[\S6.6]{KS}, we only use the elementary properties of the weight function
 that we have proven in the preceding subsections.

\sss \label{sss-KS} Assume that $X$ is proper over $K$ and that
$\omega$ is regular at every point of $X$. We suppose furthermore
that $X$ has a proper $sncd$-model $\cX$. We
 will give an explicit description of $\Sk(X,\omega)$ in terms of $\cX$.
 We write $$\cX_k=\sum_{i\in I}N_i E_i$$ and we denote by
 $\mu_i-m$ the multiplicity of $E_i$ in
 $\mathrm{div}_{\cX}(\omega)$, for each $i$ in $I$.
 Let $J$ be a
non-empty subset of $I$ and let $\xi$ be a generic point of
$\cap_{j\in J} E_j$. We say that $\xi$ is $\omega$-essential if
$$\frac{\mu_j}{N_j}=\min\{\frac{\mu_i}{N_i}\,|\,i\in I\}$$ for
every $j$ in $J$ and $\xi$ is not contained in the closure of the
zero locus of $\omega$ on $X$.

\begin{theorem}\label{thm-KS} With the assumptions and notations of \eqref{sss-KS}, we have  $$\weight_{\omega}(X)=\min \{\weight_{\omega}(x)\,|\,x\in
\mathrm{Div}(X)\}=\min \{\weight_{\omega}(x)\,|\,x\in
X^{\an}\}=\min\{\frac{\mu_i}{N_i}\,|\,i\in I\}.$$  The
Kontsevich-Soibelman skeleton $\Sk(X,\omega)$ is equal to the set
of points $x\in X^{\an}$ where the weight function
$\weight_{\omega}$ reaches its minimal value
$\weight_{\omega}(X)$. It is the union of the open faces in the
Berkovich skeleton $\Sk(\cX)$ corresponding to the
 $\omega$-essential points of $\cX_k$. This is a non-empty compact
 subspace of $\Sk(\cX)$.

 In particular, this union of faces in the Berkovich skeleton $\Sk(\cX)$ only depends on $X$ and
 $\omega$, and not on the choice of the proper $sncd$-model $\cX$.
\end{theorem}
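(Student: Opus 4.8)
The plan is to reduce everything to the restriction of the weight function to the compact skeleton $\Sk(\cX)$ and then analyze that restriction face by face. Since $\omega$ is regular it has no poles, so all hypotheses about the locus of poles in Propositions \ref{prop-monweight} and \ref{prop-weightext} hold automatically. The conceptual lever is the strict-descent property \ref{prop-weightext}\eqref{item:supeq}: for every $x\in X^{\an}$ one has $\weight_\omega(x)\geq\weight_\omega(\rho_\cX(x))$ with equality if and only if $x\in\Sk(\cX)$. As $\cX$ is proper, $X^{\an}=\widehat{\cX}_\eta$, so $\rho_\cX$ is defined on all of $X^{\an}$ and retracts it onto $\Sk(\cX)$; hence the infima of $\weight_\omega$ over $X^{\an}$, over $\mathrm{Div}(X)$, and over $\Sk(\cX)$ all coincide, and the minimum locus $M$ of $\weight_\omega$ on $X^{\an}$ is contained in $\Sk(\cX)$. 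Together with compactness of $\Sk(\cX)$ (Proposition \ref{prop-Bskhomeo}) and continuity of $\weight_{\cX,\omega}=\weight_\omega|_{\Sk(\cX)}$ (Proposition \ref{prop-monweight0}), the common value is attained, so each infimum is a minimum.

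Next I would compute $\weight_{\cX,\omega}$ on a closed face $\sigma$ corresponding to a generic point $\xi$ of $\cap_{j\in J}E_j$. Let $H_\xi$ denote the part of the (effective, since $\omega$ is regular) horizontal divisor of $\mathrm{div}_\cX(\omega)$ passing through $\xi$. For $x\in\sigma^o$ with $\alpha_j=w_{E_j}/N_j$ the defining identity $\weight_{\cX,\omega}(x)=v_x(\mathrm{div}_\cX(\omega)+m(\cX_k)_{\red})$ unwinds, using $\mathrm{div}_\cX(\omega)+m(\cX_k)_{\red}=\sum_{j\in J}\mu_j E_j+H_\xi$ near $\xi$ and $v_x(E_j)=\alpha_j$, into
$$\weight_{\cX,\omega}(x)=\sum_{j\in J}\mu_j\alpha_j+v_x(H_\xi).$$
Set $m_0=\min_{i\in I}(\mu_i/N_i)$. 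Both terms are bounded below: $\sum_{j}\mu_j\alpha_j=\sum_j(\mu_j/N_j)w_{E_j}\geq m_0$ as a convex combination, and $v_x(H_\xi)\geq 0$. At the vertex $x_i$ (where $\alpha_i=1/N_i$) the horizontal term vanishes, since the horizontal prime divisors of $H_\xi$ never equal the vertical divisor $E_i$, so $\weight_{\cX,\omega}(x_i)=\mu_i/N_i$. Thus $\min_{\Sk(\cX)}\weight_{\cX,\omega}=m_0$, and since the retraction never increases the weight, the three minima of the first paragraph all equal $\min_{i\in I}(\mu_i/N_i)$, proving the displayed chain of equalities.

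For the shape of the skeleton I would read off the two terms on the open face, where all $\alpha_j>0$. An admissible expansion of a local equation of $H_\xi$ has nonzero constant term exactly when $\xi\notin\overline{Z(\omega)}$; when $\xi\in\overline{Z(\omega)}$ the constant term vanishes and every monomial satisfies $\alpha\cdot\beta>0$, so $v_x(H_\xi)>0$ throughout $\sigma^o$. Likewise $\sum_j\mu_j\alpha_j=m_0$ on $\sigma^o$ if and only if $\mu_j/N_j=m_0$ for all $j\in J$, and is $>m_0$ otherwise. Hence $\weight_{\cX,\omega}\equiv m_0$ on $\sigma^o$ when $\xi$ is $\omega$-essential and $\weight_{\cX,\omega}>m_0$ everywhere on $\sigma^o$ when it is not. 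Since $M\subseteq\Sk(\cX)$ meets each open face either fully or not at all, we get $M=\bigcup_{\xi\ \omega\text{-essential}}\sigma_\xi^o$, which is precisely the set of points where $\weight_\omega$ attains its minimum.

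Finally I would match $M$ with $\Sk(X,\omega)$ and verify the topology. Lower semi-continuity (Proposition \ref{prop-weightext}\eqref{item:semico}) shows $M=\{\weight_\omega\leq m_0\}$ is closed; every face of an $\omega$-essential face is again $\omega$-essential (shrinking the index set preserves the minimal ratios, and passing to a more generic point keeps it off the closed set $\overline{Z(\omega)}$), so $M$ is a finite union of closed faces, hence compact, and non-empty since a vertex realizing $m_0$ is essential. The divisorial points of $\sigma^o$ are exactly those with rational barycentric coordinates (Proposition \ref{prop-rank1}) and are dense in $\sigma^o$; as the $\omega$-essential divisorial points are $\mathrm{Div}(X)\cap M$, their closure is $M$, giving $\Sk(X,\omega)=M$, and independence of $\cX$ is automatic because $\Sk(X,\omega)$ is defined intrinsically. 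I expect the main obstacle to be the face analysis of the third paragraph: the clean splitting into the affine term $\sum_j\mu_j\alpha_j$ and the nonnegative correction $v_x(H_\xi)$ governed by the zero locus is what simultaneously yields equality on essential faces and strict inequality on all others (including faces centered in $\overline{Z(\omega)}$), and hence the exact description of $\Sk(X,\omega)$.
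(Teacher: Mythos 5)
Your proposal is correct and takes essentially the same approach as the paper: the paper's (very terse) proof combines Proposition \ref{prop-monweight} with precisely your face-wise inequality, namely $\weight_{\cX,\omega}(x)\geq\sum_{i\in I}\mu_i v_x(E_i)$ for $x\in\Sk(\cX)$ with equality if and only if $\red_{\cX}(x)$ avoids the closure of the zero locus of $\omega$, which is your splitting into the affine term $\sum_j\mu_j\alpha_j$ plus the nonnegative horizontal correction $v_x(H_\xi)$. Your write-up simply makes explicit the supporting steps the paper leaves implicit: the reduction of all three infima to $\Sk(\cX)$ via the strict-descent property of Proposition \ref{prop-weightext}, the fact that faces of $\omega$-essential faces are $\omega$-essential (hence the union of open essential faces is closed and compact), and the density of rational (divisorial) points in the essential open faces.
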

\begin{proof}
This follows immediately from Proposition \ref{prop-monweight} and
the fact that
$$\weight_{\cX,\omega}(x)\geq \sum_{i\in I}\mu_i v_x(E_i)$$ for
every $x$ in $\Sk(\cX)$, with equality if and only if
$\red_{\cX}(x)$ is not contained in the closure of the zero locus
of $\omega$ on $X$.
\end{proof}

\begin{example}
 Let $X$ be a smooth proper $K$-variety with trivial canonical
 sheaf, and assume that $X$ has a proper $sncd$-model $\cX$ such
 that $\omega_{\cX/R}$ is trivial. Then $\Sk(X)$ is the union of the
 closed faces of $\Sk(\cX)$ corresponding to the irreducible
 components of $\cX_k$ of maximal multiplicity. In particular, if
 $\cX_k$ is reduced, then $\Sk(X)=\Sk(\cX)$. Such models play an
 important role in the study of degenerations of complex
 $K3$-surfaces; see \cite{kulikov} and \cite{pers-pink}.
\end{example}

\sss  Note that $\Sk(X,\omega)$ may be empty if $\omega$ has poles
on $X$. For instance, it is easy to see that the weight of
$\Proj^1_K=\mathrm{Proj}\,K[x,y]$ with respect to the one-form
$d(x/y)$ is equal to $-\infty$.

\subsection{The essential skeleton}\label{ss-essential}
\sss Let $X$ be a connected smooth and proper $K$-variety. If
$\cX$ is an $sncd$-model of $X$, then the Berkovich skeleton
$\Sk(\cX)$ is of course highly dependent on the choice of $\cX$.
Nevertheless, we have seen in Theorem \ref{thm-KS} that for every
non-zero $m$-pluricanonical form $\omega$ on $X$, the
Kontsevich-Soibelman skeleton $\Sk(X,\omega)$ identifies certain
open faces of $\Sk(\cX)$ that must be contained in the Berkovich
skeleton of {\em every} $sncd$-model of $X$. This naturally leads
to the following definition.

\begin{definition}
 Let $X$ be a connected smooth and proper $K$-variety. We define
 the essential skeleton of $X$ by
 $$\Sk(X)=\bigcup_{\omega}\Sk(X,\omega)\quad \subset X^{\an}$$
 where $\omega$ runs through the set of non-zero regular
 pluricanonical forms on $X$.
\end{definition}

\begin{prop}
The essential skeleton $\Sk(X)$ is a birational invariant of $X$.
\end{prop}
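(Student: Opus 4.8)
The plan is to interpret the statement as follows: if $f\colon X\dashrightarrow X'$ is a birational map between connected smooth and proper $K$-varieties, then $f$ induces a homeomorphism $\Sk(X)\to \Sk(X')$ compatible with the canonical identification of birational points. First I would reduce to the case of a birational \emph{morphism}. Resolving the birational map $f$ (which is possible in the setting of \eqref{sss-assu}, where we also have proper $sncd$-models at our disposal), one obtains a connected smooth proper $K$-variety $W$ together with birational morphisms $p\colon W\to X$ and $q\colon W\to X'$ with $q=f\circ p$. Since the assertion is symmetric in $X$ and $X'$, it then suffices to prove that a birational morphism $h\colon W\to X$ of connected smooth proper $K$-varieties induces, through $h^{\an}$, a homeomorphism $\Sk(W)\to \Sk(X)$.

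The key input is the birational invariance of regular pluricanonical forms. For every $m>0$ I would show that pullback along $h$ gives a bijection between the sets of non-zero regular $m$-pluricanonical forms on $X$ and on $W$. Concretely, $h$ is a birational morphism of smooth proper varieties, so the relative canonical divisor $K_{W/X}$ is effective and $h$-exceptional; since $h_*\mathcal{O}_W=\mathcal{O}_X$ (as $X$ is normal and $h$ proper birational) one has $h_*\mathcal{O}_W(mK_{W/X})=\mathcal{O}_X$. Combining the adjunction formula \eqref{eq-adj} with \eqref{sss-relcan} gives $\omega_{W/K}^{\otimes m}\cong h^*\omega_{X/K}^{\otimes m}\otimes \mathcal{O}_W(mK_{W/X})$, and the projection formula then yields $h_*\,\omega_{W/K}^{\otimes m}=\omega_{X/K}^{\otimes m}$. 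Hence $h^*\colon H^0(X,\omega_{X/K}^{\otimes m})\to H^0(W,\omega_{W/K}^{\otimes m})$ is an isomorphism, so every non-zero regular $m$-pluricanonical form $\eta$ on $W$ equals $h^*\omega$ for a unique such form $\omega$ on $X$.

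With this in hand, Proposition \ref{prop-birat} gives, for each non-zero regular pluricanonical form $\omega$ on $X$, a homeomorphism $h^{\an}\colon \Sk(W,h^*\omega)\to\Sk(X,\omega)$. Taking the union over all such $\omega$ and using the bijection of forms from the previous step, I obtain
$$h^{\an}(\Sk(W))=\bigcup_{\omega}h^{\an}(\Sk(W,h^*\omega))=\bigcup_{\omega}\Sk(X,\omega)=\Sk(X),$$
so that $h^{\an}$ restricts to a continuous bijection $\Sk(W)\to \Sk(X)$; it is injective because $h$ induces a bijection $\mathrm{Bir}(W)\to \mathrm{Bir}(X)$ and both skeleta consist of birational points. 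To upgrade this to a homeomorphism I would invoke Theorem \ref{thm-KS}: each $\Sk(X,\omega)$ is a union of faces of the Berkovich skeleton of a fixed proper $sncd$-model of $X$, so only finitely many distinct sets $\Sk(X,\omega)$ occur and $\Sk(X)$ is a finite union of compact sets, hence compact; likewise for $\Sk(W)$. A continuous bijection between compact Hausdorff spaces is a homeomorphism, which finishes the argument.

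The main obstacle I anticipate is the surjectivity half of the pluricanonical bijection, i.e.\ that a regular pluricanonical form on $W$ descends to one on $X$; this is precisely the identity $h_*\omega_{W/K}^{\otimes m}=\omega_{X/K}^{\otimes m}$, which rests on the effectivity and exceptionality of $K_{W/X}$. The remaining delicate point is the passage from ``homeomorphism on each $\Sk(W,h^*\omega)$'' to ``homeomorphism on the union,'' for which the compactness supplied by Theorem \ref{thm-KS} is essential.
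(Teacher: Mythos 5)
Your proof rests on exactly the two pillars that the paper's one-line proof invokes---birational invariance of $\Sk(X,\omega)$ (Proposition \ref{prop-birat}) and birational invariance of the spaces of regular pluricanonical forms---and within the hypotheses you impose it is correct; in particular your descent argument $h_*\omega_{W/K}^{\otimes m}=\omega_{X/K}^{\otimes m}$ is a sound proof of the classical fact that the paper simply cites. The difference is in the scaffolding, and yours costs generality that neither the statement nor the paper's argument requires. Resolving the indeterminacy of $f$ needs resolution of singularities over $K$; this is \emph{not} what \eqref{sss-assu} supplies (that assumption concerns $sncd$-models over $R$, not $K$-varieties), it holds in residue characteristic zero only via Hironaka, and it is unknown in positive residue characteristic in higher dimension, whereas the proposition is stated with no such hypothesis. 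Likewise, your compactness argument via Theorem \ref{thm-KS} requires a proper $sncd$-model of $X$ and of $W$, again an extra hypothesis. Both detours are avoidable: a birational map $f\colon X\dashrightarrow X'$ restricts to an isomorphism between dense open subschemes $U\subset X$ and $U'\subset X'$, and since every birational point of $X^{\an}$ (resp.\ of $(X')^{\an}$) maps to the generic point of $X$ (resp.\ $X'$), it lies in $U^{\an}$ (resp.\ $(U')^{\an}$); hence the identification $\mathrm{Bir}(X)=\mathrm{Bir}(U)=\mathrm{Bir}(U')=\mathrm{Bir}(X')$ is tautologically a homeomorphism, and the delicate point you worried about (upgrading a continuous bijection to a homeomorphism on the union) simply does not arise. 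Applying Proposition \ref{prop-birat} to the open immersions $U\to X$ and $U'\to X'$ gives $\Sk(X,\omega)=\Sk(U,\omega|_U)=\Sk(X',\omega')$ whenever $\omega$ and $\omega'$ agree on $U\cong U'$, and the correspondence $\omega\leftrightarrow\omega'$ between nonzero regular pluricanonical forms holds in any characteristic by the codimension-two extension argument (a birational map of smooth proper varieties is defined outside a closed subset of codimension at least two, and sections of a line bundle on a normal variety extend across such subsets). Taking the union over $\omega$ then yields $\Sk(X)=\Sk(X')$ unconditionally, which is the route the paper intends.
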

\begin{proof}
This follows at once from the birational invariance of
$\Sk(X,\omega)$ and of the spaces of pluricanonical forms on $X$.
\end{proof}

\sss If $X$ has Kodaira dimension $-\infty$, then $\Sk(X)$ is
empty. If $X$ has Kodaira dimension $\geq 0$ and there exists a
proper $sncd$-model $\cX$ of $X$, then $\Sk(X)$ is a non-empty
 union of closed faces of $\Sk(\cX)$, by Theorem \ref{thm-KS} (it is a union of open faces and it is
 compact). In particular, $\Sk(X)$ carries a canonical piecewise
 affine structure, by Corollary \ref{cor-pieceaff}. If the
 canonical sheaf $\omega_{X/K}$ of $X$ is trivial, then
 $\Sk(X)=\Sk(X,\omega)$ where $\omega$ is any non-zero
 differential form of maximal degree on $X$.

\section{The skeleton is connected}\label{sec-connected} In this section we show that
the Kontsevich-Soibelman skeleton of a smooth and proper
$K$-variety of geometric genus one is connected, assuming that $k$
has characteristic zero. The proof is based on a generalization of
Koll\'ar's torsion-free theorem to $R$-schemes. We will first
deduce this generalization by means of an approximation argument.
\subsection{Algebraic approximation of
$R$-schemes}\label{subsec-Greenberg} \sss If $X$ is a proper
scheme of pure dimension $d$ over a field $F$, then a line bundle
$\mathcal{L}$ on $X$ is called big if there exists a constant
$C>0$ such that $$\mathrm{dim}_FH^0(X,\mathcal{L}^n)\geq Cn^d$$
for all sufficiently divisible positive integers $n$.

\begin{prop}\label{prop-approx}
Assume that $R$ has equal characteristic. Let $\cX$ be a flat
$R$-scheme and let $\mathcal{L}$ be a line bundle on $\cX$. Then
for every integer $n>0$, we can find the following data:
\begin{enumerate}
\item \label{item:curve} a smooth connected algebraic $k$-curve
$C$, a $k$-rational point $O$ on $C$ and a ring isomorphism
$$R/\frak{m}^n\to \mathcal{O}_{C,O}/\frak{m}^n_{C,O}$$ where
$\frak{m}_{C,O}$ denotes the maximal ideal of $\mathcal{O}_{C,O}$;
 \item \label{item:models} a  flat $C$-scheme $\cX'$, a line
bundle $\mathcal{L}'$ on $\cX'$, an isomorphism of
$R/\frak{m}^n$-schemes
$$f\colon \cX'\times_C \Spec (\mathcal{O}_{C,O}/\frak{m}^n_{C,O})\to
\cX\times_R (R/\frak{m}^n)$$ and an isomorphism of line bundles
$$\mathcal{L}'/\frak{m}^n_{C,O}\to
f^*(\mathcal{L}/\frak{m}^n).$$ Here we wrote
$\mathcal{L}'/\frak{m}^n_{C,O}$ for the pullback of $\mathcal{L}'$
to $\cX'\times_C \Spec (\mathcal{O}_{C,O}/\frak{m}^n_{C,O})$, and
$\mathcal{L}/\frak{m}^n$ for the pullback of $\mathcal{L}$ to
$\cX\times_R (R/\frak{m}^n)$.
\end{enumerate}
These data satisfy the following properties.
\begin{enumerate}\setcounter{enumi}{2}
\item \label{item:proper} If $\cX$ is proper over $R$, then we can
arrange that $\cX'$ is proper over  $C$.

\item \label{item:ample} If $\cX$ is projective over $R$ and
$\mathcal{L}$ is ample, then $\mathcal{L}'$ is ample over some
neighbourhood of $O$ in $C$.

\item \label{item:genample} If
 the restriction of $\mathcal{L}$ to $\cX_K$ is ample, then we can arrange that $\mathcal{L}'$ is ample over the generic point of
 $C$.
 If $\cX_K$ is proper over $K$ and the restriction of $\mathcal{L}$ to $\cX_K$ is big,
 then we can arrange that the generic fiber of $\cX'\to C$ is proper and that $\mathcal{L}'$ is big over the generic point of
 $C$.

\item \label{item:regular} If $\cX$ is regular and $n\geq 2$, then
$\cX'$ is regular at every point of the fiber over $O$.
 If, in addition, $E$ is a divisor on $\cX$ whose support is contained in $\cX_k$ and
   has strict normal crossings, then $E$ viewed as a divisor on $\cX'$ via the isomorphism of $k$-schemes $\cX_k\cong \cX'\times_C O$
    also has strict
normal crossings. If $\cX$ is regular and proper over $R$, then we
can arrange that $\cX'$ is regular and proper over $C$.
\end{enumerate}
\end{prop}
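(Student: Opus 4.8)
The plan is to transport $(\cX,\mathcal{L})$ from $R$ to the henselisation of a line over $k$ by Greenberg's approximation theorem and then to spread it out over an algebraic curve. Since $R$ has equal characteristic, Cohen's structure theorem gives an isomorphism $R\cong k[[t]]$ with $t$ a uniformiser. Let $A$ be the henselisation of the local ring of $\A^1_k$ at the origin; it is an excellent henselian discrete valuation ring with $\widehat{A}=R$, and $\Spec A$ is the cofiltered limit of the pointed étale (hence smooth) affine $k$-curves $(C,O)\to(\A^1_k,0)$ with residue field $k$ at $O$. For each such $C$ one has $\widehat{\mathcal{O}}_{C,O}=R$, so that $\mathcal{O}_{C,O}/\frak{m}^n=R/\frak{m}^n$; this furnishes the ring isomorphism of \eqref{item:curve}. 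Every finitely presented object over $A$ descends, after shrinking, to one over such a $C$ by the standard limit formalism (see EGA IV, \S8), and $C$ may be taken connected by passing to the component of $O$. It therefore suffices to manufacture the approximating data over $A$, and I would reduce at the outset to the case where $\cX$ is of finite presentation over $R$.

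The engine is as follows. I would realise $(\cX,\mathcal{L})$ as an $R=\widehat{A}$-point of a moduli scheme $Y_0$ of finite presentation over $A$ whose points classify the kind of object wanted, so that pulling back the universal family along an $A$-point automatically produces a family with the desired structure. The point of working over $A$ rather than $R$ is that $A$-points, unlike $R$-points, spread out to honest curves, while Greenberg's theorem still lets us approximate a given $R$-point by an $A$-point: applied to the excellent henselian ring $A$ and the finite-type $A$-scheme $Y_0$, it says that the images of $Y_0(A)$ and $Y_0(R)$ in $Y_0(A/\frak{m}^n)$ coincide. Choosing an $A$-point $\psi$ whose reduction modulo $\frak{m}^n$ matches that of the classifying point of $\cX$ and pulling back the universal family yields $(\cX_A,\mathcal{L}_A)$ over $A$ agreeing with $(\cX,\mathcal{L})$ modulo $\frak{m}^n$, whence \eqref{item:curve} and \eqref{item:models}. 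When $\cX$ is projective and $\mathcal{L}$ is a polarisation one takes for $Y_0$ a Hilbert scheme: its points parametrise flat proper families and the tautological bundle is relatively ample, so flatness, properness \eqref{item:proper} and ampleness \eqref{item:ample} come for free and descend to $C$ after shrinking. For a general flat finitely presented $\cX$ I would instead write $R$ as a filtered colimit of smooth $A$-algebras by Néron--Popescu desingularisation, descend $\cX$, $\mathcal{L}$ and, when present, properness and ampleness to a finite level (again by EGA IV, \S8), and lift the classifying map to an $A$-point modulo $\frak{m}^n$ using smoothness and henselianity; base change preserves flatness, properness and ampleness.

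The regularity statement \eqref{item:regular} is verified by a comparison of local rings modulo $t^2$, which is where the hypothesis $n\geq2$ enters. For a point $x$ of $\cX'$ lying over $O$ and the corresponding point $x'$ of $\cX$, the congruence modulo $t^n$ gives an isomorphism $\mathcal{O}_{\cX',x}/t^2\cong\mathcal{O}_{\cX,x'}/t^2$; since $t\in\frak{m}_x$ we have $t^2\in\frak{m}_x^2$, so the two cotangent spaces have equal dimension, while flatness over the two discrete valuation rings together with the isomorphism of special fibres forces the local dimensions to agree. Hence $\cX$ regular implies $\mathcal{O}_{\cX',x}$ regular, and the strict normal crossings condition on $E$ is read off from the same comparison. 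If moreover $\cX$ is proper over $R$, the regular locus is open and, by properness, its complement has closed image in $C$ missing $O$, so after shrinking $\cX'$ is regular over all of $C$.

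The step I expect to be the main obstacle is the generic-fibre assertion \eqref{item:genample}, since the generic fibre of $\cX'\to C$ lives over the function field of $C$ and a priori bears no relation to $\cX_K$. The remedy is to make $\cX_K$ correspond to the \emph{generic} point of the parameter scheme: I would replace $Y_0$ by the integral closed subscheme $Y$ which is the closure of the image of the generic point of $\Spec R$, so that $\cX_K$ is the fibre over the generic point $\eta_Y$, and apply Greenberg to $Y$. The hypothesis that $\mathcal{L}|_{\cX_K}$ is ample (resp.\ big) then says that $\eta_Y$ lies in the locus in $Y$ where the universal bundle is ample (resp.\ big) on the fibre. In the ample case this locus is open, so sharpening the congruence to a sufficiently high order $n'\geq n$ (which still implies the required congruence modulo $\frak{m}^n$) forces $\psi$ to send the generic point of $\Spec A$ into it. In the big case one uses instead that the fibral volume is upper semicontinuous, so that the big locus is stable under specialisation and, containing $\eta_Y$, contains every point of $Y$; thus $\mathcal{L}'$ is big on the generic fibre, whose properness descends by the same limit argument. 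Keeping flatness and properness under control throughout---by routing everything through the moduli space---and pinning down which point of $Y$ the generic fibre over $C$ maps to are the two places demanding the most care.
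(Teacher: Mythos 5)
Your construction is correct in substance and follows the same overall architecture as the paper---approximate over the henselization of $\A^1_k$ at the origin, pass to an \'etale neighbourhood $(C,O)$, and then prove (3)--(6) by spreading out, by openness of the ample/proper locus combined with a congruence-sharpening to steer the generic point of $C$, and by a tangent-space comparison modulo $\mathfrak{m}^2$---but the engine is genuinely different. The paper fixes a coefficient field $k\subset R$, spreads $(\cX,\mathcal{L})$ out over a finite-type sub-$k$-algebra $A\subset R$, and applies Greenberg's theorem \cite{Gr} to approximate the inclusion $A\to R$ by a map from $A$ to the henselization, agreeing with it modulo $\mathfrak{m}^n$; the curve appears by factoring this map through an \'etale neighbourhood. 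You instead make the henselization the base ring, write $R$ as a filtered colimit of smooth algebras over it by N\'eron--Popescu, descend $(\cX,\mathcal{L})$ to a smooth level $B$, and lift $B\to R/\mathfrak{m}^n$ to the henselization using smoothness and henselianity, keeping Greenberg in reserve for item (5), where the relevant scheme $Y$ (the closure of the image of the generic point of $\Spec R$) is not smooth and the formal lifting argument does not apply. This trades the paper's elementary spreading-out inside $R$ for Popescu's theorem, a much heavier input, in exchange for making the approximation step formal; both yield complete proofs. (One caveat: your opening ``reduction'' to the finitely presented case is not a reduction---finite type over $R$ is an implicit hypothesis of the statement, here as in the paper's own proof, since spreading out requires it.)

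There are two local flaws, neither fatal to the route. First, the Hilbert-scheme variant does not produce the data demanded by item (2): a Hilbert point only records the embedding defined by some power $\mathcal{L}^{\otimes d}$, so pulling back the universal family yields an approximation of $\mathcal{L}^{\otimes d}$, not of $\mathcal{L}$, and there is no canonical $d$-th root. Discard this variant; your general route covers the projective case, since ampleness spreads out and is preserved by base change (the paper instead obtains (4) automatically from openness of the ample locus in proper families, \cite[4.7.1]{ega3.1}). Second, in the big case of (5), your claim that the big locus ``is stable under specialisation and \ldots contains every point of $Y$'' is unjustified: upper semicontinuity of $h^0$ \cite[7.7.5]{ega3.2} applies only where the family is proper, and properness of the fibres is guaranteed only over a dense open $U\ni\eta_Y$ produced by spreading out, not over all of $Y$. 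The repair is exactly the argument you already gave for the ample case: sharpen the congruence to an order $n'\geq n$ large enough that the approximating point cannot factor through the closed set $Y\smallsetminus U$, so that the generic point of $C$ lands in $U$; there the generic fibre of $\cX'\to C$ is proper, and semicontinuity of $h^0$ propagates bigness from $\eta_Y$. This parallel treatment of the ample and big cases is also how the paper argues.
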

\begin{proof}
 We fix a $k$-algebra structure on $R$ by choosing a section of the projection morphism $R\to k$.
   By a standard spreading out argument \cite[\S8]{ega4.3}, we can find the following data:
\begin{itemize}
\item  a sub $k$-algebra $A$ of $R$ such that $A$ is integrally
closed
 and of finite type over $k$;

\item an $A$-scheme $\cX_A$ of finite type, a line bundle
$\mathcal{L}_A$ on $\cX_A$, an isomorphism of $R$-schemes
$$g\colon \cX\to \cX_A\times_A R$$ and an isomorphism of line bundles
$$g^*(\mathcal{L}_A\otimes_A R)\to \mathcal{L}$$ where we wrote
$\mathcal{L}_A\otimes_A R$ for the pullback of $\mathcal{L}_A$ to
$\mathcal{L}_A\times_A R$.
\end{itemize}
We denote by $a$ the image of the closed point of $\Spec R$ under
the morphism $$\Spec R\to \Spec A.$$ It follows
 from \cite[11.6.1]{ega4.3} that $\cX_A$ is flat over some open
 neighbourhood of $a$ in $\Spec A$. Replacing $A$ by a suitable
 localization, we may suppose that $\cX_A$ is flat over $A$.

 We denote by $R'$ the henselization of the local ring of $\A^1_k$ at the origin, and by $\frak{m}'$ its maximal ideal.
 The ring $R'$ is an excellent henselian discrete valuation ring whose
 completion is isomorphic to $R$; we fix such an isomorphism. By Greenberg's Approximation
 Theorem \cite[Thm.1]{Gr}, we can find a ring morphism $A\to R'$ such that
 the composition with the projection $R'\to R'/(\frak{m}')^n\cong R/\frak{m}^n$
 coincides with the morphism $A\to R/\frak{m}^n$ induced by the inclusion $A\to R$.
 We set
 $$\cX_{R'}=\cX_{A}\times_A R'$$ and we denote by
 $\mathcal{L}_{R'}$ the pullback of $\mathcal{L}_A$ to $\cX_{R'}$.

 The henselization $R'$ is a direct limit of
 local rings $\mathcal{O}_{C,O}$ where $C$ is an \'etale
 $\A^1_k$-scheme and $O$ is a $k$-rational point of $C$ lying over
 the origin of $\A^1_k$. Since $A$ is of finite type over $k$, we can find such a curve $C$ such that the morphism $\Spec R'\to \Spec A$ factors through a $k$-morphism $C\to \Spec A$.
 If we set $\cX'=\cX_A\times_A C$ and we denote by
 $\mathcal{L}'$ the pullback of $\mathcal{L}_A$ to $\cX'$, then
 the triple $(C,\cX',\mathcal{L}')$
 satisfies all the properties in \eqref{item:curve} and \eqref{item:models}.

Now we prove the list of properties in the second part of the
statement.

\eqref{item:proper} If $\cX$ is proper over $R$, then we can
 choose $A$ and $\cX_A$ in such a way that $\cX_A$ is proper over $A$ by \cite[8.10.5]{ega4.3};
then $\cX'$ is proper over $C$.

\eqref{item:ample} If $\cX$ is projective over $R$ and
$\mathcal{L}$ is ample, then $\mathcal{L}/\frak{m}$ is ample on
$\cX_k$,
 and it follows from \cite[4.7.1]{ega3.1} that $\mathcal{L}'$ is ample over some open
 neighbourhood of $O$ in $C$.

\eqref{item:genample} Assume that the restriction of $\mathcal{L}$
to $\cX_K$ is ample. Then by flat descent of ampleness
\cite[2.7.2]{ega4.2}, $\mathcal{L}_A$ is ample over the generic
point of $\Spec A$. Thus there exists a dense open subscheme $U$
of $\Spec A$ over which $\mathcal{L}_A$ is ample, by
\cite[8.10.5.2]{ega4.3}. We denote its complement by $Z$. To prove
our statement, we can always replace $n$ by a larger integer $n'$,
since the result will then also be valid for $n$. For $n'$
sufficiently large, the morphism $\Spec R/\frak{m}^{n'}\to \Spec
A$ does not factor though $Z$, since $\Spec R\to \Spec A$ is
dominant. Thus if $n'$ is large enough given our choice of $A$,
$\cX_A$ and $\mathcal{L}_A$, then the
 morphism $C\to \Spec A$ maps the generic point of $C$ to a point
 of $U$, which implies that $\mathcal{L}'$ is ample over the
 generic point of $C$.
  The statement for big line
 bundles can be proved in a similar way: the generic fiber of $\cX_A\to \Spec A$ is proper by flat descent of properness, and the line bundle $\mathcal{L}_A$ is big over the generic point of $\Spec
 A$ by flat base change for coherent cohomology. Then
 $\cX_A$ is proper over a dense open subset $U$ of $\Spec A$, and
 $\mathcal{L}_A$ is big over
 every point in $U$ by upper-semicontinuity of the function $$U\to \N,\,x\mapsto h^0(\cX_A\times_A x,\mathcal{L}^d_A\otimes_A \kappa(x))$$ for all $d>0$ \cite[7.7.5]{ega3.2}.

\eqref{item:regular} If $n\geq 2$, then the reduction of $\cX$
modulo
 $\frak{m}^2$ is isomorphic to the reduction of $\cX'$ modulo
 $\frak{m}_{C,O}^2$. Thus the Zariski tangent space of $\cX$ at
 any point of $\cX_k$ has the same dimension as the Zariski
 tangent space of $\cX'$ at the corresponding point of $\cX'\times_{C}O\cong
 \cX_k$. It follows that $\cX'$ is regular at every point over
 $O$ if $\cX$ is regular. The statement about $E$ is obvious. If $\cX$ is regular and proper over $R$, then we can take $\cX'$ to be proper over $C$ and regular at every point in the fiber over $O$. Then the
 image in $C$ of the
 singular locus of $\cX'$ is a closed subset of $C$ that does not
 contain $O$. Thus by shrinking $C$, we can arrange that $\cX'$ is
 regular.
\end{proof}

\subsection{Relative vanishing theorems over a ring of formal power series}
\sss Assume that $k$ has characteristic zero. We will extend some
relative vanishing theorems for morphisms of complex varieties to
the category of schemes of finite type over $R$. The
 standard proofs of these vanishing results use transcendental
methods and cannot be adapted to $R$-schemes in a direct way. So
we will use an approximation argument to reduce to the case where
our $R$-scheme is defined over an algebraic curve.
 In the proof of the connectedness theorem, we will only use
the torsion-free theorem (Theorem \ref{thm-torfree}), but we
decided to include the Kawamata-Viehweg vanishing theorem (Theorem
\ref{thm-vanish}) as well, because it is of independent interest
and the method of proof is similar. Theorem \ref{thm-vanish} and
Corollary \ref{cor-kodaira}
 were
also proven in Appendix B of \cite{BFJ}, but under more
restrictive conditions (there it was assumed that $\cX_k$ is a
strict normal crossings divisor) and with more complicated
arguments.


 \sss For notational convenience, we will denote $R/\frak{m}^n$ by
 $R_n$, and $\cX\times_R (R/\frak{m}^n)$ by $\cX_n$, for every
 integer $n>0$ and every $R$-scheme $\cX$. In particular, $R_0=k$ and $\cX_0=\cX_k$. If $\cX$ is normal and $E$ is a Cartier divisor on $\cX$, then we denote by
 $\mathcal{O}_{\cX_n}(E)$ the pullback of the line bundle
 $\mathcal{O}_{\cX}(E)$ to $\cX_n$.

\begin{theorem}[Kawamata-Viehweg vanishing]\label{thm-vanish}
Assume that $k$ has characteristic zero. Let $\cX$ be a regular
 projective flat $R$-scheme, and denote by $\omega_{\cX/R}$ its relative
canonical sheaf. Let $\Delta$ be a $\Q$-divisor on $\cX$,
supported on the special fiber $\cX_k$, such that $\Delta$ has
strict normal crossings and all multiplicities of its prime
divisors lie in $[0,1[$. Let $E$ be a divisor on $\cX$
 such that the $\Q$-divisor $E-\Delta$ is
 relatively nef and such that the restriction of $E$ to $\cX_K$ is ample. 
 Then
$$H^i(\cX,\omega_{\cX/R}\otimes \mathcal{O}_{\cX}(E))=0$$ for all
$i>0$.
\end{theorem}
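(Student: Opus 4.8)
The plan is to prove the vanishing by reducing it, via the Greenberg approximation of Proposition \ref{prop-approx}, to the classical relative Kawamata--Viehweg theorem over the characteristic-zero field $k$. Write $\cF=\omega_{\cX/R}\otimes\mathcal{O}_{\cX}(E)$. Since $\cX$ is proper over the Noetherian ring $R$ and $\cF$ is coherent, each $H^i(\cX,\cF)$ is a finitely generated $R$-module. Because $\cF$ is a line bundle on the $R$-flat scheme $\cX$, a uniformizer $\pi$ acts as a nonzerodivisor and there is a short exact sequence $0\to\cF\xrightarrow{\pi}\cF\to\cF_0\to 0$, where $\cF_0$ denotes the restriction of $\cF$ to $\cX_0=\cX_k$, pushed forward to $\cX$. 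The associated long exact sequence shows that if $H^i(\cX_0,\cF_0)=0$ for all $i>0$, then multiplication by $\pi$ is surjective on $H^i(\cX,\cF)$ for $i>0$; as this module is finitely generated over the local ring $R$ and $\pi\in\frak{m}$, Nakayama's lemma forces $H^i(\cX,\cF)=0$. Thus the whole theorem reduces to the vanishing $H^i(\cX_k,\cF|_{\cX_k})=0$ on the (possibly singular, non-reduced) special fiber.

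To reach that vanishing I would apply Proposition \ref{prop-approx} with $n=2$ to $\cX$ together with a relatively ample line bundle, the divisor $E$, and the components of $\Delta$, spreading all of this data out simultaneously. This produces a smooth connected $k$-curve $C$, a point $O$, and a flat projective family $\pi'\colon\cX'\to C$ with an isomorphism $\cX'_O\cong\cX_k$ matching $E$, $\Delta$ and $\mathcal{O}(E)$, where $\cX'$ is regular near $O$ (hence smooth over $k$, as $k$ is perfect) by \eqref{item:regular} and projective over $C$ by \eqref{item:proper} and \eqref{item:ample}. Since the relative dualizing sheaf commutes with base change for the flat local complete intersection morphisms in play, the restriction of $\cG':=\omega_{\cX'/C}\otimes\mathcal{O}_{\cX'}(E')$ to $\cX'_O$ is identified with $\cF|_{\cX_k}$; moreover \eqref{item:regular} guarantees that the transferred boundary $\Delta'$ is still strict normal crossings with the same coefficients, which lie in $[0,1)$. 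I would then invoke the relative Kawamata--Viehweg theorem (valid over any field of characteristic zero, by flat base change to $\C$) for $\pi'$ to obtain $R^i\pi'_*\cG'=0$ for all $i>0$, and read off the desired fiber vanishing by cohomology and base change along $O\to C$: once all higher direct images vanish near $O$, base change holds and $H^i(\cX'_O,\cG'|_{\cX'_O})=H^i(\cX_k,\cF|_{\cX_k})=0$.

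The hard part will be checking that the positivity hypotheses of relative Kawamata--Viehweg hold for $\pi'$: one needs $E'-\Delta'$ to be $\pi'$-nef and $\pi'$-big. Bigness is easy, since $\Delta'$ is supported on $\cX'_O$, so on the generic fiber $E'-\Delta'$ restricts to $E'$, which is ample there by \eqref{item:genample}. Nefness is the delicate point, because $\pi'$-nefness demands nonnegativity against curves in every fiber. Over $O$ the intersection numbers of $E'$ and $\Delta'$ with curves in $\cX'_O\cong\cX_k$ coincide with those of $E$ and $\Delta$ in $\cX_k$ --- they depend only on the restriction of the line bundles to the special fiber, which the approximation preserves --- so $(E'-\Delta')|_{\cX'_O}$ is nef by hypothesis. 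For a closed point $c\neq O$ the divisor $\Delta'$ does not meet $\cX'_c$, so one would need $E'$ itself nef on $\cX'_c$; this may fail for some $c$, but ampleness of $E'$ on the generic fiber spreads out to a dense open $V\subseteq C$, and after shrinking $C$ to $V\cup\{O\}$ every fiber other than $\cX'_O$ carries an ample, hence nef, restriction of $E'-\Delta'=E'$. With $C$ thus shrunk, $E'-\Delta'$ is $\pi'$-nef and $\pi'$-big, the theorem applies, and the argument closes.
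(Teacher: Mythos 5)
Your proposal is correct and takes essentially the same route as the paper: Greenberg approximation via Proposition \ref{prop-approx} to a family $\cX'\to C$ over a $k$-curve, the identical shrinking argument to make $E'-\Delta$ relatively nef (nef on the fiber over $O$ by hypothesis, ample hence nef on all other fibers after shrinking $C$) and big on the generic fiber, then relative Kawamata--Viehweg over $C$, and finally a transfer of the vanishing back to $\cX$. The only differences are organizational: you reduce to the special fiber $\cX_k$ up front via the multiplication-by-$\pi$ sequence and Nakayama and recover the fiber vanishing from $R^i\pi'_*=0$ by cohomology and base change, whereas the paper first gets vanishing on $\cX'$ itself, restricts to $\cX_k$ via the multiplication-by-$t$ sequence, and then lifts to $\cX$ by EGA semicontinuity --- the same steps in a different order (and your choice of $n=2$, needed to invoke part \eqref{item:regular} of Proposition \ref{prop-approx}, is if anything more careful than the paper's $n=1$).
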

\begin{proof}
 Set $\mathcal{L}=\mathcal{O}_{\cX}(E)$. We
set $n=1$ and we choose $C$, $\cX'$ and $\mathcal{L}'$ as in
Proposition \ref{prop-approx}, such that $\cX'$ is regular and
proper over $C$ and $\mathcal{L}'$ is ample on the generic fiber
of $\cX'\to C$. We choose a divisor $E'$ on $\cX'$ in the linear
equivalence class defined by the line bundle $\mathcal{L}'$.
Shrinking $C$, we may assume that $C$ is affine and $\mathcal{L}'$
is relatively ample over $C\smallsetminus \{O\}$, by
\cite[8.10.5.2]{ega4.3}. Then $E'-\Delta$ is relatively nef, since
the restriction of $\mathcal{L}'$ to $\cX'\times_{C}O\cong \cX_k$
is isomorphic to the restriction of $\mathcal{L}$ to $\cX_k$.

 By the relative version of the Kawamata-Viehweg
vanishing theorem \cite[2.17.3]{kollar}, we have
$$H^i(\cX',\omega_{\cX'/C}\otimes \mathcal{L}')=0$$ for
all $i>0$.
 Let
$t$ be a uniformizer in $\mathcal{O}_{C,O}$. Shrinking $C$, we may
assume that $t$ is regular at every point of $C$. Looking at the
long exact cohomology sequence associated to the short exact
sequence of $\mathcal{O}_{\cX'}$-modules
\begin{equation}\label{eq-seq}
\minCDarrowwidth15pt\begin{CD}0@>>> \omega_{\cX'/C}\otimes
\mathcal{L}'@>t\cdot
>> \omega_{\cX'/C}\otimes \mathcal{L}'@>>>
 \omega_{\cX_k/k}\otimes
\mathcal{O}_{\cX_k}(E)@>>> 0,\end{CD}\end{equation}
 we find that $$H^i(\cX_k,\omega_{\cX_k/k}\otimes
\mathcal{O}_{\cX_k}(E))=0$$ for all $i>0$. Now it follows from the
semicontinuity theorem for coherent cohomology
\cite[7.7.5]{ega3.2}, combined with the implication $e)\Rightarrow
d)$ in \cite[7.8.4]{ega3.2},
 that
$$H^i(\cX,\omega_{\cX/R}\otimes
\mathcal{O}_{\cX}(E))=0$$ for all $i>0$.
\end{proof}

\begin{cor}[Kodaira vanishing]\label{cor-kodaira} Assume that $k$ has characteristic
zero. Let $\cX$ be a regular flat projective $R$-scheme, and
denote by $\omega_{\cX/R}$ its relative canonical sheaf. Then for
every ample line bundle $\mathcal{L}$ on $\cX$ and every integer
$i>0$, we have
$$H^i(\cX,\omega_{\cX/R}\otimes\mathcal{L})=0.$$
\end{cor}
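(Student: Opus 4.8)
The plan is to deduce the statement immediately from the relative Kawamata--Viehweg vanishing theorem (Theorem~\ref{thm-vanish}) by specializing to the case $\Delta=0$. Since $\cX$ is regular, it is locally factorial, so every line bundle arises from a Cartier divisor; I would therefore choose a Cartier divisor $E$ on $\cX$ with $\mathcal{O}_{\cX}(E)\cong \mathcal{L}$. Taking $\Delta=0$, the hypothesis that $\Delta$ has strict normal crossings with all prime multiplicities in $[0,1[$ holds vacuously.

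It then remains to check the two positivity conditions in Theorem~\ref{thm-vanish} for this $E$ and $\Delta=0$. Because $\mathcal{L}$ is ample on $\cX$, the $\Q$-divisor $E-\Delta=E$ is in particular relatively ample over $R$, hence relatively nef, so the first condition is satisfied. For the second, ampleness of $\mathcal{L}$ is preserved under restriction to the generic fiber, so the restriction of $\mathcal{L}$, and thus of $E$, to $\cX_K$ is ample.

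With these verifications in place, Theorem~\ref{thm-vanish} applied to the triple $(\cX,E,\Delta=0)$ yields $H^i(\cX,\omega_{\cX/R}\otimes \mathcal{O}_{\cX}(E))=0$ for all $i>0$, which is exactly the desired equality $H^i(\cX,\omega_{\cX/R}\otimes \mathcal{L})=0$. Since the argument is a direct specialization of the already-established Theorem~\ref{thm-vanish}, I do not expect any substantial obstacle; the only points meriting a word of justification are the representation of $\mathcal{L}$ by a Cartier divisor (guaranteed by regularity of $\cX$) and the elementary observation that ampleness forces both relative nefness on $\cX$ and ampleness on the generic fiber $\cX_K$.
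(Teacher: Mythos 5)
Your proposal is correct and is essentially identical to the paper's own proof, which also deduces the statement from Theorem~\ref{thm-vanish} by taking $\Delta=0$ and $E$ a Cartier divisor representing $\mathcal{L}$. The only difference is that you spell out the routine verifications (existence of $E$, relative nefness, ampleness on $\cX_K$) that the paper leaves implicit.
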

\begin{proof}
This is a special case of Kawamata-Viehweg vanishing, with
$\Delta=0$ and $E$ the Cartier divisor associated to
$\mathcal{L}$.
\end{proof}

\begin{remark}
One can generalize Theorem \ref{thm-vanish} by requiring  that $E$
is big on $\cX_K$ instead of ample, as in the Kawamata-Viehweg
vanishing theorem formulated in \cite[2.17.3]{kollar}. By
Proposition \ref{prop-approx},  we can find $\cX'$, $C$ and
$\mathcal{L}'$ as in the proof of Theorem \ref{thm-vanish} such
that $\mathcal{L}'$ is big on the generic fiber of $\cX'\to C'$.
But
 we do not know if one can arrange moreover that $\mathcal{L}'$ is
 relatively nef. The problem is that nefness behaves poorly in
 families: even if $\mathcal{L}'$ is nef over $O$ and over the
 generic point of $C$, this does not guarantee that $\mathcal{L}'$
 is nef over some open neighbourhood of $O$ in $C$. However, one
 can circumvent this problem
by means of the following variant of the Kawamata-Viehweg
vanishing theorem. Let $k$ be a field of characteristic zero and
let $f\colon X\to S$ be a surjective projective morphism of
schemes of finite type over $k$, with $X$ smooth and connected.
 Let $s$ be a closed point on  $S$. Let $\Delta$ be a $\Q$-divisor on $X$
 such that $\Delta$ has strict normal
crossings and all multiplicities of its prime divisors lie in
$[0,1[$. Let $E$ be a divisor on $X$
 such that the restriction of the $\Q$-divisor $E-\Delta$ to $X_s=f^{-1}(s)$
 is nef and such that the restriction of $E-\Delta$ to the generic fiber of $f$ is
big. Then we have $R^if_*\mathcal{O}_X(K_X+E)_s=0$ for every
$i\geq 1$.
 This variant of the Kawamata-Viehweg
vanishing theorem can be proven with the same arguments as in
\cite[Theorem~1-2-3]{KMM}.
\end{remark}

\begin{theorem}[Torsion-free theorem]\label{thm-torfree}
Assume that $k$ has characteristic zero.  Let $\cX$ be a regular
proper flat $R$-scheme, and denote by $\omega_{\cX/R}$ its
relative canonical sheaf. Let $\Delta$ be a $\Q$-divisor on $\cX$,
supported on the special fiber $\cX_k$, such that $\Delta$ has
strict normal crossings and all multiplicities of its prime
divisors lie in $[0,1[$. Let $E$ be a divisor on $\cX$ such that
the $\Q$-divisor $E-\Delta$ is a rational multiple of $\cX_k$.
 Then
$$H^i(\cX,\omega_{\cX/R}\otimes \mathcal{O}_{\cX}(E))$$ is a free $R$-module for all
$i\geq 0$.
\end{theorem}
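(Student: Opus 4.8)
The plan is to follow the pattern of the proof of Theorem \ref{thm-vanish}: transport the situation to a scheme over an algebraic $k$-curve by means of the algebraic approximation of Proposition \ref{prop-approx}, and apply Koll\'ar's torsion-free theorem there. The new difficulty, compared with the vanishing statement, is that freeness of a finitely generated module over the discrete valuation ring $R$ is not detected on the special fibre alone; so the single reduction modulo $\frak{m}$ used in Theorem \ref{thm-vanish} will not suffice, and I would first reduce the assertion to a control of all the truncations $\cX_n=\cX\times_R R_n$.

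Set $\mathcal{F}=\omega_{\cX/R}\otimes\mathcal{O}_{\cX}(E)$; this is a line bundle, flat over $R$ because $\cX$ is $R$-flat. Since $\cX$ is proper over $R$, there is a bounded complex $K^{\bullet}$ of finite free $R$-modules computing $R\Gamma(\cX,\mathcal{F})$ and all of its base changes, so that $H^i(\cX_n,\mathcal{F}\otimes_R R_n)=H^i(K^{\bullet}\otimes_R R_n)$ for every $n$. Over the discrete valuation ring $R$ the complex $K^{\bullet}$ is isomorphic to a direct sum of elementary complexes, namely copies of $R$ in a single degree and two-term complexes $R\xrightarrow{\pi^{a}}R$ in consecutive degrees, and the torsion of $H^{\bullet}(K^{\bullet})$ is produced exactly by the pieces with $a\ge 1$. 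A piece $R\xrightarrow{\pi^{a}}R$ with $1\le a<n$ contributes a non-free summand $R/\frak{m}^{a}$ to $H^i(K^{\bullet}\otimes_R R_n)$. Hence, if $H^i(\cX_n,\mathcal{F}\otimes_R R_n)$ is free over $R_n$ for every $i$ and for arbitrarily large $n$, then $K^{\bullet}$ carries no such torsion piece and $H^i(\cX,\mathcal{F})$ is free over $R$ for all $i$. This reduces the theorem to proving freeness at each finite level.

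Now fix an integer $n\ge 2$ and apply Proposition \ref{prop-approx} to $\cX$ and the line bundle $\mathcal{F}$. By parts \eqref{item:models}, \eqref{item:proper} and \eqref{item:regular} I may choose a smooth $k$-curve $C$, a $k$-point $O$, and a scheme $\cX'$, regular and proper over $C$, together with an isomorphism $\cX_n\cong \cX'\times_C(\mathcal{O}_{C,O}/\frak{m}^{n})$ over $R_n\cong \mathcal{O}_{C,O}/\frak{m}^{n}$ and a matching isomorphism of the reductions of $\mathcal{F}$. The divisors $\Delta$ and $E$ are supported on $\cX_k$, since $E=\Delta+c\,\cX_k$ for the rational number $c$ with $E-\Delta=c\,\cX_k$; they therefore transport to divisors $\Delta'$ and $E'$ on $\cX'$, both supported on the fibre $\cX'_O$, with $\Delta'$ of strict normal crossings and coefficients in $[0,1[$, and with $E'-\Delta'=c\,\cX'_O=g^{*}(c\cdot O)$ for $g\colon \cX'\to C$. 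In particular $E'-\Delta'$ is numerically trivial over $C$. As $k$ has characteristic zero, $\cX'$ is regular and hence smooth over $k$, and the relative dualizing sheaves are compatible with the base change, so $\mathcal{F}\otimes_R R_n$ is identified with the reduction of $\mathcal{F}'=\omega_{\cX'/C}\otimes\mathcal{O}_{\cX'}(E')$. Koll\'ar's (relative) torsion-free theorem, applied to the proper morphism $g$, then shows that $R^ig_{*}\mathcal{F}'$ is torsion-free for every $i$.

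On the smooth curve $C$ a torsion-free coherent sheaf is locally free, so each $R^ig_{*}\mathcal{F}'$ is locally free near $O$; since this holds in every degree, cohomology and base change give $H^i\bigl(\cX'\times_C(\mathcal{O}_{C,O}/\frak{m}^{n}),\mathcal{F}'\otimes R_n\bigr)\cong (R^ig_{*}\mathcal{F}')_O\otimes_{\mathcal{O}_{C,O}}R_n$, which is free over $R_n$. Transporting through the identifications of Proposition \ref{prop-approx} shows that $H^i(\cX_n,\mathcal{F}\otimes_R R_n)$ is free over $R_n$, for every $i$. As $n\ge 2$ was arbitrary, the criterion established above applies and the proof is complete. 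The main obstacle is exactly this passage from the finite levels $R_n$ to $R$: the point is to recognise that freeness over the DVR is governed by the reductions modulo all powers of $\frak{m}$, which is what renders Proposition \ref{prop-approx}, valid only to finite order, applicable. A secondary point is to match the hypotheses of the torsion-free theorem, for which one checks that $E'-\Delta'$ is a pullback from $C$ and that regularity in characteristic zero provides the smoothness of $\cX'$ required to invoke \cite{kollar}.
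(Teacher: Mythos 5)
Your proof is correct, and its core coincides with the paper's: fix $n$, approximate $(\cX,\mathcal{F})$ to order $n$ by a regular scheme $\cX'$ proper over a smooth $k$-curve $C$ (Proposition \ref{prop-approx}), transport $E$ and $\Delta$ to divisors supported on the fibre over $O$, observe that $E'-\Delta'=g^*(c\cdot O)$ is a pullback from the curve so that Koll\'ar's theorem \cite[2.17.4]{kollar} makes every $R^ig_*(\omega_{\cX'/C}\otimes\mathcal{O}_{\cX'}(E'))$ torsion-free, hence locally free on $C$, and conclude that $H^i(\cX_n,\mathcal{F}\otimes_R R_n)$ is free over $R_n$. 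Where you genuinely differ is in the homological bookkeeping around this core. At finite level you use cohomology and base change (all $R^ig_*$ locally free implies base change in every degree), while the paper uses the explicit multiplication-by-$t$ short exact sequence together with the remark that a coboundary map from a torsion module to a torsion-free sheaf must vanish; these are interchangeable. More substantially, for the passage from all finite levels back to $R$, the paper writes $M=H^i(\cX,\mathcal{F})$ as the inverse limit of the modules $M_n=H^i(\cX_n,\mathcal{F}\otimes_R R_n)$ via Grothendieck's comparison theorem \cite[4.1.5]{ega3.1} and then kills torsion directly: an element of $M$ killed by $t$ maps into the $t$-torsion of $M_{n+1}$, which equals $t^nM_{n+1}$ by freeness and hence dies in $M_n$, so the element vanishes in the limit and $M$ (being finitely generated) is free. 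You instead invoke the universal bounded complex $K^\bullet$ of finite free $R$-modules computing cohomology after arbitrary base change, decompose it by Smith normal form over the DVR into copies of $R$ in single degrees and two-term pieces $R\xrightarrow{\pi^a}R$, and note that freeness of $H^i(K^\bullet\otimes_R R_n)$ for arbitrarily large $n$ excludes every piece with $a\geq 1$ (a direct summand of a free module over the local ring $R_n$ is free, so a summand $R/\frak{m}^a$ with $a<n$ is fatal). Both mechanisms are valid; yours isolates a clean, reusable algebraic criterion --- freeness over $R$ is governed by freeness of all the truncated cohomologies --- at the cost of the perfect-complex formalism, whereas the paper's limit argument needs only the comparison theorem and a one-line torsion computation.
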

\begin{proof}
\if false
 Let $(M_n)_{n>0}$ be a projective system of $R$-modules.
We denote by $M$ its projective limit. Moreover, we denote for
every $n> 0$ by $M'_n$ the submodule of $M_n$ consisting of
elements killed by $t$.  By left exactness of the projective
limit, we know that
$$M'=\lim_{\stackrel{\longleftarrow}{n}}M_n'$$ is the submodule of
$M$ consisting of the elements killed by $t$.  Assume that the
projective system $(M'_n)_{n>0}$ is essentially zero, which means
that for each $n>0$, there exists an integer $m>n$ such that the
transition morphism $M_m\to M_n$ is the zero map. Then $M'=0$, so
that $M$ has no $t$-torsion. \fi
 We fix an integer $i\geq0$ and
set
$$M_n=H^i(\cX_n,\omega_{\cX_n/R_n}\otimes \mathcal{O}_{\cX_n}(E))$$ for all
$n>0$. These $R$-modules form a projective system, whose limit is
precisely the cohomology module
$$M=H^i(\cX,\omega_{\cX/R}\otimes \mathcal{O}_{\cX}(E))$$ by
Grothendieck's comparison theorem \cite[4.1.5]{ega3.1}.
 Fix an integer $n>1$,
and choose $C$ and $\cX'$ as in Proposition \ref{prop-approx},
with $\cX'$ regular and
 proper over $C$. Then we can view $E$ as a divisor on $\cX'$, supported on the fiber over $O$, which is isomorphic to $\cX_k$. We denote by $f\colon \cX'\to C$ the structural
morphism. Koll\'ar's torsion-free theorem
\cite[2.17.4]{kollar} implies that
$$R^jf_*(\omega_{\cX'/C}\otimes \mathcal{O}_{\cX'}(E))$$ is
 locally free for all $j\geq 0$.
 Thus the short exact sequence \eqref{eq-seq} induces an
 exact sequence
 $$R^if_*(\omega_{\cX'/C}\otimes \mathcal{O}_{\cX'}(E))\to R^if_*(\omega_{\cX'/C}\otimes \mathcal{O}_{\cX'}(E))\to M_n\to 0$$
(the coboundary map must be zero because $M_n$ is a torsion
module).
 Hence, we get a canonical isomorphism
$$M_n\cong R^if_*(\omega_{\cX'/C}\otimes \mathcal{O}_{\cX'}(E))\otimes_{\mathcal{O}_C}(\mathcal{O}_{C,O}/\frak{m}_{C,O}^n).$$
 and
  $M_n$ is a free $R_n$-module for all $n>0$. This implies that $M$ has no
  $t$-torsion, because every element in $M_{n+1}$ killed by $t$ is
  mapped to $0$ in $M_n$.
\end{proof}

\begin{remark} With a little extra work, Theorem \ref{thm-torfree}
can be generalized: one can drop the assumption that $\Delta$ and
$E$ are supported on the special fiber $\cX_k$, and require
 only that $E-\Delta$ is $\Q$-linearly equivalent to a rational multiple
of $\cX_k$ (instead of equal). Indeed, by choosing $A$
sufficiently large in Proposition \ref{prop-approx} we can assume
that $E$, $\Delta$ and the $\Q$-linear equivalence are defined
over $A$, and by shrinking $C$ we can assume that the pullback of
$\Delta$ to $\cX'$ has strict normal crossings, so that we still
can apply Koll\'ar's torsion-free theorem.
\end{remark}

\subsection{The skeleton is connected}
\sss We will now show that the skeleton of a  smooth, proper,
geometrically connected $K$-variety of geometric genus one is
always connected. We will explain in \eqref{sss-shoko} how this
result can be viewed as a global version of the connectedness
theorem of Koll\'ar and Shokurov. The proof is somewhat different:
we cannot use Kawamata-Viehweg vanishing because the divisor $D$
that appears in the proof is not big on the generic fiber $\cX_K$.
Instead, we will apply the torsion-free theorem.

\sss We will use the following terminology for divisors on
$R$-schemes. Let $\cX$ be a flat
 $R$-scheme of finite type, and let $D$ be a Weil divisor on $\cX$.
 Then we can write $$D=D^{+}-D^{-}$$ such that $D^{+}$ and $D^{-}$
 are effective and have no common prime divisors. We call $D^{+}$
 and $D^{-}$ the positive, resp.~negative part of $D$.

\begin{theorem}\label{thm-connected}
Assume that $k$ has characteristic zero. Let $X$ be a proper
smooth geometrically connected $K$-variety
 of geometric genus one, and let $\omega$ be a non-zero differential form of
maximal degree on $X$. Then for every $sncd$-model $\cX$ of $X$,
the union of the $\omega$-essential components of $\cX_k$ is
connected.
\end{theorem}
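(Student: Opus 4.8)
The plan is to recast the statement as an instance of the Shokurov--Koll\'ar connectedness theorem, and to replace the Kawamata--Viehweg vanishing that is normally used (but which fails here) by the torsion-free theorem, Theorem \ref{thm-torfree}, fed with the geometric genus one hypothesis.

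Since $m=1$ and $\cX$ is regular, the rational section $\omega$ of $\omega_{\cX/R}$ has divisor $\mathrm{div}_{\cX}(\omega)=\sum_{i\in I}(\mu_i-1)E_i$, which is therefore a canonical divisor $K_{\cX/R}$. Writing $c=\weight_{\omega}(X)=\min_{i}\mu_i/N_i$ and $J_0=\{i\in I:\mu_i/N_i=c\}$, I would set
$$B=c\,\cX_k-\mathrm{div}_{\cX}(\omega)=\sum_{i\in I}(cN_i-\mu_i+1)E_i.$$
Then $K_{\cX/R}+B=c\,\cX_k$, which is principal, so $K_{\cX/R}+B\sim_{\Q}0$; moreover the coefficient of $E_i$ in $B$ equals $1$ precisely when $i\in J_0$ and is $<1$ otherwise. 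Hence, by Theorem \ref{thm-KS}, the union $F=\sum_{i\in J_0}E_i$ of the $\omega$-essential components is exactly the locus where $B$ has coefficient $\geq1$, that is, the non-klt locus of the pair $(\cX,B)$, and the assertion to be proved is that this locus is connected.

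The classical proof of connectedness applies Kawamata--Viehweg vanishing to the sheaf $\mathcal{O}_{\cX}(-\lfloor B\rfloor)=\omega_{\cX/R}(-c\,\cX_k+\{B\})$, using that $-(K_{\cX/R}+B)=-c\,\cX_k$ is relatively nef and big. Here it is only nef: its restriction to the generic fibre $X$ is numerically trivial, so Theorem \ref{thm-vanish} does not apply. However, the divisor $(-c\,\cX_k+\{B\})-\{B\}=-c\,\cX_k$ is a rational multiple of $\cX_k$ and $\{B\}$ is supported on $\cX_k$ with coefficients in $[0,1)$, so Theorem \ref{thm-torfree} does apply and shows that $H^i(\cX,\mathcal{O}_{\cX}(-\lfloor B\rfloor))$ is a free $R$-module for all $i$. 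I would then consider the restriction sequence
$$0\to \mathcal{O}_{\cX}(-\lfloor B\rfloor)\to \mathcal{O}_{\cX}(-\lfloor B\rfloor+F)\to \mathcal{O}_{\cX}(-\lfloor B\rfloor+F)|_F\to 0,$$
whose middle sheaf is effective away from $F$ and whose quotient, by adjunction along the reduced snc divisor $F$, is a twist of the dualizing sheaf $\omega_F$.

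The crux --- and the step I expect to be the main obstacle --- is the dimension count, where freeness must do the work that bigness would otherwise do. Since $-\lfloor B\rfloor$ is vertical, $\mathcal{O}_{\cX}(-\lfloor B\rfloor)$ restricts to $\omega_{X/K}$ on $\cX_K$, so its generic cohomology is governed by $h^0(X,\omega_{X/K})=1$ (geometric genus one); freeness then forces the special-fibre space of canonical sections to be one-dimensional as well, via cohomology and base change. On the other hand, if $F=F_1\sqcup F_2$ were disconnected, then $\mathcal{O}_{\cX}(-\lfloor B\rfloor+F)|_F$ splits as a direct sum over $F_1$ and $F_2$, and the minimality of the weight on $J_0$ (so that the normalized form $\pi^{-c}\omega$ restricts to a nonzero section along every essential component) produces a nonzero section on each summand. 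I would then use the freeness of $H^1(\cX,\mathcal{O}_{\cX}(-\lfloor B\rfloor))$, together with the vanishing of the relevant connecting map on the generic fibre coming from genus one, to lift these to two linearly independent global canonical sections on the special fibre, contradicting the one-dimensionality just established. The delicate points to secure are the precise adjunction identification of the quotient sheaf (keeping track of the different along $F$), and, above all, the verification that the coboundary map in the sequence vanishes on the special fibre, for which the freeness supplied by Theorem \ref{thm-torfree} is the essential ingredient replacing the missing bigness.
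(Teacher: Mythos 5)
Your construction is, once the MMP packaging is unwound, the same as the paper's: your $-\lfloor B\rfloor$ is exactly the divisor $D=\mathrm{div}_{\cX}(\omega)-\alpha\cX_k+\Delta$ that the paper works with (where $\alpha=c$ and $\Delta$ is the fractional part of $\alpha\cX_k$), your restriction sequence is the paper's sequence $0\to \mathcal{O}_{\cX}(D)\to \mathcal{O}_{\cX}(D^{+})\to \mathcal{O}_{D^{-}}(D^{+})\to 0$ with $F=D^{-}$ and $-\lfloor B\rfloor+F=D^{+}$, and the decisive use of Theorem \ref{thm-torfree} to make $H^1(\cX,\mathcal{O}_{\cX}(-\lfloor B\rfloor))$ free is precisely the paper's key step (the connecting map then vanishes because its image is a torsion submodule of a free module; genus one plays no role in that, contrary to your phrase ``vanishing of the connecting map on the generic fibre coming from genus one''). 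The gap is in your final contradiction. The one-dimensionality you establish by cohomology and base change concerns $H^0(\cX_k,\mathcal{O}_{\cX}(-\lfloor B\rfloor)|_{\cX_k})$, but the two sections you produce from a disconnection $F=F_1\sqcup F_2$ lift to sections of the strictly larger sheaf $\mathcal{O}_{\cX}(-\lfloor B\rfloor+F)$; there is no map from its special-fibre sections to those of $\mathcal{O}_{\cX}(-\lfloor B\rfloor)$ (the inclusion of sheaves goes the other way), so no contradiction results. Nor can you rerun the base-change argument for the larger sheaf: Theorem \ref{thm-torfree} does not apply to $\mathcal{O}_{\cX}(-\lfloor B\rfloor+F)$, because the boundary it would require has coefficient exactly $1$ along every component of $F$; the dimension of $H^0(\cX_k,\mathcal{O}_{\cX}(-\lfloor B\rfloor+F)|_{\cX_k})$ exceeds one precisely by the $\pi$-torsion of $H^1(\cX,\mathcal{O}_{\cX}(-\lfloor B\rfloor+F))$, which nothing in your argument controls.

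The repair is simpler than what you sketch and is the paper's actual finish: apply the genus-one hypothesis to the middle term, not to $\mathcal{O}_{\cX}(-\lfloor B\rfloor)$. The restriction of $-\lfloor B\rfloor+F$ to $X=\cX_K$ is the effective canonical divisor $\mathrm{div}_X(\omega)$, so $H^0(X,\mathcal{O}(-\lfloor B\rfloor+F)|_X)=K$, and hence $H^0(\cX,\mathcal{O}_{\cX}(-\lfloor B\rfloor+F))$ is a finitely generated torsion-free $R$-module of rank one, i.e.\ isomorphic to $R$. By the surjectivity coming from freeness, the $k$-vector space $H^0(F,\mathcal{O}_{\cX}(-\lfloor B\rfloor+F)|_F)$ is a quotient of this cyclic $R$-module, hence has dimension at most one; on the other hand it contains all locally constant functions on $F$, since $-\lfloor B\rfloor+F$ is effective with no component inside $F$ (you do not need the section ``$\pi^{-c}\omega$'', which is not even defined when $c\notin\Z$). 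Disconnectedness of $F$ would force dimension at least two, a contradiction -- with no base change, no adjunction along $F$, and no lifting to the special fibre. Finally, beware that your two descriptions of $B$ disagree: $c\,\cX_k-\mathrm{div}_{\cX}(\omega)$ contains, with negative sign, the horizontal closure of the zero locus of $\omega$ on $X$ (nonempty, e.g., for a blown-up K3 surface), so it is not equal to $\sum_{i}(cN_i-\mu_i+1)E_i$ and $-\lfloor B\rfloor$ is not vertical. You must keep this horizontal part, which is effective and sits harmlessly inside $-\lfloor B\rfloor+F$: dropping it both falsifies the identification of $\mathcal{O}_{\cX}(-\lfloor B\rfloor)|_{\cX_K}$ with $\omega_{X/K}$ and destroys the hypothesis of Theorem \ref{thm-torfree}, which requires $E-\Delta$ to be a rational multiple of $\cX_k$.
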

\begin{proof}
  We write
$$\cX_k=\sum_{i\in I}N_i E_i$$ and we denote by $\mu_i-1$ the
multiplicity of $E_i$ in $\mathrm{div}_{\cX}(\omega)$, for each
$i\in I$. We set
$$\alpha=\min\{\mu_i/N_i\,|\,i\in I\}.$$
 We denote by $\Delta$ the fractional part of the $\Q$-divisor
 $\alpha\cX_k$. Then, by definition
 of $\alpha$, none of the prime components of $\Delta$
 is $\omega$-essential. Moreover, the negative part $D^{-}$ of the divisor
$$D=\mathrm{div}_{\cX}(\omega)-\alpha\cX_k+\Delta$$ is reduced, and its support is precisely the union of
 the $\omega$-essential components of $\cX_k$. Thus we must show that $D^{-}$ is connected.

 From the short exact
sequence of $\mathcal{O}_{\cX}$-modules
$$0\to \mathcal{O}_{\cX}(D)\to \mathcal{O}_{\cX}(D^{+})\to
\mathcal{O}_{D^{-}}(D^{+})\to 0$$ we obtain an exact sequence of
cohomology $R$-modules
$$\minCDarrowwidth15pt\begin{CD}H^0(\cX,\mathcal{O}_{\cX}(D^{+})) @>f>>
H^0(D^{-},\mathcal{O}_{D^{-}}(D^{+}))@>>>
H^1(\cX,\mathcal{O}_{\cX}(D)).\end{CD}$$ By Theorem
\ref{thm-torfree}, we know that the $R$-module
$$H^1(\cX,\mathcal{O}_{\cX}(D))$$ is free. On the other hand, the cokernel of
$$f\colon H^0(\cX,\mathcal{O}_{\cX}(D^{+})) \to
H^0(D^{-},\mathcal{O}_{D^{-}}(D^{+}))$$ is a torsion $R$-module.
As it injects into a free $R$-module, it must be zero, and $f$  is
surjective.

The restriction of $D^+$ to $X=\cX_K$ is a canonical divisor.
Since the geometric genus of $X$ is one,
 we have
$H^0(X,\mathcal{O}_{\cX}(D^{+})|_{X})=K$ and thus
$$H^0(\cX,\mathcal{O}_{\cX}(D^{+}))\cong R.$$
Now the surjectivity of $f$ implies that the $k$-vector space
$$H^0(D^{-},\mathcal{O}_{D^{-}}(D^{+}))$$ has rank one.
 But all locally constant
 $k$-valued functions on $D^{-}$ belong to this space, so that every locally constant function on $D^{-}$ is constant and $D^{-}$ is connected.
\end{proof}
\begin{cor}[Connectedness theorem]\label{cor-connected}
Assume that $k$ has characteristic zero. Let $X$ be a proper
smooth geometrically connected $K$-variety of geometric genus one,
and let $\omega$ be a non-zero differential form of maximal degree
on $X$.
 Then the skeleton $\mathrm{Sk}(X,\omega)$
of $X$ is connected.  In particular, if $\omega_{X/K}$ is trivial,
then the essential skeleton $\Sk(X)$ of $X$ is connected.
\end{cor}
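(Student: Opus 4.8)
The plan is to derive the statement from the explicit description of $\Sk(X,\omega)$ in Theorem \ref{thm-KS} together with the connectedness of the special fibre provided by Theorem \ref{thm-connected}. First I would produce a proper $sncd$-model. Since $k$ has characteristic zero, Hironaka's resolution of singularities (see \eqref{sss-sncd}) shows that the proper variety $X$ admits a proper $sncd$-model $\cX$. As a global section of $\omega_{X/K}$, the form $\omega$ is regular everywhere on $X$, so the hypotheses of \eqref{sss-KS} hold and Theorem \ref{thm-KS} applies: writing $\cX_k=\sum_{i\in I}N_iE_i$ and letting $\mu_i-1$ be the multiplicity of $E_i$ in $\mathrm{div}_{\cX}(\omega)$, the skeleton $\Sk(X,\omega)$ is the union of the open faces of $\Sk(\cX)$ attached to the $\omega$-essential points of $\cX_k$, and it is a non-empty compact subspace.

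Next I would pin down the combinatorial shape of $\Sk(X,\omega)$. Let $Z$ be the closure in $\cX$ of the zero locus of $\omega$ on $X$; it is a horizontal closed subset. A generic point $\xi$ of a stratum $\bigcap_{j\in J}E_j$ is $\omega$-essential exactly when $\mu_j/N_j=\min_i\{\mu_i/N_i\}$ for all $j\in J$ and $\xi\notin Z$. Because $Z$ is closed, the condition $\xi\notin Z$ is inherited by every generization of $\xi$, so $\Sk(X,\omega)$ is stable under passing to faces and is a genuine subcomplex of $\Sk(\cX)$. Its vertices are precisely the components $E_i$ with $\mu_i/N_i$ minimal: no such vertical component can be contained in the horizontal set $Z$. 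These are exactly the prime divisors occurring in the negative part $D^-$ appearing in the proof of Theorem \ref{thm-connected}.

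The geometric heart is Theorem \ref{thm-connected}, which asserts that $D^-$, the union of the $\omega$-essential components of $\cX_k$, is connected. From here I would pass to topology via the standard equivalence between connectedness of a strict normal crossings divisor and connectedness of its dual complex. The step I expect to be the main obstacle is that the condition $\xi\notin Z$ may delete certain positive-dimensional faces whose interiors meet $Z$, so that a priori $\Sk(X,\omega)$ is only a subcomplex of the dual complex $\Delta(D^-)$ and a deleted face could, in principle, threaten connectedness. To control this I would use that $\Sk(X,\omega)$ contains \emph{every} vertex of $\Delta(D^-)$, and that on $|\Delta(D^-)|$ the weight function equals $\weight_{\omega}(X)+v_{\cdot}(H)$, where $H\geq 0$ is the horizontal part of $\mathrm{div}_{\cX}(\omega)$; thus $\Sk(X,\omega)$ is exactly the zero set of the non-negative piecewise affine function $x\mapsto v_x(H)$, which by Proposition \ref{prop-pieceaff1} is moreover concave on each face (as $H$ has no poles). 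Since $\Sk(X,\omega)$ is independent of the model, the remaining task is to verify that the deleted faces never separate the vertex set; I would approach this either by exhibiting a proper $sncd$-model dominating $\cX$ in which $Z$ meets no stratum joining two $\omega$-essential components, or by a direct argument combining the concavity of $v_{\cdot}(H)$ with the fact that it vanishes at all vertices. Granting this, connectedness of $D^-$ forces connectedness of $\Sk(X,\omega)$.

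Finally, the ``in particular'' statement is immediate and side-steps the obstacle entirely. If $\omega_{X/K}$ is trivial then $\omega$ is nowhere vanishing, so $Z=\emptyset$, the condition $\xi\notin Z$ is vacuous, and $\Sk(X,\omega)$ coincides with the full dual complex $\Delta(D^-)$, which is connected precisely because $D^-$ is. As recalled in \S\ref{ss-essential}, in this case $\Sk(X)=\Sk(X,\omega)$, so the essential skeleton $\Sk(X)$ is connected as well.
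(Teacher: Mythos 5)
Your setup is correct and matches the paper up to the point you yourself flag as the main obstacle: after fixing a proper $sncd$-model $\cX$, Theorem \ref{thm-connected} gives connectedness of the union $D^-$ of the $\omega$-essential components, but $\Sk(X,\omega)$ is only the subcomplex of $\Delta(D^-)$ consisting of faces whose corresponding strata avoid the closure $Z$ of the zero locus of $\omega$, and a stratum joining two essential components may well lie inside $Z$. Unfortunately, neither of your two proposed ways around this closes the gap. The concavity route provably cannot work: a non-negative concave piecewise affine function vanishing at the vertices of a simplex can be strictly positive on the interior of that simplex (on $[0,1]$, think of $\min\{x,1-x\}$), and this is \emph{exactly} what happens when a connected component $C$ of $E_i\cap E_j$ is contained in $Z$ --- writing $H$ for the horizontal part of $\mathrm{div}_{\cX}(\omega)$, a local equation of $H$ at the generic point of $C$ lies in the ideal $(z_i,z_j)$, so $v_x(H)$ vanishes at the two endpoints of the edge and is positive on its interior. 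So concavity plus vanishing at all vertices is consistent with the zero set of $v_\cdot(H)$ being disconnected; it gives no control over precisely the configuration you need to exclude.

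The model-modification route is the right idea, but as you state it (``a proper $sncd$-model dominating $\cX$'', i.e.\ keeping $X$ fixed) it cannot be carried out in general: to make $Z$ cross the vertical strata properly one must blow up centers that meet the generic fiber (for instance the zero locus $Z(\omega)\subset X$ may itself be singular, so $\cX'_k+Z'$ can never be made snc without modifying $X$), and such blow-ups change $X$. The missing ingredient, which is the key step in the paper's proof, is the birational invariance of the Kontsevich-Soibelman skeleton (Proposition \ref{prop-birat}): one chooses a proper birational $R$-morphism $h\colon \cX'\to\cX$, an isomorphism over $\cX\smallsetminus Z$, with $\cX'$ regular, $\cX'_K$ smooth and proper, and $\cX'_k+h^*Z$ a strict normal crossings divisor --- accepting that $\cX'_K$ is a blow-up of $X$, which is harmless since $\Sk(X,\omega)=\Sk(\cX'_K,h_K^*\omega)$ and the geometric genus is a birational invariant. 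Then the closure $Z'$ of the zero locus of $h_K^*\omega$ is horizontal and contained in $h^*Z$, so the normal crossings property forces, by a dimension count, that $Z'$ contains no connected component of an intersection of two components of $\cX'_k$; hence every edge of the dual graph of the essential locus of $\cX'_k$ survives into the skeleton, and Theorems \ref{thm-KS} and \ref{thm-connected} applied to $\cX'$ give connectedness. Your treatment of the trivial-canonical-bundle case (and hence the ``in particular'' statement) is complete and agrees with the paper.
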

\begin{proof}
If $\omega_{X/K}$ is trivial, then this is a direct consequence of
Theorems \ref{thm-KS} and \ref{thm-connected}. In the general
case, we need to be more careful: if $\omega$ is a non-zero
differential form of maximal degree on $X$, $\cX$ is an
$sncd$-model of $X$ and $Z$ is the schematic closure in $\cX$ of
the zero locus of $\omega$ in $X$, then an intersection of two
$\omega$-essential irreducible components of $\cX_k$ does not
contribute to the skeleton if it is contained in
 $Z$. However, the
 Kontsevich-Soibelman skeleton is a birational invariant by
Proposition \ref{prop-birat}, and we can always find a proper
birational $R$-morphism $h\colon \cX'\to \cX$ such that $\cX'$ is
regular, $\cX'_K$ is smooth and proper over $K$, $h$ is an
isomorphism over $\cX\smallsetminus Z$ and $ \cX'_k+h^{*}Z$
 is a divisor with strict normal crossings on
 $\cX'$. Note that $\cX'_K$ has geometric genus one, because the
 geometric genus is a birational invariant. Moreover, the closure
 $Z'$ in $\cX'$ of the locus of zeroes of $\omega$ on $\cX'_K$ is
 contained in $h^*Z$.
 Thus the  normal crossings property implies that $Z'$ cannot contain a connected component of the intersection of two
 irreducible components of $\cX'_k$. Hence, it follows from Theorems \ref{thm-KS} and
 \ref{thm-connected} that $\Sk(X,\omega)=\Sk(\cX'_K,h^*_K\omega)$ is connected.
\end{proof}

\section{Relation with the birational geometry of
varieties over a field of characteristic zero}\label{sec-birat}
 Our definition of the weight function
has a natural counterpart in birational geometry, that we will
 explain in this section. It is closely related to the thinness function constructed in
 \cite{BFJ0} and the log
discrepancy function in \cite{jonsson-mustata}.
\subsection{The weight function of a coherent ideal
sheaf}\label{ss-birat}
 \sss \label{sss-birat}
Let $F$ be a field of characteristic zero. Let $X$ be a connected
smooth $F$-variety and let $\mathcal{I}$ be a nonzero coherent ideal sheaf
on $X$. Let $v$ be a divisorial valuation on the function field of
$X$ such that $v$ has a center on $X$ and this center lies in the
zero locus $Z(\mathcal{I})$ of $\mathcal{I}$. Then one can find a
log resolution $h\colon Y\to X$ of $\mathcal{I}$ such that the closure
of the center of $v$ on $Y$ is a divisor $E$ on $Y$. Recall that
this means that $h$ is a proper birational morphism such that
$Z(\mathcal{I}\mathcal{O}_Y)+K_{Y/X}$ is a divisor with strict
normal crossings on $Y$. One can moreover arrange that $h$ is an
isomorphism over $X\smallsetminus Z(\mathcal{I})$, and we will always
assume that a log resolution satisfies this property. The
valuation $v$ is equal to $r\cdot \mathrm{ord}_E$, where
$\mathrm{ord}_E$ is the valuation associated to the divisor $E$
and $r$ is a positive real number. If we denote by $N$ the
multiplicity of $E$ in $Z(\mathcal{I}\mathcal{O}_Y)$ and by
$\mu-1$ the multiplicity of $E$ in
 $K_{Y/X}$, then the quotient $\mu/N$ is an interesting geometric
 invariant, which we call the weight of $v$ with respect to the
 variety $X$ and which we denote by $\weight_{\mathcal{I}}(v)$. Note that it does not depend on $r$. The set of the weights of all divisorial valuations
 $v$ with center in $Z(\mathcal{I})$ has a minimum, called the log canonical threshold of the pair
 $(X,\mathcal{I})$ and denoted by $\lct(X,\mathcal{I})$. This is a fundamental invariant
 in birational geometry; see \cite[\S8]{kollar}. It is well-known
 that the log canonical threshold can be computed on a single
 log resolution:
  if $Y\to X$ is any log resolution of $\mathcal{I}$, then writing $Z(\mathcal{I}\mathcal{O}_Y)=\sum_{i\in I}N_iE_i$ and $K_{Y/X}=\sum_{i\in I}(\mu_i-1)E_i$, one has
 $$\lct(X,\mathcal{I})=\min\{\frac{\mu_i}{N_i}\,|\,i\in I\}.$$
 This can be viewed as a
   partial analog of Theorem
 \ref{thm-KS}.

 \sss \label{sss-qmon} The weight function can be extended to
 quasi-monomial valuations on $X$ with center in $Z(\mathcal{I})$
 (see \cite[3.1]{jonsson-mustata} for the definition of a
 quasi-monomial valuation). If $v$ is such a quasi-monomial
 valuation, then we set
 $$\weight_{\mathcal{I}}(v)=\frac{v(K_{Y/X}+Z(\mathcal{I}\mathcal{O}_Y)_{\red})}{v(Z(\mathcal{I}\mathcal{O}_Y))}$$
 where $h\colon Y\to X$ is a log resolution of $\mathcal{I}$ such that
 $(Y,Z(\mathcal{I}\mathcal{O}_Y)_{\red})$ is adapted to $v$ in the sense of
 \cite[3.5]{jonsson-mustata} and where, for every effective divisor $D$ on $Y$, we
 write
 $$v(D)=\min\{v(f)\,|\,f\in \mathcal{O}(-D)_{\xi}\}$$ with $\xi$
 the center of $v$ on $Y$.  One can show that the definition of $\weight_{\mathcal{I}}(v)$ does not
 depend on the choice of $h$; see
 Section \ref{ss-comparJM}.



 \sss \label{sss-weightext2} For every $X$-scheme of finite type $Y$, we denote by $\widehat{Y}$ the formal
 $\mathcal{I}\mathcal{O}_Y$-adic completion of $Y$. In particular,
  $\widehat{X}$ is the formal $\mathcal{I}$-adic completion of
 $X$.
  We consider the generic fiber $\widehat{X}_\eta$ in the sense of
  \cite[1.7]{thuillier}.
 This is an analytic
space over the field $F$ endowed with its trivial absolute value.
It is obtained by removing from the usual generic fiber of
$\widehat{X}$ all the points that lie on the analytification of
the closed subscheme $Z(\mathcal{I})$ of $\widehat{X}$. It carries
a natural reduction map $$\red_{\widehat{X}}\colon \widehat{X}_\eta\to
Z(\mathcal{I}).$$ The set of quasi-monomial valuations $v$ on $X$
with center in $Z(\mathcal{I})$ can be embedded in
$\widehat{X}_\eta$ by sending $v$ to the absolute value $f\mapsto
\exp(-v(f))$ on the function field $F(X)$ of $X$. We will call the
points in the image quasi-monomial points.

\sss We can extend $\weight_{\mathcal{I}}$ to a function
$$\weight_{\mathcal{I}}\colon \widehat{X}_\eta\to \R\cup\{+\infty\}$$
in a similar way as in \eqref{sss-weightext}. Each log resolution
$Y\to X$ of $\mathcal{I}$ gives rise to a skeleton
$\Sk(\widehat{Y})\subset \widehat{X}_\eta$, consisting of the
quasi-monomial points $x$ in $\widehat{X}_\eta$ such that $Y$ is
adapted to the corresponding valuation $v_x$.
 The skeleton $\Sk(\widehat{Y})$ is homeomorphic to the product of $\R_{>0}$ with the
simplicial space associated to the strict normal crossings divisor
$Z(\mathcal{I}\mathcal{O}_Y)$, and there exists a natural
contraction $\rho_{Y}\colon \widehat{X}_\eta\to \Sk(\widehat{Y})$ that
can be extended to a strong deformation retraction of
$\widehat{X}_\eta$ onto $\Sk(\widehat{Y})$ \cite[3.27]{thuillier}.
 One can prove that
 \begin{equation}\label{eq-qmonineq}
 \weight_{\mathcal{I}}(x)\geq \weight_{\mathcal{I}}(\rho_Y(x))
 \end{equation}
 for every quasi-monomial point $x$ on $\widehat{X}_\eta$, with
 equality if and only if $x$ lies in $\Sk(\widehat{Y})$; see
 Section \ref{ss-comparJM}.
 We define the weight function on $\widehat{X}_\eta$ as
  $$\weight_{\mathcal{I}}\colon \widehat{X}_\eta\to \R\cup \{+\infty\},\,x\mapsto \weight_{\mathcal{I}}(x)=\sup_{h\colon Y\to X}\weight_{\mathcal{I}}(\rho_Y(x))$$
   where the
  supremum is taken over all log resolutions $h\colon Y\to X$ of
  $\mathcal{I}$. This weight function satisfies properties that
  are quite similar to the ones in Proposition
  \ref{prop-weightext}.

\subsection{Comparison with the log discrepancy
function}\label{ss-comparJM}
 \sss In \cite{jonsson-mustata}, Mattias Jonsson and the
 first-named author studied the properties of the so-called log discrepancy function $A_X$ on a
 certain space of valuations. This function had previously been
 introduced as the {\em thinness function} in a slightly different setting
 \cite{BFJ}.
 We will now explain the relation
 with the weight function defined above. As in
 \cite{jonsson-mustata}, we denote by $\mathrm{Val}_X$ the space of real valuations on the function field $F(X)$ of $X$ with
 a center on $X$. We can view $\mathrm{Val}_X$ as a subspace of the analytification
 $X^{\an}$ of $X$ with respect to the trivial absolute value on
 $F$ by associating the absolute value $f\mapsto \exp(-v(f))$ on
 $F(X)$ to each valuation $v$ in $\mathrm{Val}_X$.

 \sss For every log resolution $h\colon Y\to X$ of
 $\mathcal{I}$, the skeleton $\Sk(\widehat{Y})$ is contained in
 $\mathrm{Val}_X$.
 In
 \cite{jonsson-mustata}, the union of $\Sk(\widehat{Y})$ and the trivial valuation on $F(X)$ was denoted by
 $\mathrm{QM}(Y,Z(\mathcal{I}\mathcal{O}_Y))$. With our notation,
 the restriction of the log discrepancy function to
 $\Sk(\widehat{Y})$ is given by
 $$A_X\colon \Sk(\widehat{Y})\to \R,\,x\mapsto v_x(K_{Y/X}+Z(\mathcal{I}\mathcal{O}_Y)_{\red})$$
 where $v_x$ is the quasi-monomial valuation corresponding to $x$.
 It is proven in \cite[5.1]{jonsson-mustata} that $A_X(x)$ only
 depends on the point $x$, and not on the choice of the log
 resolution $Y\to X$ such that $\Sk(\widehat{Y})$ contains $x$, so
 that $A_X$ is well-defined on the set of quasi-monomial points in
 $\widehat{X}_\eta$.
 This implies the analogous claim for $\weight_{\mathcal{I}}$ in
 \eqref{sss-qmon}, because
 $v_x(Z(\mathcal{I}\mathcal{O}_Y))$ does not depend on the chosen
 log resolution. Likewise, the inequality \eqref{eq-qmonineq}
 follows from \cite[5.3]{jonsson-mustata}.

\sss In \cite[\S5.2]{jonsson-mustata}, the log discrepancy
function is extended to $\mathrm{Val}_X$ by means of a supremum
construction similar to the one we used in \eqref{sss-weightext2}.
It follows immediately from the definitions that we have
$$\weight_{\mathcal{I}}(x)=\frac{A_X(x)}{N_x(\mathcal{I})}$$
for every $x$ in $\mathrm{Val}_X\cap \widehat{X}_\eta$, where
$N_x(\mathcal{I})$ denotes the value
$v_x(Z(\mathcal{I}\mathcal{O}_Y))$
 for any log resolution $Y\to X$ of $\mathcal{I}$.

\subsection{Comparison with the weight function on
$K$-varieties}\label{ss-trick} \sss The weight function
$\weight_{\mathcal{I}}$ can also be compared to the one for
$K$-varieties and differential forms in \eqref{sss-weightext}, as
follows. We set $R=F\llbracket t\rrbracket$ and $K=F\llpar t\rrpar$. Let $X$ be a connected
smooth $F$-variety and let $\mathcal{I}$ be a coherent ideal sheaf
on $X$.
%
 We denote by $n+1$ the
dimension of $X$.
    Let $h:Y\to X$ be a log resolution of $\mathcal{I}$.
 We write $Z(\mathcal{I}\mathcal{O}_Y)=\sum_{i\in I}N_iE_i$ and
 $K_{Y/X}=\sum_{i\in I}(\mu_i-1)E_i$.
 For every point $\xi$ of $Z(\mathcal{I}\mathcal{O}_Y)$, we can find an open neighbourhood  $U$ of $\xi$ in $Y$ and a regular function $f$ on $U$ that
   generates
the ideal sheaf $\mathcal{I}\mathcal{O}_U$.  We view $U$ as a
$F[t]$-scheme by means of the morphism
 $$f\colon U\to \A^1_k=\Spec F[t]$$ and we denote by $\mathscr{U}$
 the $R$-scheme obtained by extension of scalars. Note that $\mathscr{U}$ is an $sncd$-model for its generic fiber $\mathscr{U}_K$. We choose a
 volume form $\phi$ on some open neighbourhood $V$ of $h(\xi)$ in
 $X$, that is, a nowhere vanishing differential form of
 degree $n+1$. Shrinking $U$, we may assume that $U\subset h^{-1}(V)$ and that $U\smallsetminus Z(f)$ is smooth over $V$.

 \sss The
 differential form $h^*\phi$ on $U$ induces a relative volume form
$\omega$ in $\Omega^n_{U/V}(U\smallsetminus Z(f))$, uniquely
characterized by the property that $\omega\wedge df=h^*\phi$ in
$\Omega^{n+1}_{U/V}(U\smallsetminus Z(f))$. The form $\omega$ is
 called the Gelfand-Leray form
 associated to $f$ and $h^*\phi$; see for instance \cite[9.5]{NiSe}.
 It induces a volume form on $\cU_K$ that we denote again by
 $\omega$.
 By the adjunction formula \eqref{eq-adj}, there exists a unique
 isomorphism  $$\varphi\colon \omega_{U/F[t]}\to
 \omega_{U/F}=\Omega^{n+1}_{U/F}$$ of line bundles on $U$ such that
$\varphi(\theta)=\theta\wedge df$ for every open subset $W$ of
$U\smallsetminus Z(f)$ and every $\theta\in
\omega_{U/V}(W)=\Omega^{n}_{U/V}(W)$. Since canonical sheaves are
compatible with the flat base change $F[t]\to R$, it follows that
\begin{equation}\label{eq-GL}
\mathrm{div}_{\mathscr{U}}(\omega)=\sum_{i\in
 I}(\mu_i-1)(E_i\cap U).\end{equation}

\sss  The $K$-analytic space
 $\widehat{\mathscr{U}}_\eta$ can be identified with the subspace
 of $$\widehat{U}_\eta:=\red^{-1}_{\widehat{Y}}(U\cap Z(\mathcal{I}\mathcal{O}_Y))\subset \widehat{X}_\eta$$ consisting of the points $x$ where
 $|f(x)|=|t|_K=1/e$, by \cite[4.2]{ni-sing}. This embedding has a retraction
$$r_{\mathscr{U}}\colon \widehat{U}_\eta\to \widehat{\mathscr{U}}_\eta,\, x\mapsto
r_{\mathscr{U}}(x)$$
 where $r_{\mathscr{U}}(x)$ is the unique point of
 $\widehat{\mathscr{U}}_\eta$ such that
 $\red_{\widehat{\mathscr{U}}}(r_{\mathscr{U}}(x))=\red_{\widehat{Y}}(x)$ and
 $$|g(r_{\mathscr{U}}(x))|=|g(x)|^{-1/\ln |f(x)|}$$ for all $g$ in
 $\mathcal{O}_{Y,\red_{\widehat{Y}}(x)}$.
  One can easily deduce from
 \eqref{eq-GL} that the restriction of $\weight_{\mathcal{I}}$ to $\widehat{U}_\eta$
 coincides  with $\weight_{\omega} \circ r_{\mathscr{U}}$, where $\weight_{\omega}$ is the weight function
     associated to $(\cU_K,\omega)$.


\subsection{An analog of the Kontsevich-Soibelman
skeleton}\label{subsec-KSsk-birat}
 \sss In analogy with the
definition of $\omega$-essential divisorial points in
\eqref{sss-KSsk}, we call a divisorial valuation $v$ as in
\eqref{sss-birat} $\mathcal{I}$-essential if
$$\weight_{\mathcal{I}}(v)=\lct(X,\mathcal{I}),$$
that is, if the divisor $E$ computes the log canonical threshold.
In that case, we also say that the divisor $E$ is
$\mathcal{I}$-essential.
 We define the skeleton $\Sk(X,\mathcal{I})$ of the pair
$(X,\mathcal{I})$ as the closure in $\widehat{X}_\eta$ of the set
of $\mathcal{I}$-essential divisorial valuations.  It is
well-known that, when $h\colon Y\to X$ is a log resolution of
 $\mathcal{I}$, the skeleton $\Sk(X,\mathcal{I})$ is the union of
 the open faces of $\Sk(\widehat{Y})$ corresponding to connected components of
 intersections of $\mathcal{I}$-essential prime components of
 $\mathcal{I}\mathcal{O}_Y$. This is the analog of Theorem
 \ref{thm-KS}. In particular, the subspace $\Sk(X,\mathcal{I})$ of
 $\Sk(\widehat{Y})\subset \widehat{X}_\eta$ is independent of the log resolution
 $h$. One can endow it with a natural piecewise affine structure
 as in Section \ref{ss-pwaff}.

 \sss \label{sss-shoko} The Shokurov-Koll\'ar Connectedness Theorem \cite[17.4]{kollar} implies that for every
 point $x$ in $Z(\mathcal{I})$ and every sufficiently small open
 neighbourhood $U$ of $x$ in $Z(\mathcal{I})$, the inverse image
 of $U$ under
 the reduction map
 $$\red_{\widehat{X}}\colon \Sk(X,\mathcal{I})\to Z(\mathcal{I})$$ is connected.
 Thus our Connectedness  Theorem for skeleta of $K$-varieties of geometric genus one
 (Corollary \ref{cor-connected}) can be viewed as an analog of the Shokurov-Koll\'ar
Connectedness
 Theorem.

\end{document}